\documentclass[12pt,twoside, english]{article}
\usepackage{psfrag,babel}
\usepackage{latexsym}
\usepackage{amsmath,amsthm}
\usepackage{amsfonts}
\usepackage{amssymb}
\usepackage{graphicx}
\usepackage{color}
\usepackage{epsfig}

\usepackage[colorlinks=true]{hyperref}

\hypersetup{urlcolor=blue, citecolor=red}

\newtheorem{theorem}{Theorem}[section]
\newtheorem{corollary}[theorem]{Corollary}
\newtheorem{lemma}[theorem]{Lemma}
\newtheorem{proposition}[theorem]{Proposition}

\newtheorem{definition}[theorem]{Definition}

\newtheorem{maintheorem}{Theorem}

\DeclareMathOperator{\dist}{dist}

\begin{document}

%%%%%%%%%%%%%%%%%%%%%%%%%%%%%%%%%%%%%%%%%%%%%%%%%%%%%%%%
% Title and author(s)'s name(s)
%%%%%%%%%%%%%%%%%%%%%%%%%%%%%%%%%%%%%%%%%%%%%%%%%%%%%%%%
\title{Entropy formula for $C^1$ expanding maps}
\author
 {Eleonora Catsigeras\thanks{Instituto de Matem\'{a}tica y Estad\'{\i}stica \lq\lq Prof. Rafael Laguardia", Universidad de la Rep\'{u}blica, Uruguay.  Postal Address: Av. Herrera y Reissig 565, C.P.11300 Montevideo, Uruguay. E-mail: eleonora@fing.edu.uy}  \, and Fernando Valenzuela G\'{o}mez\thanks{E-mail: fagomez7@hotmail.com}
 }

\date{\small April 16, 2026}
\maketitle

\begin{abstract} 
We prove that the (necessarily existing) pseudo-physical or SRB-like measures of $C^1$  expanding dynamical systems on a compact Riemannian manifold satisfy Pesin's entropy formula.  We include examples of $C^1\setminus C^{1+\alpha}$ expanding maps on the circle and on the 2 torus and study their pseudo-physical measures.  
\end{abstract}

\noindent {\em MSC 2020: } Primary 37D20; Secondary 37D35, 37A35, 37A60.

\noindent {\em Keywords: }Equilibrium states, pseudo-pysical/SRB-like measures, Pesin's entropy formula, Expanding maps.

\section{Introduction}

We consider the dynamical system by iteration of a map $f:M \to M$ of $C^1$ class on a compact Riemannian manifold $M$ of finite dimension. The search for \lq\lq natural\rq\rq \ invariant measures that describe the statistical behavior of an observable set of orbits of the system is a key point in the ergodic theory. Usually, the concept of observability of a set of orbits is associated to its volume, namely, a set of orbits is observable if it has positive Lebesgue measure. This is particularly restrictive if the map is not Lebesgue preserving.  The ergodicity of an invariant measure, if the system is not volume preserving, does not ensure that the measure is \lq\lq natural\rq\rq. In fact, an ergodic measure $\mu$ describes the statistical behavior of $\mu$-almost all the orbits, but if $\mu$ is mutually singular with the Lebesgue measure, the set of such orbits may zero volume.

\subsection{Physical and pseudo-physical measures.}

We denote by $\mathcal M$ the set of Borel probability measures on $M$, endowed with the weak$^*$- topology (see for instance \cite{Viana-Oliveira-Book}).  The weak$^*$ topology in $\mathcal M$ is defined by the following equality:
$$ {\lim_{n \rightarrow + \infty}}^* \  \mu_n = \mu   \mbox{ for } \mu_n, \mu \in {\mathcal M} \ \mbox{ if and only if } $$
$$   \lim_{n \rightarrow + \infty}\int \phi \, d \mu_n =\int \phi \, d \mu   \mbox{ for all the continuous functions } \phi: M \to \mathbb{R}. $$

We denote by ${\mathcal M}_f \subset {\mathcal M}$ the subset of  measures $\mu $ that are invariant by $f$, i.e. $f^* \mu= \mu$, where $f^*:{\mathcal M} \to {\mathcal M}$ is the pull-back of $f$, defined by $f^*\mu(B)= \mu(f^{-1} (B)) $ for any Borel-measurable set $B$.

In \cite{misiurewiczNaturalMeasures} a measure $\mu \in \mathcal M_f$ is called \em natural \em if it satisfies
$$\mu = {\lim_{n\rightarrow +\infty}}^* \frac{1}{n} \sum_{j=0}^{n-1} {(f^*)}^j \nu ,  $$
for some (not necessarily invariant) Borel probability measure $\nu$ that is absolutely continuous with respect to the Lebesgue measure. The problem in this definition of natural measures, is that they do not necessarily exist.

One of the most used concept of relevance of an invariant measure, from the statistical viewpoint for a positive volume set of orbits,   is the property of being \lq\lq physical\rq\rq,  that we will define below:

\begin{definition} {\bf (Empiric probabilities and basin of statistical attraction.)}
	\label{DefinitionStatisticalBasin} 
	
	\em
	For any initial point $x \in M$ the \em empiric probability measure \em $\sigma_n(x) \in {\mathcal M}$ up to time $n$ of the orbit of $x$ is 
	$$ \sigma_n(x) := \frac{1}{n} \sum_{j=0}^{n-1} \delta_{f^j(x)},   $$
	where $\delta_y$ denotes the Dirac delta probability measure supported on $y$.  In other words, the empiric probability is equally supported on the points of the finite piece of orbit from $x$ up to  $f^{n-1}(x)$.
	
	For any point $x \in M$ the sequence  $\{\sigma_n(x)\}_{n\geq1} \subset {\mathcal M}$ of empiric probability has convergent subsequences, because ${\mathcal M}$ is a compact space with the weak$^*$-topology.
	We call the set of probabilities measures that are the limits$^*$ of the convergent subsequences of $\{\sigma_n(x)\}_{n\geq1} $, the \em $p$-omega limit \em of the orbit of $x$ (i.e. the omega limit in the space of probabilities), and denote it by   $p\omega(x) $. Precisely,
 	$$ p\omega(x) = \{\nu \in \mathcal M: \nu = {\lim}^*_{j \rightarrow +\infty} \sigma_{n_j}(x) \ $$ $$ \mbox {for some convergent subsequence }  \{\sigma_{n_j}(x)\}_{j\geq 1} \}.  $$
	
	For any invariant measure $\mu$, the  \em  basin $B(\mu)$ of statistical attraction \em of $\mu$ is the set
	$$B(\mu) : = \{x \in M:  \mu = {\lim}^*_{n \rightarrow +\infty} \sigma_{n}(x) \} = \{ x \in M: p\omega(x) = \{\mu\}\}.$$
\end{definition}

\begin{definition} {\bf (Physical measures.)}
	\label{DefinitionPhysicalMeasure} 	\em 
	We call an invariant probability measure $\mu$ \em physical \em if its basin of statistical attraction has positive Lebesgue measure, i.e. 
	$$m(B({\mu})) > 0,$$
	where $m$ denotes the Lebesgue measure.

\end{definition}

The physical measures are also called Sinai-Ruelle-Bowen (SRB) measures, due to the early works in the decade of 1970's of Ya. Sinai \cite{Sinai-1972}, D. Ruelle  and R. Bowen \cite{Bowen-Ruella-1975}, \cite{Bowen1975LectNotes}, \cite{Ruelle1976}, introducing the physical measures for smooth dynamical systems with uniform hyperbolicity. 

When working in the $C^1$ topology, we prefer to call them physical measures, instead of SRB-measures, to avoid confusions:  In fact, there is abundant literature studying the physical or SRB-measure for smooth systems or even $C^{1 + \alpha}$ systems with $\alpha >0$ (see Definition \ref{DefinitionC1+alpha}). With such regularity, relevant properties appear:  the conditional measures of the physical probabilities along the unstable submanifolds are absolutely continuous with respect to the Lebesgue measures of these submanifolds \cite{pesinTeoria} \cite{ledrappieryoung}. In the literature, these properties are required or proved, before calling the invariant measures SRB \cite{Young-WhatAreSRB}. But in our context, where regularity is only  $C^1$ (but not necessarily $C^{1 + \alpha}$), the existence of unstable submanifolds fails \cite{Pugh-1984}. Besides, also in the particular cases for which the unstable manifolds exist, the properties of absolute continuity do not hold \cite{avilabochi}.  

One of the most relevant problems in the ergodic theory, is to prove the existence of physical measures, since a priori the sequence of empiric probabilities may be non convergent for a set of orbits with positive volume.  The existence of physical measures, is mainly obtained in a scenario of some kind of uniform or non-uniform hyperbolicity or, at least, domination of the expanding directions. The existence of physical measures was proved for $C^1$-generic expanding map of the circle in \cite{Campbell-Quas}, for $C^1$ generic diffeomorphisms having an hyperbolic attractor in \cite{qiuhao}, and for $C^{1+\alpha}$ diffeomorphisms with dominated splitting in \cite{araujotahzibi2008}.  More recently, a characterization in the $C^{1 + \alpha}$ scenario of the existence of SRB measures on surfaces was given in \cite{Climenhaga-Luzzatto-Pesin-2022}.

Besides the existence, the problem of uniqueness or, at least finitude, of the physical measures is the object of research mainly for partially hyperbolic systems.  In \cite{Obata-2023}  it is proved the existence and uniqueness of SRB measures for certain class of $C^2$ partially hyperbolic systems.     In  \cite{Alves-Dias-Luzzatto-Pinheiro-2017} the authors prove the existence of at most a finite number of SRB-measures for a class of $C^{1+\alpha}$ partially
hyperbolic dynamical systems. 

As said above, the existence of physical or SRB measures was mainly proved for systems that are $C^{1 + \alpha}$ regular (and with some kind of hyperbolicity or expanding properties), except some few articles that explore their existence in the $C^1$ topology.  In an intermediate situation, in \cite{Boukhecham-2024} the author finds SRB measures for hyperbolic systems that are more regular than $C^1$ but with weaker regularity than $C^{1+ \alpha}$.

To overcome the problem of nonexistence of physical measures, a generalization of such measures,  was introduced in 
\cite{CatEnr-polaca}: the concept of pseudo-physical  or SRB-like measure, which we define in the following paragraphs. 

Recall Definition \ref{DefinitionStatisticalBasin} of the $p\omega$-limit set of an orbit in the space of probabilities  and of basin of statistical attraction of an invariant measure.

\begin{definition} {\bf (Epsilon-weak basin of statistical attraction.) }
	\label{definitionEpsilonWeakBasin} \em
	Choose a distance $\dist^*$ in $\mathcal M$ that endows the weak$^*$ topology.
	
	 For any $f$-invariant probability measure $\mu$, and for $\epsilon >0$, we call the following set  $B_{\epsilon} (\mu) \subset M$ the \em 
 $\epsilon$-weak basin of statistical attraction of $\mu$: 
	$$ B_{\epsilon} (\mu) := \{ x \in M:  \dist^*(p\omega(x), \mu) < \epsilon   \}.  $$

\end{definition}

We note that the basin of statistical attraction defined in \ref{DefinitionStatisticalBasin}, may not coincide with the zero-weak basin of statistical attraction. In fact, the weak$^*$-distance between $p\omega(x)$ and $\mu$ may be zero, but the sequence of empiric probabilities may not converge, and have convergent subsequences whose limits are different from $\mu$. 

\begin{definition} {\bf (Pseudo-physical measures.)}
	\label{DefinitionPseudoPhysical}
	\em 
	
	We call an invariant probability measure $\mu$  \em pseudo-physical or SRB-like, \em if its $\epsilon$-weak basin of statistical attraction has positive Lebesgue measure for all $\epsilon >0$. In brief
	$$ m(B_{\epsilon} (\mu)) >0 \ \ \mbox {for all }  \epsilon >0.  $$
\end{definition}
The  following  properties were proved in \cite{CatEnr-polaca}:

The pseudo-physical measures do always exist for any continuous $f$, and do not depend of the chosen $\dist^*$ in the space of probability measures (provided it endows the weak$^*$- topology). Besides for Lebesgue-almost all the orbits, any convergent subsequence of empiric probabilities converges to a pseudo-physical measure. In other words, the set of all the pseudo-physical measures completely describes the observable statistical behavior of the system if the criteria of observability is that of the orbits with positive volume.  Any physical measure is pseudo-physical, so pseudo-physicality is a generalization of physicality. If the set of all the pseudo-physical measures is finite, then all the pseudo-physical measures are physical. 

In Section \ref{Section5-Examples} we present examples of $C^1$-expanding maps (see Definition \ref{DefinitionExpandingMaps}) on the circle and on the torus, that are not $ C^{1+\alpha}$ for any $\alpha >0$, and study their pseudo-physical measures. 

\subsection{Equilibrium states and Pesin's entropy formula. }

Other important definition in  the study of  statistical properties of a dynamical system, coming from the statistical mechanics, is the concept of equilibrium states of a variational principle (see for instance \cite{Bowen1975LectNotes}, \cite{kellerBook}), and in particular the set of measures that satisfy Pesin's entropy formula.        We will state the definitions of those concepts in the following paragraphs.

To define  the equilibrium states we will use the \em metric entropy \em of the map $f $ with respect to an $f$-invariant probability measure $\mu$. For a definition of the metric entropy, see Section \ref{SectionEntropy} of this article. For a more detailed exposition of the properties of the entropy see also for instance the book  \cite{Viana-Oliveira-Book}.

\begin{definition} {\bf (Equilibrium States.) }
	\label{DefinitionEquilibriumStates} \em 
	
	Let $\psi: M \mapsto \mathbb{R}$ be a continuous function called \em potential. \em
	
	We call the following supremum $P(f, \psi)$ the \em pressure \em of $f$ with respect to the potential $\psi$:
	$$ P(f, \psi) := \sup_{\nu \in \mathcal{M}_f} \left\{ h_{\nu}(f) + \int \psi \, d \nu\right\},$$
	where  $h_{\nu}(f)  $ is the metric entropy of $f$ with respect to the $f$-invariant measure $\nu$.
	
	An $f$-invariant measure $\mu$ is \em an equilibrium state \em of $f$ with respect to the potential $\psi$ if
	$$  h_{\mu}(f) + \int \psi \, d \mu = P(f, \psi).$$
		
\end{definition}
In the decade of 1970', Sinai, Ruelle and Bowen proved important relations between the equilibrium states and the physical measures for smooth hyperbolic systems \cite{Sinai-1972}, \cite{Bowen-Ruella-1975}, \cite{Bowen1975LectNotes}, \cite{Ruelle1976}.  
More recently, \cite{Alvarez-Cantarino} proves the existence  of equilibrium states for certain type of partially hyperbolic endomorphisms of $C^1$ class, and for certain type of potentials. In \cite{Hemenway-2024}, the existence and uniqueness of  equilibrium states  are proved  for non-uniformly expanding skew products and for Hölder continuous potentials.  Also the uniqueness of equilibrium states, besides their existence, is proved in \cite{Pacifico-Yang-Yang-2022} for a class of flows satisfying a version of the specification property among other conditions. In \cite{Alves-Oliveira-Santana-2024} the existence of finitely meany ergodic equilibrium states for a type of non-uniformly expanding maps, with respect to Hölder continuous potentials.

The equilibrium states are mainly applied when the potential is related with the positive Liapunov exponents which translate to the tangent space the chaotic behavior of the dynamics by iterations of $f$. 
 
Oseledet's Theorem (see for instance \cite{Barreira-Pesin-Book}) states that for any $f$-invariant measure $\mu$, at almost all the points $x$ with respect to $\mu$ there exists a  splitting of the tangent space
$$ T_x M = \oplus_{i=1}^{k(x)} E_x^i  $$
into $Df$-invariant measurable subspaces $E_x^i$ along which the Liapunov exponents exist according with the following definition:

\begin{definition} {\bf (Liapunov exponents.)}
	\label{DefinitionExpLiapunov} \em 
The  \em  Liapunov exponent $\chi_i(x)$ \em of $f$ at the point $x \in M$ along the measurable $Df$-invariant  tangent subspace $E_x^i$ is:
$$\chi_i(x):= \lim_{n \rightarrow \pm \infty} \frac{\|Df^n_x v\|}{n} \ \ {\mbox{ for all }}  0 \neq v \in E_x^i.    $$
The Liapunov exponents are the exponential rate of increasing (if positive) or decreasing (if negative) of the vectors of the tangent space, when iterating $Df: TM \mapsto TM$.
\end{definition}

We denote by $$\sum_{i=1}^{m(x)} \chi_i^+(x)$$ the sum of the Liapunov exponents that are strictly larger than zero at the point $x$, counting each one as many times as its multiplicity. If all the Liapunov exponents are smaller or equal than zero, that sum is null.
The following Theorem is due to Margulis \cite{Margulis} and Ruelle \cite{ruelleIneq}, and states an upper bound for the metric entropy, related with the positive Liapunov exponents:

\begin{theorem} {\bf (Margulis-Ruelle inequality)}
	\label{TheoremRuelleIneq} 
	
	For any $f$-invariant measure $\mu$,
	$$   h_{\mu}(f) \leq  \int \sum_{i=1}^{m(x)} \chi_i^+(x) \, d \mu$$
\end{theorem}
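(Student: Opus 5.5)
The plan follows Ruelle's original argument for $C^1$ maps on a compact manifold, combined with the ergodic decomposition. First I reduce to the ergodic case. Both sides of the inequality are affine in $\mu$: the entropy $h_\mu(f)$ is affine and behaves well under ergodic decomposition (Jacobs' theorem), and the right-hand side is the integral of a measurable function that is bounded above, since $\chi_i^+\le \log\max\|Df\|$. So it suffices to treat ergodic $\mu$. In that case $\sum_i\chi_i^+$ is $\mu$-a.e. constant. By Oseledets' theorem it equals
$$\lim_{n\to\infty}\frac1n\log\max_{k}\|\Lambda^k Df^n_x\|\quad \mu\text{-a.e.},$$
where $\Lambda^k$ is the $k$-th exterior power. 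Equivalently, it is the limit of $\frac1n\log$ of the product of the singular values of $Df^n_x$ that exceed $1$.

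Next I fix $n$ and compare $h_\mu(f^n)=n\,h_\mu(f)$ with $\int \log^+\!\max_k\|\Lambda^k Df^n_x\|\,d\mu$. Choose a fine partition $\mathcal P_r$ of $M$ into sets of diameter $\le r$, each comparable to a cube of side $r$, with $\mu(\partial\mathcal P_r)=0$. Then $h_\mu(f^n)\le h_\mu(f^n,\mathcal P_r)+o(1)$ as $r\to0$, and
$$h_\mu(f^n,\mathcal P_r)\le H_\mu(f^{-n}\mathcal P_r\mid\mathcal P_r)\le \int \log N(x)\,d\mu,$$
where $N(x)$ is the number of atoms of $\mathcal P_r$ that meet $f^n(P(x))$, and $P(x)$ is the atom containing $x$.

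The key geometric estimate uses uniform continuity of $Df^n$ on compact $M$, which is the only place $C^1$ regularity enters. For $r$ small, $f^n(P(x))$ lies in an $\epsilon r$-neighbourhood of the image of an $r$-cube under the linear map $Df^n_x$, read in exponential charts. That neighbourhood of an ellipsoid with semi-axes $r\sigma_1,\dots,r\sigma_d$ meets at most
$$C\prod_i\max(\sigma_i,1)\le C'\max_k\|\Lambda^k Df^n_x\|$$
cubes of side $r$, where $C,C'$ depend only on $d$ and $M$, not on $n$ or $r$. Hence
$$n\,h_\mu(f)\le \log C'+\int\log\max_k\|\Lambda^kDf^n_x\|\,d\mu.$$
Dividing by $n$, letting $n\to\infty$, and using subadditivity of $\log\|\Lambda^k Df^n\|$ together with Kingman's theorem, the integral term converges to $\int\sum\chi_i^+\,d\mu$, which gives the inequality.

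The main obstacle is this covering estimate. The constant must be uniform in $n$ (it may depend on $n$ only through the choice of $r$), and the $C^1$ nonlinearity must be controlled only by uniform continuity of $Df^n$, with no Hölder modulus available. I would handle it by fixing $n$ first and then choosing $r=r(n)$ so small that the deviation of $f^n$ from $Df^n_x$ on $r$-balls is below $\epsilon r$. Since $\epsilon$ can be taken at most $1$, this costs only a bounded factor $2^d$ in the count of cubes.
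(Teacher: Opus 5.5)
The paper gives no proof of this inequality. It quotes it from Margulis and Ruelle and refers to the book of Viana and Oliveira, so there is no internal argument to compare with. Your proposal is a correct sketch of Ruelle's standard covering argument, and each step goes through:
\[ n\,h_\mu(f)=h_\mu(f^n)=\lim_{r\to0}h_\mu(f^n,\mathcal P_r), \qquad h_\mu(f^n,\mathcal P_r)\le H_\mu(f^{-n}\mathcal P_r\mid\mathcal P_r)\le\int\log N\,d\mu . \]
The linearization of $f^n$ on atoms of diameter $r\le r(n)$ by uniform continuity of $Df^n$ also holds. So does the volume count with constants independent of $n$ and $r$.

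One small fix: let $k$ range over $0,\dots,d$ with $\Lambda^0=1$, or equivalently use $\log^+$ throughout. With $k\ge1$ only, the bound $C\prod_i\max(\sigma_i,1)\le C'\max_k\|\Lambda^kDf^n_x\|$ fails when all $\sigma_i<1$.

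Two ingredients you included are never used.
\begin{enumerate}
\item The reduction to ergodic $\mu$. The convergence of $\frac1n\int\log\max_k\|\Lambda^kDf^n\|\,d\mu$ to $\int\sum_i\chi_i^+\,d\mu$ holds for every invariant $\mu$. You can get it from Kingman's theorem, or from dominated convergence, since these functions take values in $[0,\,d\log^+\max_x\|Df_x\|]$.
\item The condition $\mu(\partial\mathcal P_r)=0$. The convergence $h_\mu(f^n,\mathcal P_r)\to h_\mu(f^n)$ holds for any partitions whose diameters tend to $0$.
\end{enumerate}

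Compared with the paper's bare citation, your argument makes this step self-contained. It also simplifies a lot in the setting the paper actually needs. For a $C^1$ expanding map, all singular values of $Df^n_x$ are at least $\lambda^n>1$, so $\prod_i\max(\sigma_i,1)=|\det Df^n_x|$. Your covering estimate, together with the chain rule and the invariance of $\mu$, then gives
\[ n\,h_\mu(f)\le\log C'+\int\log|\det Df^n|\,d\mu=\log C'+n\int\log|\det Df|\,d\mu . \]
Dividing by $n$ and letting $n\to\infty$ yields $h_\mu(f)\le\int\log|\det Df|\,d\mu$. This is exactly the form used in Proposition \ref{PropositionPesinForm=EqState} and in inequality (\ref{eqn29}). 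It bypasses Oseledets' theorem, exterior powers, Kingman's theorem and the Liouville formula (Corollary \ref{CorollaryLiouvilleFormulaForExpandingMaps}) entirely. Your general Oseledets/Kingman step is needed only for the statement as written, for arbitrary $C^1$ maps.
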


For a proof, see for instance \cite{Viana-Oliveira-Book}.

 \begin{definition} {\bf (Pesin's  entropy formula)}
 	\label{DefinitionPesinFormula} \em 
 	An $f$-invariant measure $\mu$ satisfies  \em Pesin's entropy formula \em if
 		$$   h_{\mu}(f) =  \int \sum_{i=1}^{m(x)} \chi_i^+(x) \, d \mu$$
 \end{definition}
Measures satisfying Pesin's entropy formula may not exist. But if someone exists, its metric entropy is the maximum possible with respect to the chaotic behavior of $f$ that is expressed by the positive Liapunov exponents.

When the system has a continuous $Df$-invariant unstable sub-bundle $U \subset TM$, the integral of the sum of the positive Liapunov exponents equal the integral of 
$$\phi := \log |\det Df|_U|.$$  If this latter function is continuous, its opposite $\psi = -\phi$  can be used as  the potential to study the equilibrium states. In this case the pressure $P(f, \psi) \leq 0$, due to Margulis-Ruelle inequality. So the measures satisfying Pesin's entropy formula, if someone exists, are the equilibrium states of $f$ with respect to the potential $\psi$, and the pressure is zero.

Ya. B. Pesin \cite{pesinTeoria} early initiated the so called Pesin's Theory, proving important relations between the Liapunov exponents and the existence of measures satisfying Pesin's entropy formula for some smooth systems, is a scenario for which there exists invariant measures that have properties of absolute continuity with respect to the Lebesgue measure along the unstable submanifolds.

Later, Ledrappier and Young \cite{ledrappieryoung} proved that the condition of absolute continuity  used in Pesin's Theory is indeed a characterization of the measures (if they exist)
that satisfy Pesin's entropy formula, provided the system is of  $C^{1+\alpha} \, (\alpha >0)$ class. This characterization is relevant: it is the key point  in the later research proving the existence of measures satisfying Pesin's entropy formula.
For instance  in \cite{araujotahzibi2008} the existence of a SRB measure that satisfies Pesin's entropy formula is proved for $C^{1+\alpha}$ diffeomorphisms with dominated splitting. In \cite{Alves-Mesquita-2023} it is proved the existence and uniqueness of SRB measure satisfying Pesin'  entropy formula for Gibbs-Markov induced maps, that translate to a piecewise    $C^{1+ \alpha}$ dynamics.

The characterization of Ledrappier and Young of measures satisfying Pesin's entropy formula via the properties of absolute continuity with respect to the Lebesgue measure along the unstable submanifolds, do not hold for  $C ^1$ systems if they are not $C^{1+ \alpha}$. In fact, generic $C^1$ systems do not have measures with that property of absolute continuity \cite{Pugh-1984},   \cite{avilabochi}.  Nevertheless, under some kind  of hyperbolicity, $C^1$ systems still have measures satisfying Pesin's entropy formula: In \cite{tahzibiC1GenericPesinFormula}, A. Tahzibi proved that generic $C^1$ systems of dimension two have an invariant measure satisfying Pesin's entropy formula. Later, in \cite{suntianDominatedSplittingPesinFormula}, Sun and Tian extended  Tahzibi's result to $C^1$- generic volume-preserving diffeomorphisms in any dimension with a dominated splitting. In \cite{CatCerEnr-DomSplit} it is proved that the necessarily existing pseudo-physical measures satisfy Pesin's entropy formula for all the $C^1$ systems with dominated splitting in any dimension. In \cite{CatEnr-Portugaliae} it was proved the same result but for $C^1$ expanding maps in dimension one. And in \cite{Araujo-Santos-2019} it is proved the result for $C^1$ nonuniform expanding maps in any dimension.

\subsection{Statement of the result for expanding maps.}

\begin{definition} {\bf (Expanding maps.)} 
	\label{DefinitionExpandingMaps} \em 
	
	The $C^1$ map $f: M \to M$ is (uniformly) \em expanding \em if there exists a constant $\lambda >1$ such that
$$ \|Df_x(v)\| \geq \lambda \|v\| \ \ \forall \ (x,v) \in TM.$$	
\end{definition}

Recall Definition \ref{DefinitionExpLiapunov} of the Liapunov exponents. Since for an expanding map, the norm of all the vectors in the tangent space grow more than $\lambda >1$ at each iterate, the exponential rate of growing for the vectors of any direction, is larger than $\log \lambda >0$. So all the Liapunov exponents are positive.

\begin{theorem} {\bf (Liouville formula)}
\label{Theorem LiouvilleForm}	
For any $C^1$ map $f$ and for any $f$-invariant measure $\mu$ 
$$\int \sum_{i=1}^{k(x)} \chi_i (x) \, d \mu = \int \log |\det (Df)| \, d \mu,    $$ 
where  $ \sum_{i=1}^{k(x)} \chi_i (x)  $ is the sum of all the Liapunov exponents at the point $x$, counting each one as many times as its multiplicity. 
\end{theorem}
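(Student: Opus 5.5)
The plan is to reduce the statement to Birkhoff's ergodic theorem applied to the continuous cocycle $\log|\det Df|$. We combine the multiplicativity of determinants along orbits with the fact that the Oseledets splitting has subexponential angles. Implicitly we assume $\det Df_x \neq 0$ $\mu$-a.e.; otherwise both sides are understood with the convention that they equal $-\infty$. This holds automatically in the expanding case of Definition \ref{DefinitionExpandingMaps}, where $|\det Df_x|\geq \lambda^{\dim M}$.

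\textbf{Step 1: the chain rule for determinants.} By the chain rule, $Df^n_x = Df_{f^{n-1}(x)}\circ\cdots\circ Df_x$, so
$$\frac1n\log|\det Df^n_x| = \frac1n\sum_{j=0}^{n-1}\log|\det Df_{f^j(x)}|.$$
The function $\log|\det Df|$ is continuous wherever $\det Df\neq 0$, hence bounded above on compact $M$. Birkhoff's theorem therefore applies to it, in the quasi-integrable sense if it is unbounded below. It gives a $\mu$-a.e. limit $\tilde\phi(x)$ with $\int\tilde\phi\,d\mu=\int\log|\det Df|\,d\mu$.

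\textbf{Step 2: identifying the limit with the sum of exponents.} It remains to show that $\tilde\phi(x)=\sum_{i=1}^{k(x)}\chi_i(x)$ at Oseledets-regular points. Choose bases $v^i_1,\dots,v^i_{d_i}$ of each $E^i_x$. Then $|\det Df^n_x|$ equals the volume of the parallelepiped spanned by the images $Df^n_x v^i_l$, divided by the volume spanned by the $v^i_l$. This volume is at most $\prod_{i,l}\|Df^n_x v^i_l\|$, and at least that product times a factor controlled by the angles between the subspaces $Df^n_x E^i_x=E^i_{f^n x}$. The part of Oseledets' theorem stated in the paper (Definition \ref{DefinitionExpLiapunov}) gives only the growth of individual vectors. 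We additionally need the standard conclusion that the angles between the Oseledets subspaces, and the volume distortion inside each $E^i$, are subexponential along the orbit: $\lim\frac1n\log|\det Df^n_x|_{E^i_x}|=d_i\chi_i(x)$ and $\lim\frac1n\log\angle(\cdot)=0$. Taking $\frac1n\log$ then yields $\tilde\phi(x)=\sum_i d_i\chi_i(x)$, counting multiplicity.

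\textbf{Step 3: conclusion.} Integrating the identity of Step 2 against $\mu$ and invoking Step 1 finishes the argument. The main obstacle is Step 2. Knowing only the exponent of each vector does not control the volume, since a product of norms can exceed the volume exponentially if the subspaces collapse together. The fix I would try first is to cite the full multiplicative ergodic theorem: $\lim_n\frac1n\log|\det Df^n_x|$ equals the sum of exponents with multiplicity, as part of its conclusion on the exterior powers $\Lambda^{\dim M}Df$. Alternatively, one can note that $\chi$ on $\Lambda^{\dim M}$ is computed by the subadditive (here actually additive) Kingman limit, which equals the top exponent of $\Lambda^{\dim M}Df$, namely $\sum_i d_i\chi_i$.
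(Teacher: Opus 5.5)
Your proof is correct, and it is the standard argument. The paper does not actually prove this statement. It only cites \cite{Teschl-Book}, whose Liouville formula is the ODE identity $\det\Phi(t)=\det\Phi(t_0)\exp\int_{t_0}^{t}\mbox{tr}\,A(s)\,ds$ for fundamental matrices. Its discrete-time counterpart is exactly your Step 1, the multiplicativity $\det Df^n_x=\prod_{j=0}^{n-1}\det Df_{f^j(x)}$. Beyond the bare citation, your write-up supplies two ingredients the paper leaves implicit:
\begin{itemize}
\item Birkhoff's theorem, which turns the pointwise identity into an identity of integrals for a possibly non-ergodic $\mu$. Your quasi-integrable version also covers the case where $\det Df$ vanishes or $\log|\det Df|$ is not integrable.
\item The part of the multiplicative ergodic theorem identifying $\lim_n\frac1n\log|\det Df^n_x|$ with $\sum_i d_i\chi_i(x)$.
\end{itemize}
For that identification, use your proposed fix rather than the angle argument. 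The angle argument needs an invariant splitting at $x$, which is generally unavailable for the maps this paper treats. An expanding map of a compact manifold is a covering of degree at least $2$, since $\int_M|\det Df|\,dm=\deg(f)\,m(M)$ and $|\det Df|\geq\lambda^{\dim M}>1$. So it is non-invertible, and the forward Oseledets theorem gives only a filtration $T_xM=V^1_x\supsetneq V^2_x\supsetneq\cdots$. An invariant splitting with two-sided limits exists only on the natural extension. The version $\bigl((Df^n_x)^{*}Df^n_x\bigr)^{1/2n}\to\Lambda_x$, where $\Lambda_x$ has eigenvalues $e^{\chi_i(x)}$ with multiplicities $d_i$, avoids this. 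It gives $|\det Df^n_x|^{1/n}=\det\bigl(((Df^n_x)^{*}Df^n_x)^{1/2n}\bigr)\to\det\Lambda_x=e^{\sum_i d_i\chi_i(x)}$ directly by continuity of the determinant, with no angles involved.
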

For a proof of Liouville formula, see for instance \cite{Teschl-Book}.

\begin{corollary}
	\label{CorollaryLiouvilleFormulaForExpandingMaps}
	If the $C^1$ map $f$ is expanding then, for any $f$-invariant measure $\mu$
$$	\int \sum_{i=1}^{m(x)} \chi^+_i (x) \, d \mu = \int \log |\det (Df)| \, d \mu, $$
where $ \sum_{i=1}^{m(x)} \chi^+_i (x)$ is the sum of all the positive Liapounov exponents at the point $x$, counting each one as many times as its multiplicity.

\end{corollary}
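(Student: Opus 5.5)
The plan is to reduce the corollary to Theorem \ref{Theorem LiouvilleForm} by showing that, for an expanding map, the positive Liapunov exponents are all of them. Then the sum $\sum_{i=1}^{m(x)} \chi_i^+(x)$ coincides $\mu$-almost everywhere with the full sum $\sum_{i=1}^{k(x)} \chi_i(x)$, with multiplicities. Integrating and applying Liouville's formula gives the claim.

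First, fix an $f$-invariant $\mu$ and take the full $\mu$-measure set of points given by Oseledets' Theorem, where the splitting $T_xM=\oplus_{i=1}^{k(x)}E^i_x$ exists and the exponents are defined. I read the exponent as the limit of $\frac{1}{n}\log\|Df^n_x v\|$ as $n\to+\infty$; a two-sided limit does not make sense for a non-invertible expanding map. For $0\neq v\in E^i_x$ the chain rule and Definition \ref{DefinitionExpandingMaps} give, applying the expansion at each of the points $f^j(x)$,
$$\|Df^n_x v\| = \|Df_{f^{n-1}(x)}\circ\cdots\circ Df_x\, v\|\geq \lambda^n\|v\|.$$
Taking logarithms, dividing by $n$ and letting $n\to+\infty$ yields $\chi_i(x)\geq\log\lambda>0$ for every $i$.

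Consequently every exponent is counted in $\sum\chi_i^+$, with the same multiplicity $\dim E^i_x$, so $m(x)=k(x)$ and the two sums agree pointwise on the Oseledets set. The integrand $\log|\det Df|$ is continuous and bounded, since $Df_x$ is invertible with $|\det Df_x|\geq\lambda^{\dim M}$ and $M$ is compact. The exponents are likewise bounded, by $\log\max\|Df\|$. Hence both integrals are finite, and Theorem \ref{Theorem LiouvilleForm} applies and finishes the proof.

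There is essentially no obstacle. The one point needing care is the interpretation of the exponent as the forward limit of $\frac1n\log\|Df^n_xv\|$, which is what makes the lower bound $\log\lambda$ follow directly from uniform expansion.
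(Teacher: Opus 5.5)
Your proposal is correct and follows the same route as the paper: uniform expansion forces every Liapunov exponent to be at least $\log\lambda>0$, so the sum of the positive exponents is the full sum, and the corollary is just a restatement of Liouville's formula. Your added care (reading the exponent as the forward limit of $\frac{1}{n}\log\|Df^n_x v\|$, and checking that both integrands are bounded) makes explicit what the paper's one-line proof leaves implicit.
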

\begin{proof}
	Since the map is expanding, all the Liapunov exponents are positive.  Therefore, this corollary is  a restatement of Liouville formula. 
\end{proof}

\begin{proposition} \label{PropositionPesinForm=EqState}
	
	For a $C^1$ expanding map $f$, an invariant probability measure $\mu$ satisfies  Pesin's entropy formula if and only if it is an equilibrium state for the potential $\psi= -\log |\det (Df)|$ and the pressure $P(f, \psi) =0$.
\end{proposition}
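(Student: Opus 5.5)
The plan is to combine the Margulis--Ruelle inequality (Theorem \ref{TheoremRuelleIneq}) with Corollary \ref{CorollaryLiouvilleFormulaForExpandingMaps}, which rewrites the integral of the positive Liapunov exponents as an integral of a continuous function. Since $f$ is $C^1$ and expanding, $Df_x$ is invertible at every $x$, and $|\det Df_x| \geq \lambda^{\dim M} > 1$. Hence $\phi := \log|\det(Df)|$ is a well defined continuous function on $M$, and $\psi = -\phi$ is a legitimate continuous potential in the sense of Definition \ref{DefinitionEquilibriumStates}.

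The first step is to show that $P(f,\psi) \leq 0$. For every $\nu \in \mathcal M_f$, Theorem \ref{TheoremRuelleIneq} and Corollary \ref{CorollaryLiouvilleFormulaForExpandingMaps} give
$$ h_\nu(f) \leq \int \sum_{i=1}^{m(x)} \chi_i^+(x)\, d\nu = \int \log|\det(Df)|\, d\nu , $$
that is, $h_\nu(f) + \int \psi\, d\nu \leq 0$. Taking the supremum over $\nu \in \mathcal M_f$ yields $P(f,\psi) \leq 0$.

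Now let $\mu \in \mathcal M_f$.
\begin{itemize}
\item Suppose first that $\mu$ satisfies Pesin's entropy formula. By the corollary, $h_\mu(f) = \int \log|\det(Df)|\,d\mu$, so $h_\mu(f) + \int\psi\,d\mu = 0$. Since $0 \leq P(f,\psi) \leq 0$, we get $P(f,\psi)=0$, and the supremum is attained at $\mu$. Thus $\mu$ is an equilibrium state for $\psi$ and the pressure is zero.
\item Conversely, suppose $\mu$ is an equilibrium state for $\psi$ and $P(f,\psi)=0$. Then $h_\mu(f) - \int \log|\det(Df)|\,d\mu = 0$, and applying the corollary once more turns this into Pesin's entropy formula of Definition \ref{DefinitionPesinFormula}.
\end{itemize}

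The argument is essentially bookkeeping. The only point needing care is that all quantities are finite, so that the subtractions are legitimate. The integral $\int\phi\,d\nu$ is finite because $\phi$ is continuous on a compact manifold. The entropy $h_\nu(f)$ is finite because it is bounded by that integral via Margulis--Ruelle. With both finite, no $\infty - \infty$ issue arises in the definition of the pressure.
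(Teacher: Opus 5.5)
Your proof is correct and follows the same route as the paper: Margulis--Ruelle together with the Liouville corollary gives $P(f,\psi)\le 0$, and Pesin's formula is then exactly the statement $h_\mu(f)+\int\psi\,d\mu=0$, i.e.\ that $\mu$ attains the supremum and it equals $0$. Your explicit separation into the two implications and the finiteness remark just spell out what the paper compresses into a single chain of equalities.
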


\begin{proof}

	Due to  Theorem \ref{TheoremRuelleIneq} and  Corollary \ref{CorollaryLiouvilleFormulaForExpandingMaps}, we have
	$$ P(f, \psi) = \sup_{\mu \in {\mathbb M}_f} ( h_{\mu}(f) + \int \psi \, d \mu)  = $$ $$=
	 \sup_{\mu \in {\mathbb M}_f}  (h_{\mu}(f) - \int \log |\det (Df)| \, d \mu)  \leq 0.
	  $$
	So the pressure  $P(f, \psi) $ is not positive.
	
Now, recalling Definition \ref{DefinitionPesinFormula} of Pesin's entropy formula,
and using again Corollary \ref{CorollaryLiouvilleFormulaForExpandingMaps},   
an $f$-invariant measure $\mu$ satisfies Pesin's formula if and only if
$$ 0 = h_{\mu}(f) - \int \log |\det (Df)| \, d \mu =h_{\mu}(f) + \int \psi \, d \mu =
\sup _{\mu \in {\mathbb M}_f} (h_{\mu}(f) + \int \psi \, d \mu).  $$
\end{proof}

The main result to be proved along this paper is the  following:

\begin{maintheorem} 
	\label{MainTheorem}
	
	Let $f:M \mapsto M$ be an expanding $C^1$ map on a compact Riemannian manifold $M$ of finite dimension.
	
	Then, any (necessarily existing) pseudo-physical measure $\mu$ for $f$ satisfies Pesin's entropy formula. 	
		Namely,
	$$   h_{\mu}(f) =  \int \sum_{i=1}^{m(x)} \chi_i^+(x) \, d \mu = \int  \log |\det (Df)|\, d \mu. $$
	Equivalently, $\mu$ is an equilibrium state for the potential
	 $$\psi= -\log |\det (Df)|, $$ and the pressure $P(f, \psi) = 0$.

\end{maintheorem}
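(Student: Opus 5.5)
By Margulis--Ruelle (Theorem \ref{TheoremRuelleIneq}) and Corollary \ref{CorollaryLiouvilleFormulaForExpandingMaps}, every invariant $\mu$ satisfies $h_\mu(f)\le \int \phi\,d\mu$, where $\phi:=\log|\det Df|$. This $\phi$ is continuous because $f$ is $C^1$ and $|\det Df|\ge \lambda^{\dim M}>0$. So it suffices to prove the reverse inequality
$$h_\mu(f)\ \ge\ \int \phi\,d\mu$$
for every pseudo-physical $\mu$. The equivalence with the equilibrium-state formulation then follows from Proposition \ref{PropositionPesinForm=EqState}. The plan follows the strategy of \cite{CatEnr-Portugaliae, CatCerEnr-DomSplit}. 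First characterize pseudo-physical measures by a volume-counting estimate. Then convert that estimate into an entropy lower bound by covering the relevant points with images of small injectivity domains of $f^n$.

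\textbf{Step 1: a large-deviation lower bound from pseudo-physicality.} Fix $\epsilon>0$ and the weak$^*$ ball $\mathcal V=\{\nu:\dist^*(\nu,\mu)<\epsilon\}$. Pseudo-physicality gives $m(B_\epsilon(\mu))>0$. Every $x\in B_\epsilon(\mu)$ has $\sigma_n(x)\in\mathcal V$ for infinitely many $n$, so for infinitely many $n$ the set $C_n:=\{x:\sigma_n(x)\in\mathcal V\}$ satisfies $m(C_n)\ge e^{-n\epsilon}$. (If $m(C_n)\le e^{-n\epsilon}$ for all large $n$, Borel--Cantelli would force $m(B_\epsilon(\mu))=0$.)

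\textbf{Step 2: volume of $C_n$ versus counting.} Because $f$ is a $C^1$ expanding map, it is a covering map. There is $\delta>0$ such that every inverse branch of $f^n$ on a $\delta$-ball contracts distances by $\lambda^{-n}$, so the dynamical balls $B_n(y,\delta)$ are exactly images of $\delta$-balls under inverse branches. By the change of variables formula,
$$m(B_n(y,\delta))=\int_{B(f^ny,\delta)}|\det Df^n|^{-1}\circ f^{-n}_y\,dm .$$
For $C^{1+\alpha}$ maps one would use bounded distortion here, which is unavailable for $C^1$ maps. Instead, uniform continuity of $\phi$ and the contraction of inverse branches show that along the branch $\phi(f^j z)$ and $\phi(f^j y)$ differ by at most $\eta(\delta\lambda^{-(n-j)})$, where $\eta$ is the modulus of continuity of $\phi$. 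Summing gives only the subexponential distortion bound
$$\Big|\log|\det Df^n(z)|-\sum_{j<n}\phi(f^jy)\Big|\le n\,\eta'(\delta),\qquad \eta'(\delta)\to0 \text{ as } \delta\to0,$$
and that is sufficient. Hence for $y\in C_n$ we get $m(B_n(y,\delta))\le C\exp\!\big(-n\int\phi\,d\mu+n(\eta'(\delta)+\epsilon')\big)$, where we use $\sigma_n(y)\in\mathcal V$ and choose $\epsilon$ small relative to the continuity of $\phi$. Cover $C_n$ by a maximal $(n,\delta)$-separated set $E_n\subset C_n$, with balls $B_n(y,2\delta)$. Step 1 then yields
$$\#E_n\ \ge\ \exp\Big(n\int\phi\,d\mu-n(\epsilon+\eta'(2\delta)+\epsilon')\Big)$$
for infinitely many $n$.

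\textbf{Step 3: from separated sets to the entropy of $\mu$.} This is the main obstacle, because $E_n$ is a set of points whose empirical measures are near $\mu$, whereas $\mu$ need not be ergodic. I would follow Misiurewicz's proof of the variational principle. Put $\nu_n=\frac{1}{\#E_n}\sum_{y\in E_n}\sigma_n(y)$. Every limit point of $\nu_n$ along the good subsequence lies in the closed $\epsilon$-ball around $\mu$ and is invariant. Take a partition $\mathcal P$ with diameter less than $\delta$ and boundary of zero measure for these limits. Then
$$h(\text{limit},\mathcal P)\ge\limsup\frac1n\log\#E_n\ge\int\phi\,d\mu-\epsilon''.$$
This produces invariant measures $\mu_\epsilon\to\mu$ with $h_{\mu_\epsilon}(f)\ge\int\phi\,d\mu-\epsilon''$. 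For expanding maps the entropy map $\nu\mapsto h_\nu(f)$ is upper semicontinuous, since $f$ is positively expansive and any partition of diameter below the expansivity constant is generating. Letting $\epsilon,\delta\to0$ therefore gives $h_\mu(f)\ge\limsup h_{\mu_\epsilon}(f)\ge\int\phi\,d\mu$. Together with the opposite inequality from the first paragraph, this proves the Main Theorem.
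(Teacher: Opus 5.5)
Your argument is correct, but it is organized differently from the paper's.

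\textbf{The paper's route.} The paper argues by contradiction. If $h_\mu(f)+\int\psi\,d\mu<-r$, Lemma \ref{Lemma2} shows $m(C_n)\le e^{-nr/2}$ for \emph{all} large $n$. It does this in four moves:
\begin{itemize}
\item it writes $m(C_n)$ as a sum over the pieces of $\mathcal P^n$ that meet $C_n$ and bounds it by $e^{n\epsilon/3}L$, where $L=\sum_k e^{S_n\psi(y_k)}$;
\item it uses the identity $\log L=n\int\psi\,d\mu_n+H(\mathcal P^n,\nu_n)$ for the Gibbs-weighted atomic measure $\nu_n=\sum_k\lambda_k\delta_{y_k}$;
\item it bounds $\frac1n H(\mathcal P^n,\nu_n)$ by $h_\mu+\epsilon/3$ via Lemma \ref{Lemma1}, which is Misiurewicz's inequality carried out directly at $\mu$ with a partition satisfying $\mu(\partial\mathcal P)=0$;
\item Borel--Cantelli then contradicts pseudo-physicality.
\end{itemize}

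\textbf{Your route.} You run the same ingredients in the forward direction. Pseudo-physicality gives $m(C_n)\ge e^{-n\epsilon}$ for infinitely many $n$. Bowen-ball volume bounds turn this into a lower bound on the size of $(n,\delta)$-separated sets in $C_n$; uniform weights suffice here because all Birkhoff sums of $\phi$ over points of $C_n$ are close to $n\int\phi\,d\mu$. Misiurewicz's construction then produces invariant measures $\mu_\epsilon$ within $\epsilon$ of $\mu$ with $h_{\mu_\epsilon}(f)\ge\int\phi\,d\mu-\epsilon''$. Finally, upper semicontinuity of $\nu\mapsto h_\nu(f)$ for positively expansive maps transfers the bound to $\mu$.

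\textbf{What each buys.} Your route is more modular: it uses the variational-principle construction and upper semicontinuity of entropy as black boxes. The paper's route never constructs auxiliary invariant measures; the upper semicontinuity is hidden in Lemma \ref{Lemma1}'s use of the continuity of $H(\mathcal P^q,\cdot)$ at $\mu$. It also yields a stronger, quantitative statement (Lemma \ref{Lemma2}): an exponential large-deviation upper bound for the Lebesgue measure of points whose empirical measures stay near any invariant $\mu$ with an entropy deficit.

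\textbf{One point to make explicit.} Your claim that limit points of $\nu_n=\frac{1}{\#E_n}\sum_{y\in E_n}\sigma_n(y)$ lie in the closed $\epsilon$-ball around $\mu$ relies on $\dist^*$-balls being convex. This holds for the metric the paper fixes (Lemma \ref{LemmaBolasConvexas}) but not for an arbitrary metric inducing the weak$^*$ topology. Since pseudo-physicality does not depend on the choice of metric, you can simply fix that one.
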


Theorem \ref{MainTheorem} is a generalization to any finite dimension  of the result previously proved in \cite{CatEnr-Portugaliae} in dimension one. 
The proof of Theorem \ref{MainTheorem}, which we will expose along this paper, was  presented by F. Valenzuela is his unpublished  thesis in 2017 \cite{Valenzuela-Tesis}.  In 2019, and previously in 2017 as a preprint,  Araujo and Santos  \cite{Araujo-Santos-2019} proved a more general result that holds not only for $C^1$ (uniformly) expanding maps of Theorem \ref{MainTheorem} (according to Definition \ref{DefinitionExpandingMaps}), but also for  maps that are non-uniformly expanding.

%%%%%%%%%%%%%%%%%%%%%%%%%%%%%

\vspace{.5cm}

\noindent {\bf Organization of the paper. }

In Section \ref{Section2-Expansiveness}, we prove an important topological property of expanding maps (the expansiveness) that we will need to prove Theorem \ref{MainTheorem}.
In Section \ref{SectionEntropy} we define the metric entropy and recall some of its well known properties.
In Section \ref{Section4-ProofOfMainTheorem}, we prove Theorem \ref{MainTheorem}.
In Section \ref{Section5-Examples}, we give examples.

\vspace{.5cm}

\noindent {\bf Acknowledgements.}
Sections  \ref{Section2-Expansiveness}, \ref{SectionEntropy} and  \ref{Section4-ProofOfMainTheorem} are a translation to English, with some light changes and adding,  of the Master Thesis of Fernando Valenzuela \cite{Valenzuela-Tesis} .  

Fernando Valenzuela thanks the financial support of the Master Scholarship during his Graduate Studies at PEDECIBA  (Programa de Desarrollo de Ciencias Básicas), Área Matemática (Uruguay).  Eleonora Catsigeras thanks the financial support of ANII (Agencia Nacional de Investigación e Innovación) and PEDECIBA, both in Uruguay.

\section {Expanding maps are expansive.} \label{Section2-Expansiveness}
To prove Theorem \ref{MainTheorem} we need an important topological property defined for continuous maps, called expansiveness. We need to show that, in particular, the $C^1$ expanding maps, accoding to Definition \ref{DefinitionExpandingMaps},  are expansive.

\begin{definition} {\bf (Expansive maps.)}
	\label{DefinitionExpansive} \em 
	
	A continuous map $f: M \mapsto M$ is \em expansive in the future \em, if there exists a constant $\alpha>0$, called the \em expansivity constant, \em  such that if $x,y \in M$ satisfy
	$$ \dist (f^n(x), f^n(y)) \leq \alpha \ \ \forall \ n\geq 0,      $$
	then $x=y$.
\end{definition}
The expansiveness is understood as the sensitivity to the initial condition. In fact, the two orbits with two different initial states, even if these initial states are arbitrarily near, they separate more than $\alpha$ for some iterate in the future.

 \begin{proposition} \label{PropositionExpandingIsExpansive}
	
	If the $C^1$ map $f : M \mapsto M$ on the compact Riemannian manifold $M$ is expanding, then it is 
	expansive in the future.
\end{proposition}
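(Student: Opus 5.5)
The plan is to show that $f$ expands distances by a uniform factor at small scales: there exist $r>0$ and $\lambda'>1$ such that
$$\dist(f(x),f(y)) \geq \lambda' \dist(x,y) \quad \mbox{whenever } \dist(x,y)\leq r.$$
Granting this, take $\alpha = r$. If $\dist(f^n(x),f^n(y))\leq \alpha$ for all $n\geq 0$, then induction gives $\alpha \geq \dist(f^n(x),f^n(y)) \geq (\lambda')^n \dist(x,y)$ for every $n$. Letting $n\to\infty$ forces $\dist(x,y)=0$, so $x=y$.

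To prove the local expansion, I would first show that $f$ is a local diffeomorphism with uniformly controlled inverse branches. Since $\|Df_x v\|\geq\lambda\|v\|$, each $Df_x$ is invertible and $\|(Df_x)^{-1}\|\leq \lambda^{-1}$. By the inverse function theorem and compactness, there is $\rho>0$ such that for every $x$ there is a $C^1$ local inverse $g_x$ of $f$, defined on the ball $B(f(x),\rho)$, with $g_x(f(x))=x$ and $\|Dg_x\|\leq \lambda^{-1}$ (or at most $\lambda'^{-1}$ for a fixed $1<\lambda'<\lambda$, using uniform continuity of $Df$ to absorb errors). Next I would fix $\rho$ smaller than the injectivity radius, so that balls of radius $\rho$ are geodesically convex.

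Now let $z=f(y)$ with $\dist(f(x),z)<\rho$. Joining $f(x)$ to $z$ by a minimizing geodesic inside $B(f(x),\rho)$ and applying $g_x$ gives a curve from $x$ to $g_x(z)$ of length at most $\lambda'^{-1}\dist(f(x),z)$. So it remains to check that $g_x(z)=y$ when $y$ is close to $x$.

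This identification is the main obstacle, because $f$ is not injective and $y$ might be a different preimage of $z$. To handle it, I would use uniform continuity of $f$: choose $r$ such that $\dist(x,y)\leq r$ implies $\dist(f(x),f(y))<\rho$. I would also choose $r$ small enough that $f$ is injective on every ball of radius $2r$. This injectivity holds with a uniform radius because a local diffeomorphism of a compact manifold has a positive Lebesgue number for injectivity: otherwise there would be pairs $x_k\neq y_k$ with $f(x_k)=f(y_k)$ and $\dist(x_k,y_k)\to 0$, and a limit point would contradict local invertibility there.

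Then $g_x(z)$ lies within $\lambda'^{-1}\rho$ of $x$, and shrinking $\rho$ if needed makes this less than $2r$. Both $y$ and $g_x(z)$ are then in $B(x,2r)$ and have the same image $z$, so injectivity gives $g_x(z)=y$. Hence $\dist(x,y)\leq \lambda'^{-1}\dist(f(x),f(y))$, which is the desired local expansion, and expansiveness follows as in the first paragraph.
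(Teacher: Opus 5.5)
Your proposal is correct up to a bookkeeping slip noted below. Its architecture matches the paper's: first a uniform local expansion $\dist(f(x),f(y))\ge\lambda\dist(x,y)$ below a fixed scale, then the same iteration argument.

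Where you differ is in proving the local expansion. The paper does it in one line, asserting $\|f(y)-f(x)\|=\|Df_z(y-x)\|$ in exponential charts for some intermediate point $z$. This is not a valid mean value theorem in dimension $\ge 2$: the integral form $f(y)-f(x)=\int_0^1 Df_{x+t(y-x)}(y-x)\,dt$ gives the upper bound directly, and a lower bound only after an extra uniform-continuity argument. More seriously, the paper's scale $\delta$ is chosen only so that $\exp_x$ is a diffeomorphism on $B_\delta(0)$, with no reference to injectivity of $f$, and that is not enough. For $\theta\mapsto 3\theta$ on $\mathbb{R}/2\pi\mathbb{Z}$ and $\delta\in[2\pi/3,\pi)$, the points $0$ and $2\pi/3$ have the same image, so $\delta$ is not an expansivity constant.

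Your ingredients are exactly what is needed to make the lower bound rigorous: inverse branches $g_x$ on balls of uniform radius with $\|Dg_x\|\le\lambda^{-1}$, pulling back a minimizing geodesic, and a uniform injectivity scale forcing $g_x(f(y))=y$. The cost is only the compactness arguments. Two simplifications are available:
\begin{itemize}
\item You do not need $\lambda'$, since $Dg_x(w)=(Df_{g_x(w)})^{-1}$ has norm at most $\lambda^{-1}$ exactly.
\item You do not need geodesic convexity (which the injectivity radius does not give anyway), since a minimizing geodesic starting at the center of a ball stays inside it.
\end{itemize}

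The slip is in the order of constants. You choose $r$ depending on $\rho$ and then ``shrink $\rho$'' so that $\lambda^{-1}\rho<2r$, which can invalidate the choice of $r$. Because your injectivity radius is tied to $2r$, this loop need not close: $\dist(f(x),f(y))$ can be of order $\max\|Df\|\cdot r$, which may exceed $2\lambda r$. Fix the order as follows:
\begin{itemize}
\item first a uniform $\eta>0$ such that $f$ is injective on every ball of radius $\eta$;
\item then $\rho$, at most the inverse-branch radius, with $\lambda^{-1}\rho<\eta$;
\item then $r<\eta$ such that $\dist(x,y)\le r$ implies $\dist(f(x),f(y))<\rho$.
\end{itemize}
Then $y$ and $g_x(f(y))$ both lie in $B(x,\eta)$, and your argument goes through.
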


\begin{proof}
Let $x \in M$, and $\delta >0$. Denote by $B_{\delta}(0) \subset T_xM $ the open ball in the tangent space at $x$,  centered at $0$ with radius $\delta$, i.e. the set of vectors in $T_xM$ with norm smaller than $\delta$. Choose $\delta$ small enough such that the exponential map $\exp_x:B_{\delta}(0) \subset T_xM  \rightarrow M$ is a diffeomorphism onto its image.
	Explicitly, for any $y \in M$ such that $\mbox{dist}(x,y)<\delta$, there exists a unique vector $v=\exp^{-1}_x (y) \in T_xM$, and this vector satisfies $\| v \|=\mbox{dist}(x,y)<\delta_1$. Since $M$ is compact, we can choose a uniform $\delta >0$, namely $\delta$ does not depends on  $x$.
	
	In the sequel we will denote $y-x$ to refer to the vector $v=\exp_x^{-1}(y) \in B_{\delta}(0)  \subset T_xM$.
	
	Since $f$ is of $C^1$ class
	$$\|f(y)-f(x)\|=\|Df_z(y-x)\|,$$
for some point $z$ in the (convex) ball centered at $x$ of radius $\delta$.
	
Then $$\|f(y)-f(x)\| \geq \lambda \| y-x \|, $$
where $\lambda >1$ is the constant in Definition \ref{DefinitionExpansive} of the expanding map $f$.

	It is enough to prove that $\delta>0$ is a constant of expansivity for $f$, as in Definition \ref{DefinitionExpansive}.
	Assume that $$\dist (f^j(x), f^j(y)) = \| f^j(y) - f^j(x)\| \leq \delta \ \ \forall \ j \geq 0.$$
	Then $$ \| f^{j+1}(y) - f^{j+1}(x)\| \geq \lambda  \| f^j(y) - f^j(x)\|  \ \ \forall \ j \geq 0,$$
	and therefore
	$$ \delta \geq  \dist (f^n(x), f^n(y)) = \| f^{n}(y) - f^{n}(x)\| \geq \lambda^n  \|y-x\|  \ \ \forall \ n \geq 0.$$
Since $\lambda^n \rightarrow + \infty$ with $n$, while $\lambda^n\|y-x\|\leq \alpha$ for all $n \geq 0$, we deduce that $x= y$.
\end{proof}

\section{The metric entropy.} \label{SectionEntropy}

In this section we review the definition of metric entropy of $f$ with respect to an invariant measure $\mu$ and state some of its properties that we will use in the proof of Theorem \ref{MainTheorem}.

A \em finite measurable partition \em  $\mathcal{P} $ is a finite family of measurable
sets $P \subset M$ that are pairwise disjoint and whose union is $M$. The sets $P \in \mathcal P$ are \em the pieces of \em $\mathcal P$. We agree to simply name \em partition \em to refer to a finite measurable partition. 

The \em boundary \em $\partial \mathcal P$ of a partition is the union of the topological boundaries of its pieces. Namely,
$$\partial \mathcal P :=  \bigcup \{ \partial P: \ P \in \mathcal P\}.$$

The \em diameter \em $\mbox{diam} (\mathcal P)$ of a partition is the maximum diameter of its pieces. Namely,
$$\mbox{diam} (\mathcal P) = \max_{P \in \mathcal P} \mbox{diam}(P).    $$
The \em product \em ${\mathcal P} \vee {\mathcal Q} $ of two partitions is the new partition whose pieces are $P \bigcap Q$, where $P\in {\mathcal P}$ and $Q\in {\mathcal Q}$.

More generally, for each natural number $N \geq 1$, if $\{\mathcal P_i\}_{1 \leq i \leq N}$ is a collection of $N$ partitions ${\mathcal P}_i$, we define their \em product \em  as follows:
$$\bigvee_{i= 1}^N \mathcal{P}_i=\left \{ \bigcap_{i= 1}^N P_i \colon \ \ \ \ P_i \in \mathcal{P}_i  \ \ \forall \ 1 \leq i \leq N\right \}.$$

\begin{definition}
	\label{Definition H(P mu)} 
	\em The \em entropy $ H({\mathcal P}, \mu) $ \em of the partition  ${\mathcal P}$ with respect to a probability measure $\mu$  is
		$$ H({\mathcal P}, \mu) : =-\sum _{P \in {\mathcal P}} \mu (P) \log \mu (P),
	$$
	where we
agree to take $0 \cdot\log 0 = 0$.
	
\end{definition}

Now, let us introduce the dynamics of $f: M \mapsto M$ to the study of the entropy of the partitions with respect to a probability measure.

For any partition $\mathcal P$, we consider the following product partition:
\begin{equation}
	\label{Equality Defining P^n}
	{\mathcal P}_f^n := \bigvee_{i=0}^{n-1} f^{-i} \mathcal{P}={\mathcal P}\vee f^{-1}{\mathcal P}\vee \ldots \vee f^{-(n-1)}
	{\mathcal P},\end{equation}
where $f^{-i} \mathcal{P}:=\{ f^{-i} (P): P \in \mathcal{P} \}$.

\begin{proposition} If $\mu$ is $f$-invariant, then the following limit exists and  satisfies the equality and inequality at right:
	$$ \lim  _{n \rightarrow \infty} \frac{ H({\mathcal P}_f^n, \mu)}{n} = \inf _{n \geq 1} \frac{ H({\mathcal P}_f^n, \mu)}{n} \leq H({\mathcal P}, \mu).$$
\end{proposition}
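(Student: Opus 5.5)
The plan is to set $a_n := H({\mathcal P}_f^n, \mu)$, prove that $(a_n)$ is subadditive, $a_{n+m} \leq a_n + a_m$, and then apply Fekete's lemma. Fekete's lemma says that for a subadditive sequence of non-negative reals, $\lim_n a_n/n$ exists and equals $\inf_n a_n/n$. Once the limit is identified with the infimum, the inequality on the right is immediate from the term $n=1$, since ${\mathcal P}_f^1 = {\mathcal P}$ gives $a_1 = H({\mathcal P},\mu)$.

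Subadditivity needs two standard properties of $H$. The first is $H({\mathcal P}\vee{\mathcal Q},\mu) \leq H({\mathcal P},\mu) + H({\mathcal Q},\mu)$ for any two finite partitions. I would prove it by writing
\[
H({\mathcal P}\vee{\mathcal Q},\mu) - H({\mathcal P},\mu) = \sum_{P} \mu(P)\Bigl(-\sum_{Q} \tfrac{\mu(P\cap Q)}{\mu(P)}\log\tfrac{\mu(P\cap Q)}{\mu(P)}\Bigr),
\]
which is the conditional entropy $H({\mathcal Q}\mid{\mathcal P},\mu)$. Next, apply Jensen's inequality to the concave function $\varphi(t) = -t\log t$: for each $Q$, $\sum_P \mu(P)\,\varphi\bigl(\mu(P\cap Q)/\mu(P)\bigr) \leq \varphi(\mu(Q))$. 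Summing over $Q$ gives $H({\mathcal Q}\mid{\mathcal P},\mu) \leq H({\mathcal Q},\mu)$. Pieces of measure zero are handled by the convention $0\log 0 = 0$.

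The second property is invariance: $H(f^{-1}{\mathcal Q},\mu) = H({\mathcal Q},\mu)$ whenever $f^*\mu = \mu$. This holds because $\mu(f^{-1}Q) = \mu(Q)$ piece by piece. I also need that distinct pieces of $f^{-1}{\mathcal Q}$ are disjoint preimages, so the sums match; empty pieces contribute zero.

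With these two facts, write ${\mathcal P}_f^{n+m} = {\mathcal P}_f^n \vee f^{-n}\bigl({\mathcal P}_f^m\bigr)$. This uses $f^{-n}$ commuting with intersections, i.e. $f^{-n}\bigl(\bigvee_{i<m} f^{-i}{\mathcal P}\bigr) = \bigvee_{i<m} f^{-(n+i)}{\mathcal P}$. Then
\[
a_{n+m} \leq a_n + H(f^{-n}{\mathcal P}_f^m,\mu) = a_n + a_m,
\]
applying invariance $n$ times. Finally I would include Fekete's lemma with its short proof, since the paper has not stated it. Fix $k$ and write $n = qk + r$ with $0 \leq r < k$. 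Subadditivity gives $a_n \leq q a_k + a_r$, hence $\limsup_n a_n/n \leq a_k/k$. Taking the infimum over $k$ yields $\limsup_n a_n/n \leq \inf_k a_k/k \leq \liminf_n a_n/n$, so the limit exists and equals the infimum. All $a_n$ are finite, since each is bounded by the log of the number of pieces, and non-negative, so the infimum is a finite number at least $0$.

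The main obstacle is the subadditivity inequality for $H$ of a join, that is, the Jensen step. Everything else is bookkeeping.
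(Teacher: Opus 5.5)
Your proof is correct and is the standard argument behind the reference the paper cites for this statement (Viana--Oliveira, Lemma 9.1.12). That argument obtains subadditivity of $a_n=H({\mathcal P}_f^n,\mu)$ from $H({\mathcal P}\vee{\mathcal Q},\mu)\le H({\mathcal P},\mu)+H({\mathcal Q},\mu)$ together with $f$-invariance, and then applies Fekete's lemma; the inequality comes from the $n=1$ term. The paper gives no argument beyond that citation, so your version, with the Jensen proof of the join inequality and $a_0:=0$ understood when $r=0$ in the Fekete step, simply fills it in.
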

\begin{proof}
See for instance \cite{Viana-Oliveira-Book}, Lemma 9.1.12.
\end{proof}

\begin{definition} \em \label{DefinitionEntropyOf f w.r.partition}
	Let $\mathcal P$ be a partition, and $\mu$ be an $f$-invariant measure,
	We call the following expression $ h_\mu(f,{\mathcal P})$ \em the entropy of $f$ with respect to the partition $\mathcal P$ and to the measure $\mu$: \em
	$$ h_{\mu}(f,{\mathcal P}) := \lim  _{n \rightarrow \infty} \frac{ H({\mathcal P}_f^n, \mu)}{n} = \inf _{n \geq 1} \frac{ H({\mathcal P}_f^n, \mu)}{n}.$$
	\em \end{definition}

\begin{definition} {\bf (The metric entropy.)}
	\label{DefinitionMetricEntropy}
		\em	
	
Let $\mu$ be an $f$-invariant probability measure. We call the following expression $h_{\mu}(f)$ \em the metric entropy of $f$ with respect to the measure $\mu$:
	$$ h_\mu (f) \ :=\  \sup \ \{ h(f,{\mathcal P}): \   {\mathcal P} \mbox{ finite measurable partition} \}.$$

\end{definition}

\subsection{Properties of the entropy of partitions.}

In this subsection we state some properties of the entropy of partitions that will be used in the proof of Theorem \ref{MainTheorem}.  For more properties of the entropy, see for instance the books \cite{kellerBook} and
\cite{Viana-Oliveira-Book}.

\begin{proposition}
	\label{PropositionH leq log p}
	For any (finite measurable) partition ${\mathcal P}$ and any probability measure $\mu$
$$	H({\mathcal P}, \mu)  \leq \log p,$$
	where $p$ is the number of pieces of $\mathcal P$.
\end{proposition}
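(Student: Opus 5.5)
This is the standard bound $H(\mathcal P,\mu)\le \log p$, and I would prove it using the strict concavity of the function $\varphi(t) = -t\log t$ on $[0,1]$, extended by $\varphi(0)=0$ following the paper's convention $0\cdot\log 0 = 0$. Let $P_1,\dots,P_p$ be the pieces of $\mathcal P$ and write $a_i := \mu(P_i)$. Since the pieces are pairwise disjoint, measurable and cover $M$, we have $a_i \ge 0$ and $\sum_{i=1}^p a_i = 1$. By Definition \ref{Definition H(P mu)},
$$H(\mathcal P,\mu) = \sum_{i=1}^p \varphi(a_i).$$

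First I check that $\varphi$ is continuous and concave on $[0,1]$. On $(0,1]$ we have $\varphi''(t) = -1/t < 0$, and $\varphi(t)\to 0 = \varphi(0)$ as $t\to 0^+$. Hence $\varphi$ is concave on the closed interval $[0,1]$.

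Next I apply Jensen's inequality with equal weights $1/p$ to the points $a_1,\dots,a_p$:
$$\frac{1}{p}\sum_{i=1}^p \varphi(a_i) \le \varphi\Big(\frac1p\sum_{i=1}^p a_i\Big) = \varphi\Big(\frac1p\Big) = \frac{1}{p}\log p.$$
Multiplying by $p$ gives $H(\mathcal P,\mu)\le \log p$.

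An alternative route is to write $H(\mathcal P,\mu)-\log p = \sum_{a_i>0} a_i \log\frac{1}{p a_i}$, which by concavity of $\log$ is at most $\log \sum_{a_i>0} \frac1p \le \log 1 = 0$. Either way, the only point needing care is the treatment of pieces of zero measure. It is handled by the continuity of $\varphi$ at $0$ in the first argument, or by simply dropping those terms in the second. Equality holds exactly when all $a_i = 1/p$, although this is not needed here.
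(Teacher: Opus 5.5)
Your proof is correct, and it is essentially the argument the paper relies on: the paper only cites Viana--Oliveira, Lemma 9.1.3, whose proof is exactly Jensen's inequality for the concave function $-t\log t$ with equal weights $1/p$. The same concavity of $\phi(u)=-u\log u$ is what the paper itself uses in the proof of Proposition \ref{PropositionEntropyConvexComb}, and your handling of zero-measure pieces is sound.
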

\begin{proof} See for instance \cite{Viana-Oliveira-Book}, Lemma 9.1.3. \end{proof}

\begin{proposition}\label{Proposition H(P vee Q)}.
	Let ${\mathcal P, Q }$ be two partitions and $\mu$ any probability measure. Then,	
	$$ H({\mathcal P},\mu) \leq H({\mathcal P \vee Q},\mu) \leq H({\mathcal P,\mu}) + H({\mathcal Q},\mu).$$
\end{proposition}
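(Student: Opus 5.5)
The two inequalities will be proved separately, each by a direct computation from Definition \ref{Definition H(P mu)}. Throughout I will use the function $\varphi(t) = -t\log t$ on $[0,1]$, with $\varphi(0)=0$, which is continuous and concave. The pieces of $\mathcal P \vee \mathcal Q$ are the sets $P\cap Q$. For each $P\in\mathcal P$ with $\mu(P)>0$ I will use the conditional probabilities $\mu(Q\mid P) = \mu(P\cap Q)/\mu(P)$, $Q\in\mathcal Q$. Pieces of measure zero contribute nothing to any of the sums, so they can be discarded.

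For the left inequality, I write $\mu(P)=\sum_{Q\in\mathcal Q}\mu(P\cap Q)$. Since $\log$ is increasing, $-\log\mu(P\cap Q)\ge -\log\mu(P)$ for every $Q$ with $\mu(P\cap Q)>0$. Multiplying by $\mu(P\cap Q)$ and summing over $Q$ gives
$$\sum_Q \varphi(\mu(P\cap Q)) \ge \varphi(\mu(P)).$$
Summing over $P$ then yields $H(\mathcal P\vee\mathcal Q,\mu)\ge H(\mathcal P,\mu)$.

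For the right inequality, I use the chain-rule identity. Writing $\log\mu(P\cap Q)=\log\mu(P)+\log\mu(Q\mid P)$ gives
$$H(\mathcal P\vee\mathcal Q,\mu)=H(\mathcal P,\mu)+\sum_{P}\mu(P)\sum_{Q}\varphi(\mu(Q\mid P)),$$
and the second term is the conditional entropy $H(\mathcal Q\mid\mathcal P)$. It then suffices to show $H(\mathcal Q\mid\mathcal P)\le H(\mathcal Q,\mu)$. For each fixed $Q$, concavity of $\varphi$ (Jensen's inequality with weights $\mu(P)$) gives
$$\sum_P\mu(P)\,\varphi(\mu(Q\mid P))\le\varphi\Big(\sum_P\mu(P)\mu(Q\mid P)\Big)=\varphi(\mu(Q)).$$
Summing over $Q$ gives the claim.

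The only step needing care is the concavity/Jensen step together with the measure-zero pieces. Concavity of $\varphi$ follows from $\varphi''(t)=-1/t<0$ on $(0,1]$, with continuity at $0$. The measure-zero pieces are handled by the convention $0\log0=0$. Everything else is bookkeeping.
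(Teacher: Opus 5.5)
Your proof is correct. The left inequality follows from monotonicity of $\log$ piece by piece, and the right one from the chain rule $H(\mathcal P\vee\mathcal Q,\mu)=H(\mathcal P,\mu)+H(\mathcal Q\mid\mathcal P)$ together with Jensen for the concave function $-t\log t$, which is the same concavity the paper uses in Proposition \ref{PropositionEntropyConvexComb}. The paper gives no proof of its own here, only a citation to Section 9.1 of \cite{Viana-Oliveira-Book}, where the argument is essentially this one.
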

\begin{proof}   See for instance \cite{Viana-Oliveira-Book}, Section 9.1.
	%\cite{Catsigeras-TeoErg-Book}, Corollary 9.1.8.
 \end{proof}

\begin{proposition}  \label{Proposition H(P^n) H(P, f*mu)}  For any partition $\mathcal P$ and any  (not necessarily $f$-invariant) probability measure $\mu$:
	$$H({\mathcal P}_f^n, \mu) \leq \sum_{i=0}^{n-1}H(f^{-i} \mathcal P, \mu) = \sum_{i=0}^{n-1}H(\mathcal P, {f^*}^{i} \mu).   $$	
\end{proposition}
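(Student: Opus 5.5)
The plan is to prove the inequality first and then the equality, using only Proposition \ref{Proposition H(P vee Q)} and the behaviour of the pull-back $f^*$ under preimages.

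For the inequality, I would induct on $n$. The case $n=1$ is trivial, since ${\mathcal P}_f^1 = \mathcal P = f^{0}\mathcal P$. Assume the bound holds for $n$. By definition (\ref{Equality Defining P^n}),
$$ {\mathcal P}_f^{n+1} = {\mathcal P}_f^{n} \vee f^{-n}\mathcal P. $$
Here $f^{-n}\mathcal P$ is itself a finite measurable partition: preimages of pairwise disjoint measurable sets covering $M$ are again pairwise disjoint, measurable, and cover $M$. The right-hand inequality of Proposition \ref{Proposition H(P vee Q)}, applied with $\mu$ a general probability measure (it does not require invariance), gives
$$ H({\mathcal P}_f^{n+1},\mu) \leq H({\mathcal P}_f^{n},\mu) + H(f^{-n}\mathcal P,\mu). $$
Combining this with the inductive hypothesis yields the bound for $n+1$. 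One small point needs care: some intersections may be empty, or repeated sets may occur when listing $f^{-i}(P)$. Both are harmless, because empty pieces contribute $0\cdot\log 0 = 0$ to the entropy.

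For the equality, I would compare the two sums term by term. Fix $0\le i\le n-1$. By the definition of the pull-back iterated $i$ times, $({f^*}^{i}\mu)(P) = \mu(f^{-i}(P))$ for every Borel set $P$. Hence
$$ H(f^{-i}\mathcal P,\mu) = -\sum_{P\in\mathcal P}\mu(f^{-i}P)\log\mu(f^{-i}P) = -\sum_{P\in\mathcal P}({f^*}^i\mu)(P)\log ({f^*}^i\mu)(P) = H(\mathcal P,{f^*}^i\mu). $$
If distinct $P, P'\in\mathcal P$ have the same preimage, that common preimage must be empty, since $P$ and $P'$ are disjoint. So such pieces contribute zero on both sides, and the indexing is consistent.

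Summing over $i$ then gives the equality. There is no real obstacle in this argument. The only points to watch are, first, that Proposition \ref{Proposition H(P vee Q)} is used for non-invariant measures, and second, the bookkeeping of empty or coinciding pieces noted above.
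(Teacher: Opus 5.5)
Your proposal is correct and follows essentially the same route as the paper: subadditivity of entropy under joins (Proposition \ref{Proposition H(P vee Q)}, applied to a general, non-invariant $\mu$) gives the inequality, and the identity $\mu(f^{-i}P) = ({f^*}^{i}\mu)(P)$ gives the equality term by term. The only difference is that you spell out as an explicit induction on $n$ what the paper applies to the $n$-fold join in one step.
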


\begin{proof}
	
	Applying Equality (\ref{Equality Defining P^n}) and Proposition \ref{Proposition H(P vee Q)} we have
	\begin{equation}
		\label{Equality2}
		H({\mathcal P}_f^n, \mu) = H\left( \bigvee_{i= 0}^{n-1} f^{-i} \mathcal P, \mu \right) \leq \sum_{i=0}^{n-1}H(f^{-i} \mathcal P, \mu).
	\end{equation}
	Besides, from Definition \ref{Definition H(P mu)}:
	$$H(f^{-i} \mathcal P, \mu) = -\sum_{P \in \mathcal P} \mu(f^{-i} P) \log  \mu(f^{-i} P),    $$
	and from the definition of the pull-back $f^*$, we have
	$ \mu (f^{-i} P) =  {f^*}^{i} \mu(P) $. So,
	$$H (f^{-i} \mathcal P, \mu) = H(\mathcal P, {f^*}^{i} \mu) .$$
	Finally, substituting this last equality in (\ref{Equality2}), we deduce
	$$H({\mathcal P}_f^n, \mu) \leq \sum_{i=0}^{n-1}H(f^{-i} \mathcal P, \mu) = \sum_{i=0}^{n-1}H(\mathcal P, {f^*}^{i} \mu),  $$ 
	as wanted. 
\end{proof}

\begin{proposition}
	\label{PropositionEntropyConvexComb}
	For any partition $\mathcal P$ and any (not necessarily $f$-invariant) probability measure $\mu$,
	$$\frac{H({\mathcal P}_f^n, \mu)}{n} \leq \frac{1}{n} \sum_{i=0} ^{n-1} H( \mathcal P,  {f^*}^i\mu) \leq H \left( \mathcal P,  \frac{1}{n} \sum_{n=0}^{n-1} {f^*}^i\mu    \right).    $$ 
\end{proposition}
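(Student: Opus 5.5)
The first inequality was already obtained in Proposition \ref{Proposition H(P^n) H(P, f*mu)}: dividing the inequality there by $n$ gives
$$\frac{H({\mathcal P}_f^n,\mu)}{n}\leq \frac1n\sum_{i=0}^{n-1}H(\mathcal P,{f^*}^i\mu).$$
So the task reduces to the second inequality, which is a concavity statement. I read the upper limit of the sum in the statement as $i=n-1$; the index $n$ there is a typographical slip.

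I would prove the second inequality from the concavity of the function $\eta(t)=-t\log t$ on $[0,1]$, with the convention $\eta(0)=0$. This function is continuous on $[0,1]$ and strictly concave there, since $\eta''(t)=-1/t<0$ for $t>0$. Write $\nu_i={f^*}^i\mu$ and $\bar\nu=\frac1n\sum_{i=0}^{n-1}\nu_i$. Then, for each piece $P\in\mathcal P$,
$$\bar\nu(P)=\frac1n\sum_{i=0}^{n-1}\nu_i(P).$$

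Jensen's inequality for the concave function $\eta$ with equal weights $1/n$ then gives
$$\frac1n\sum_{i=0}^{n-1}\eta(\nu_i(P))\leq \eta\Big(\frac1n\sum_{i=0}^{n-1}\nu_i(P)\Big)=\eta(\bar\nu(P)).$$
Summing over the finitely many pieces $P\in\mathcal P$ and exchanging the two finite sums yields
$$\frac1n\sum_{i=0}^{n-1}H(\mathcal P,\nu_i)\leq H(\mathcal P,\bar\nu),$$
directly from Definition \ref{Definition H(P mu)}.

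There is no real obstacle. The only points to check are that Jensen's inequality applies at the endpoint $t=0$, which holds because $\eta$ is continuous and concave on the closed interval, and that no invariance of $\mu$ is used anywhere. Both inequalities therefore hold for an arbitrary Borel probability measure $\mu$, as the statement requires.
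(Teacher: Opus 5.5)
Your proposal is correct and follows essentially the same route as the paper. The first inequality is the earlier subadditivity bound $H({\mathcal P}_f^n,\mu)\leq\sum_{i=0}^{n-1}H(\mathcal P,{f^*}^i\mu)$ divided by $n$, and the second is the concavity of $-u\log u$ applied to each piece with equal weights $1/n$ and then summed over the pieces of $\mathcal P$. You also handle the endpoint $t=0$ and the misprinted summation index correctly.
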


\begin{proof}
	Consider the following  continuous real function $\phi: [0,1] \mathbb{R}$:
	$$\phi(u ) = -u\log u \mbox{ if } u \in(0,1], \ \ \phi(0)= 0.$$
	It is easy to check that $\phi$ is $C^{\infty}$ in $(0,1)$, and that $\phi''(u) <0$ for all $u \in (0,1)$.
	Therefore, the graph of $\phi$ is above the secant line. Thus, the value of $\phi $ at the convex combination of $n$ values $u_0, \ldots, u_{n-1} \in [0,1]$ (which is the ordinate of a point in the graph of $\phi$), is larger or equal than the convex combination of 
	$\phi(u_0), \ldots, \phi(u_{n-1})$ (which is the ordinate of a point in the secant line). Precisely, if $0<\lambda_i <1 $ for all $0\leq i\leq n-1$ and  $ \sum_{i=0} ^{n-1}\lambda _i =1$, then
	$$ \phi\left( \sum_{i=0} ^{n-1}\lambda _i u_i \right) \geq  \sum_{i=0} ^{n-1}\lambda _i \phi(u_i). $$
	Therefore
	$$ H \left( \mathcal P,  \frac{1}{n} \sum_{n=0}^{n-1} {f^*}^i\mu\right)  = \sum_{P \in \mathcal P}  \phi\left( \frac{1}{n} \sum_{i=0} ^{n-1} {f^*}^i\mu (P)  \right) \geq $$
	$$   \sum_{P \in \mathcal P}  \frac{1}{n} \sum_{i=0} ^{n-1} \phi ({f^*}^i\mu (P)) =   $$ 
	$$ \frac{1}{n} \sum_{i=0} ^{n-1} \sum_{P \in \mathcal P} \phi ({f^*}^i\mu (P)) =  
	\frac{1}{n} \sum_{i=0} ^{n-1} H( \mathcal P,  {f^*}^i\mu).	
	$$ 
	Finally, using Proposition \ref{Proposition H(P^n) H(P, f*mu)} the last expression is greater or equal than
	$(1/n)	H({\mathcal P}_f^n, \mu) $, as wanted.	
\end{proof}

\begin{proposition}  \label{PropositionH(P,mu)ContinuaEn mu}
	
	Let $\mathcal P$ be a (finite measurable) partition and $\mu$ a probability measure such that
	$$\mu(\partial P) =0.$$
	If $\{\mu_n\}_{n\geq 0}$ is a sequence of probabilities measures such that
	$$ {\lim}^*_{n\rightarrow + \infty} \ \mu_n = \mu,  $$
	then
	$$\lim_{n \rightarrow + \infty} H({\mathcal P}, \mu_n) = H({\mathcal P}, \mu).$$
	
\end{proposition}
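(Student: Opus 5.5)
The plan is to reduce the continuity of $\nu \mapsto H(\mathcal P,\nu)$ at $\mu$ to the continuity of each individual value $\nu(P)$, $P\in\mathcal P$, at $\mu$. The function $\phi(u)=-u\log u$, with $\phi(0)=0$, is continuous on $[0,1]$. We have $H(\mathcal P,\mu_n)=\sum_{P\in\mathcal P}\phi(\mu_n(P))$, and this is a finite sum because the partition has finitely many pieces. So it suffices to prove that
$$\lim_{n\to+\infty}\mu_n(P)=\mu(P)\quad\mbox{for every piece } P\in\mathcal P,$$
after which continuity of $\phi$ and finiteness of the sum give the conclusion. Here the hypothesis $\mu(\partial P)=0$ is read as $\mu(\partial\mathcal P)=0$, i.e.\ $\mu(\partial P)=0$ for each piece.

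For the convergence of $\mu_n(P)$ I will use the Portmanteau theorem for weak$^*$ convergence on the compact metric space $M$. I will prove the two needed inequalities directly from the definition of weak$^*$ convergence by Urysohn-type approximation. Let $\overline P$ be the closure and $\mbox{int}(P)$ the interior. For closed $F$, take $\varphi_k(x)=\max\{0,1-k\,\dist(x,F)\}$. These are continuous, satisfy $\varphi_k\ge \chi_F$, and decrease to $\chi_F$. Hence
$$\limsup_n \mu_n(F)\le \lim_n\int\varphi_k\,d\mu_n=\int\varphi_k\,d\mu \quad\mbox{for every } k,$$
and letting $k\to\infty$ with dominated convergence gives $\limsup_n\mu_n(F)\le\mu(F)$. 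Taking complements gives $\liminf_n\mu_n(U)\ge\mu(U)$ for open $U$.

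Applying these bounds to $F=\overline P$ and $U=\mbox{int}(P)$ gives
$$\mu(\mbox{int}P)\le\liminf_n\mu_n(P)\le\limsup_n\mu_n(P)\le\mu(\overline P).$$
Since $\overline P\setminus\mbox{int}P=\partial P$ has $\mu$-measure zero, the two extremes coincide with $\mu(P)$, and the limit exists and equals $\mu(P)$.

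No step is a real obstacle. The only care needed is the approximation argument giving the closed-set and open-set inequalities, and the observation that finiteness of $\mathcal P$ lets us pass the limit inside the sum.
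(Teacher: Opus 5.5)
Your proposal is correct and follows the paper's proof essentially step by step. The paper also proves the closed-set and open-set inequalities (its Lemma, parts (1)--(2)), sandwiches each piece between its interior and its closure using $\mu(\partial\mathcal P)=0$, and then passes the limit through the finite sum $\sum_{P}\phi(\mu_n(P))$ by continuity of $\phi$. The only cosmetic difference is in the closed-set inequality: the paper uses outer regularity plus a single Urysohn function, whereas you use the explicit decreasing approximants $\max\{0,1-k\,\dist(x,F)\}$ with dominated convergence, which is equally valid.
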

In other words, this proposition states the continuity at $\mu$ of the entropy of a partition $\mathcal P$ as a function of the measure $\mu$, if the boundary of the partition has zero $\mu$-measure.

To prove Proposition \ref{PropositionH(P,mu)ContinuaEn mu}, we will use the following lemma:

\begin{lemma}
	\label{Lemma}
	Let $\mu_n$, $\mu$ be probability measures such that 
	$${\lim}^*_{n \rightarrow + \infty} \ \mu_n = \mu$$

	\noindent {\em (1)} If $K \subset M$ is compact,  then $\limsup_{n \rightarrow + \infty} \mu_n (K) \leq \mu(K)$.
	
		\noindent {\em (2)} If $V \subset M$ is open,  then $\liminf_{n \rightarrow + \infty} \mu_n (V) \geq \mu(V)$.
		
			\noindent {\em (3)} If $A$ is a Borel set such that $\mu(\partial A) = 0$, then $$\lim_{n \rightarrow + \infty} \mu_n (A) = \mu(A).$$
	
\end{lemma}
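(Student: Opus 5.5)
This is the portmanteau theorem on the compact metric space $M$. I will prove (1) and (2) by approximating indicator functions with continuous functions, then combine them for (3).

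For (1), let $K$ be compact and $\epsilon>0$. Since $K=\bigcap_{k}\{x:\dist(x,K)<1/k\}$ is a decreasing intersection of open sets, continuity from above of $\mu$ gives an open set $U\supset K$ with $\mu(U)<\mu(K)+\epsilon$. The sets $K$ and $M\setminus U$ are disjoint and closed, so by Urysohn's lemma there is a continuous $\phi:M\to[0,1]$ with $\phi=1$ on $K$ and $\phi=0$ off $U$. Concretely one can take
$$\phi(x)=\max\{0,\,1-\dist(x,K)/r\},$$
where $r=\dist(K,M\setminus U)>0$. Then $\chi_K\le\phi\le\chi_U$, so
$$\mu_n(K)\le\int\phi\,d\mu_n\ \longrightarrow\ \int\phi\,d\mu\le\mu(U)<\mu(K)+\epsilon .$$
Hence $\limsup_{n\rightarrow+\infty}\mu_n(K)\le\mu(K)+\epsilon$, and letting $\epsilon\to0$ gives (1).

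For (2), let $V$ be open. Then $K=M\setminus V$ is closed in the compact space $M$, so it is compact. Since all the measures are probabilities, $\mu_n(V)=1-\mu_n(K)$ and $\mu(V)=1-\mu(K)$. Applying (1) to $K$ gives (2) directly.

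For (3), let $A$ be Borel with $\mu(\partial A)=0$. Write $\mathrm{int}(A)\subset A\subset\overline A$, where $\overline A$ is compact and $\mathrm{int}(A)$ is open. Since $\overline A\setminus\mathrm{int}(A)=\partial A$ has $\mu$-measure zero, we get $\mu(\mathrm{int}A)=\mu(A)=\mu(\overline A)$. Then (1) and (2) give
$$\mu(A)=\mu(\mathrm{int}A)\le\liminf_{n\rightarrow+\infty}\mu_n(\mathrm{int}A)\le\liminf_{n\rightarrow+\infty}\mu_n(A)\le\limsup_{n\rightarrow+\infty}\mu_n(A)\le\limsup_{n\rightarrow+\infty}\mu_n(\overline A)\le\mu(\overline A)=\mu(A),$$
so $\lim_{n\rightarrow+\infty}\mu_n(A)=\mu(A)$.

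The only step that needs any care is building the continuous function $\phi$ in (1). It comes from the metric and the compactness of $K$, which guarantee $r>0$. Everything after that is routine.
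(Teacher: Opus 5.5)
Your proof is correct and follows the paper's argument essentially step for step. Part (1) approximates $\chi_K$ from above by a continuous $\phi$ supported in an open $U\supset K$ with $\mu(U)<\mu(K)+\epsilon$, part (2) passes to complements, and part (3) sandwiches $A$ between $\mathrm{int}(A)$ and $\overline A$. The only difference is that you explicitly justify the existence of $U$ (continuity from above) and of $\phi$ (the explicit distance formula), which the paper simply asserts.
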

\begin{proof}
	\noindent {(1)} Let $\epsilon >0$ and $V \subset M$ such that $K \subset V$ and $\mu(V \setminus K)<\epsilon$. Let $\phi:M \mapsto [0,1]$ be a continuous function such that  $\phi|_K=1$ and $\phi|_{M \setminus V}=0$. Then
		$$\mu_n(K) \leq \int \phi d\mu_n  \ \ \forall \ n \geq 1.$$
		From the continuity of $\phi$, and the convergence in the weak$^*$ topology of $\mu_n$ to $\mu$, we obtain
		$$\int \phi d\mu_n \rightarrow \int \phi d\mu  = \int _{\phi \neq 0} \phi \, d \mu \leq \int _{V} \phi \, d\mu \leq \int _V 1 \, d \mu =$$ $$= \mu(V) = \mu(K)+\mu(V \setminus K)<\mu(K)+\epsilon.$$
		Luego $$\mu_n(K) \leq \int \phi \, d \mu_n <  \mu(K) + \epsilon \ \ \ \forall \ n \geq 1. $$   Now, taking $\limsup$ we obtain $$ \limsup_{n \rightarrow + \infty} \mu_n(K) \leq \mu(K) + \epsilon.$$
		Since the above inequality holds for all $\epsilon >0$, we conclude
		$$\limsup_{n \rightarrow + \infty} \mu_n(K) \leq \mu(K), $$
	as wanted.
	
		\noindent {(2)} Let $K=M \setminus V$. We have
		$$\mu_n(V)=1-\mu_n(K) \ \ \forall \ n \geq 1.$$
		Taking $\liminf$, we obtain
		$$\liminf_{n \rightarrow + \infty} \mu_n(V)=1-\limsup_{n \rightarrow + \infty} \mu_n(K). $$
	Since $K$ is a closed set in the compact metric space $M$, it is compact. Applying property (1), we conclude  $$\liminf_{n \rightarrow + \infty} \mu_n(V)=1-\limsup \mu_n(K) \geq 1-\mu(K)=\mu(V), $$
		as wanted.
		
		\noindent {(3)}  We consider the interior $\mbox{int}(A)$ of $A $ and its closure $\overline{A}$. Each one of these sets differs from $A$ i the boundary of $A $, which has zero $ \mu$-measure. Thus
		$$\mu(\mbox{int}(A))=\mu(A)=\mu(\overline{A}).$$ For $\mu_n$ we have $$\mu_n(\mbox{int}(A)) \leq \mu_n(A) \leq \mu_n(\overline{A}).$$
		Applying properties (1) and (2), we obtain
		$$\mu(\mbox{int}(A)) \leq \liminf_{n \rightarrow + \infty} \mu_n(\mbox{int}(A))  \leq \liminf_{n \rightarrow + \infty} \mu_n(A) \leq \limsup_{n \rightarrow + \infty} \mu_n(A) $$  $$ \leq \limsup_{n \rightarrow + \infty} \mu_n(\overline{A}) \leq \mu(\overline{A}).$$
		Since $\mu(\mbox{int}(A))=\mu(\overline{A}) = \mu(A)$, the last chain of inequalities is a chain of equalities. Therefore, $\limsup $ and  $\liminf$ of $\mu_n(A)$ coincide and are equal to $\mu(A)$. We conclude
		$$\lim_{n \rightarrow + \infty} \mu_n(A) = \mu(A),$$
	as wanted.
\end{proof}

\begin{proof} {\em of Proposition \ref{PropositionH(P,mu)ContinuaEn mu}}:
	
Consider the following continuous real function  $\phi:[0,1] \mapsto \mathbb{R}$:
	$$\phi(u)=- u \log u  \mbox{ if } 0 < u \leq 1, \ \ \phi(0)= 0.$$
	From Definition \ref{Definition H(P mu)} we have
	$$H(\mathcal{P},\mu)=\sum_{P \in \mathcal P} \phi(\mu(P)), \ \  H(\mathcal{P},\mu_n)=\sum_{P \in \mathcal P} \phi(\mu_n(P))    $$
	
	Since $\mu(\partial \mathcal{P}) = 0$ we can apply part (3) of Lemma \ref{Lemma}  to each piece $P \in {\mathcal P}$. Using also the continuity of the function $\phi$, we obtain
	
	$$\lim_{n \rightarrow + \infty} \phi(\mu_n(P)) = \phi(\mu(P) )\mbox{   } \ \ \forall \ P \in \mathcal P.$$
	
Finally we sum the above equality for all the finite number of pieces $P$ of the partition $\mathcal P$, concluding that	
	$$\lim_{n \rightarrow + \infty} H(\mathcal{P},\mu_n)= \lim_{n \rightarrow + \infty} \sum_{P \in \mathcal P}  \phi (\mu_n(P))= \sum_{P \in \mathcal P}  \lim_{n \rightarrow + \infty} \phi (\mu_n(P))=$$
	$$=\sum_{P \in \mathcal P}  \phi (\mu(P))=H(\mathcal{P},\mu),$$ as wanted.
\end{proof}

\subsection{The metric entropy for expansive maps.}

For expansive maps, Kolmogorov-Sinai Theorem (Theorem \ref{Theorem Kolm-Sinai} and Corollary \ref{CorollaryKolm-Sinai For Expansive maps} in this section) states there exists partitions that reach the  supremum in Definition \ref{DefinitionMetricEntropy}. In other words, the metric entropy of $f$ may be computed as the entropy of $f$ with respect to a concrete partition.

\begin{definition}	
	\em A partition ${\mathcal P} $ of $M$,  is a  \em generator \em (in the future) for $f:M \to M$  if the $\sigma$-algebra generated by $\displaystyle \left\{ \bigvee _{j =0}^{k} f^{-j} {\mathcal P} \right\}_{k\geq 0} $ is the Borel $\sigma$-algebra.
\end{definition}
Recall Definitions \ref{DefinitionEntropyOf f w.r.partition} and \ref{DefinitionMetricEntropy} of $ h_\mu(f, {\mathcal P})$ and  $h_\mu (f) $.
\begin{theorem}{\bf (Kolmogorov-Sinai)}
	\label{Theorem Kolm-Sinai}
	
	If the (finite measurable) partition ${\mathcal P} $ of $M$ is a generator for  $f:M \to M$, then for any $f$-invariant probability measure $\mu$:
	$$h_\mu (f) = h_\mu(f, {\mathcal P}).$$
\end{theorem}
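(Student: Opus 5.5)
We want $h_\mu(f)=h_\mu(f,\mathcal P)$ for a generator $\mathcal P$. Since $h_\mu(f)$ is a supremum over partitions, the inequality $h_\mu(f,\mathcal P)\le h_\mu(f)$ is immediate from Definition \ref{DefinitionMetricEntropy}. So the whole task is to show that for any finite measurable partition $\mathcal Q$,
$$h_\mu(f,\mathcal Q)\le h_\mu(f,\mathcal P).$$

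The approach is the classical one via conditional entropy. We introduce $H(\mathcal Q\mid\mathcal R,\mu)=H(\mathcal Q\vee\mathcal R,\mu)-H(\mathcal R,\mu)$ and use its standard properties (as in \cite{Viana-Oliveira-Book}, Section 9.1). The first is the inequality
$$h_\mu(f,\mathcal Q)\le h_\mu(f,\mathcal R)+H(\mathcal Q\mid\mathcal R,\mu).$$
This follows from $H(\mathcal Q^n_f)\le H(\mathcal Q^n_f\vee\mathcal R^n_f)=H(\mathcal R^n_f)+H(\mathcal Q^n_f\mid\mathcal R^n_f)$. One then applies subadditivity of conditional entropy and invariance of $\mu$, which give $H(\mathcal Q^n_f\mid\mathcal R^n_f)\le nH(\mathcal Q\mid\mathcal R)$, and divides by $n$.

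We apply this with $\mathcal R=\mathcal P^k_f$ and note that $h_\mu(f,\mathcal P^k_f)=h_\mu(f,\mathcal P)$. This holds because $(\mathcal P^k_f)^n_f=\mathcal P^{n+k-1}_f$, and $(n+k-1)/n\to1$. We thereby obtain
$$h_\mu(f,\mathcal Q)\le h_\mu(f,\mathcal P)+H(\mathcal Q\mid\mathcal P^k_f,\mu)\qquad\text{for every }k.$$

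It remains to prove $H(\mathcal Q\mid\mathcal P^k_f,\mu)\to0$ as $k\to\infty$, and this is the main obstacle. Here the generator hypothesis enters. The union $\bigcup_k\sigma(\mathcal P^k_f)$ is an increasing algebra generating the Borel $\sigma$-algebra. By the approximation theorem for measures, each $Q\in\mathcal Q$ can therefore be approximated in $\mu$-measure by a union $A_k(Q)$ of atoms of $\mathcal P^k_f$, for $k$ large. I would handle this in two steps.

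First, using those approximations, I build a partition $\mathcal Q'_k\prec\mathcal P^k_f$ with the same number of pieces as $\mathcal Q$ and with $\mu(Q\triangle Q'_k)$ small. This requires disjointifying the sets $A_k(Q)$ and assigning the leftover set to one piece.

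Second, I invoke the continuity estimate: if $\mathcal Q,\mathcal Q'$ have $p$ pieces and $\sum\mu(Q\triangle Q')<\epsilon$, then $H(\mathcal Q\mid\mathcal Q')\le\delta(\epsilon,p)$ with $\delta\to0$ as $\epsilon\to0$. This is proved using the concavity of $\phi(u)=-u\log u$, as in Proposition \ref{PropositionEntropyConvexComb}, together with Proposition \ref{PropositionH leq log p} applied to the off-diagonal mass.

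Since $\mathcal Q'_k$ is coarser than $\mathcal P^k_f$, monotonicity of conditional entropy gives
$$H(\mathcal Q\mid\mathcal P^k_f)\le H(\mathcal Q\mid\mathcal Q'_k)\to0.$$
Alternatively, one may cite the martingale convergence theorem to get $H(\mathcal Q\mid\mathcal P^k_f)\downarrow H(\mathcal Q\mid\mathcal B)=0$, where $\mathcal B$ is the Borel $\sigma$-algebra. In either form, letting $k\to\infty$ yields $h_\mu(f,\mathcal Q)\le h_\mu(f,\mathcal P)$ for every $\mathcal Q$, and hence the theorem.
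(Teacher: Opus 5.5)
Your proposal is correct. It is the classical conditional-entropy proof of the Kolmogorov--Sinai theorem: first the bound $h_\mu(f,\mathcal Q)\le h_\mu(f,\mathcal P^k_f)+H(\mathcal Q\mid\mathcal P^k_f,\mu)$ with $h_\mu(f,\mathcal P^k_f)=h_\mu(f,\mathcal P)$, and then $H(\mathcal Q\mid\mathcal P^k_f,\mu)\to 0$ from the generator hypothesis. The paper gives no argument of its own and only cites Keller's book (Theorem 3.2.18), where essentially this standard proof appears, so your route matches what the paper relies on.
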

\begin{proof}
	See for instance %\cite{Walters-Book}, Theorem 9.2.1, or 
	\cite{kellerBook}, Theorem 3.2.18.
\end{proof}
Recall Definition \ref{DefinitionExpansive} of expansiveness in the future of a map. 
\begin{corollary}
	\label{CorollaryKolm-Sinai For Expansive maps}
	If $f: M \mapsto M$ is expansive in the future with expansivity constant $\alpha$, and $\mathcal P$ is a partition with $\mbox{diam}(\mathcal P) <\alpha$ then  for any $f$-invariant probability measure $\mu$:
	$$h_\mu (f) = h(f, {\mathcal P})= \lim_{n\rightarrow +\infty} \frac{H({\mathcal P}^n_f, \mu)}{n}.$$
\end{corollary}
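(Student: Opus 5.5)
The plan is to show that $\mathcal P$ is a generator in the sense of the preceding definition, and then to invoke Theorem \ref{Theorem Kolm-Sinai} together with Definition \ref{DefinitionEntropyOf f w.r.partition}. The second equality in the statement is just the definition of $h_\mu(f,\mathcal P)$ as a limit, so all the work is in the generator property.

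First I fix a piece of notation. Let $\mathcal B$ denote the $\sigma$-algebra generated by all the pieces of the partitions $\mathcal P^{k+1}_f=\bigvee_{j=0}^{k} f^{-j}\mathcal P$, $k\ge 0$. Since every piece is measurable, $\mathcal B$ is contained in the Borel $\sigma$-algebra, so only the reverse inclusion needs proof. For $x\in M$, let $P_j(x)$ be the piece of $\mathcal P$ containing $f^j(x)$. Then define
$$A(x):=\bigcap_{j\ge 0} f^{-j}(P_j(x)).$$
If $y\in A(x)$, then $f^j(x)$ and $f^j(y)$ lie in the same piece of $\mathcal P$ for every $j\ge 0$. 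Because $\mathrm{diam}(\mathcal P)<\alpha$, this gives $\dist(f^j(x),f^j(y))<\alpha$ for all $j\ge0$, and expansiveness (Definition \ref{DefinitionExpansive}) then forces $y=x$. Hence $A(x)=\{x\}$, so the atoms of $\mathcal B$ are single points.

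Next I turn this pointwise separation into the statement that open sets lie in $\mathcal B$. Let $A_k(x)$ be the piece of $\mathcal P^{k}_f$ containing $x$. I claim that $\mathrm{diam}(A_k(x))\to 0$ uniformly in $x$. Suppose not. Then there are $\epsilon>0$, $k_i\to\infty$, and points $x_i,y_i$ in the same piece of $\mathcal P^{k_i}_f$ with $\dist(x_i,y_i)\ge\epsilon$. Pass to limits $x_i\to x$, $y_i\to y$ with $\dist(x,y)\ge\epsilon$. For each fixed $j$ and all large $i$ we have $\dist(f^j(x_i),f^j(y_i))\le \mathrm{diam}(\mathcal P)$. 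By continuity of $f^j$, this gives $\dist(f^j(x),f^j(y))\le\mathrm{diam}(\mathcal P)<\alpha$ for every $j$, and expansiveness yields $x=y$, a contradiction. The non-strict inequality is harmless precisely because $\mathrm{diam}(\mathcal P)<\alpha$ strictly.

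With the uniform shrinking in hand, I take any open $V$. Then $V=\bigcup_k\bigcup\{A\in\mathcal P^k_f: A\subset V\}$, because every $x\in V$ has $A_k(x)\subset V$ once the diameter is smaller than $\dist(x,M\setminus V)$. This is a countable union of sets in $\mathcal B$, since each $\mathcal P^k_f$ is finite. Hence $V\in\mathcal B$, so $\mathcal B$ is the Borel $\sigma$-algebra and $\mathcal P$ is a generator. Theorem \ref{Theorem Kolm-Sinai} then gives $h_\mu(f)=h_\mu(f,\mathcal P)=\lim_n H(\mathcal P^n_f,\mu)/n$.

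I expect the main obstacle to be the uniform shrinking of the diameters. Pointwise separation of orbits is not enough on its own; it is the compactness argument above that upgrades it to the uniform statement, and that argument is where the strict inequality $\mathrm{diam}(\mathcal P)<\alpha$ is actually used.
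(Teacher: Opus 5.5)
Your proposal is correct and is essentially the paper's proof: reduce to the generator property via Theorem~\ref{Theorem Kolm-Sinai}, show by the same compactness-plus-expansiveness contradiction that the pieces of $\bigvee_{i=0}^{k} f^{-i}\mathcal P$ shrink uniformly (the paper's Assertion A), and then write every open set as a countable union of such pieces. Your preliminary step $A(x)=\{x\}$ is redundant, since the uniform-shrinking argument already contains it; also, because the paper's Definition~\ref{DefinitionExpansive} uses $\dist \le \alpha$, the limit argument would go through even with $\mathrm{diam}(\mathcal P)\le\alpha$, so the strict inequality is not essential.
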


\begin{proof}
	
	Applying Theorem \ref{Theorem Kolm-Sinai} it is enough to prove that  $\mathcal{P}$ is a generator for $f$.

	Let $k \geq 0$ and for each point $x \in M$ consider the piece $A_k(x) $ of the partition $\bigvee^k_{i=0} f^{-i} \mathcal{P}$ that contains $x$. We denote by $B_\delta(x)$ the ball centered at $x$ with radius $\delta<0$. 
	We will first prove the following statement:

	\noindent {\bf Assertion A. }
\em 	For all $0<\delta<\alpha$ there exists $k=k(\delta)$ such that	 \em 
	\begin{equation}
		\label{EquationA_k subset B}
		A_k(x) \subset B_{\delta} (x)\ \ \ \forall \  x\in M.
	\end{equation}

Suppose for a contradiction that there is $\delta \in (0, \alpha)$ such that for all $k \geq 0$ there exist points $x_k, y_k \in  M$ satisfying
$$y_k \in A_k(x_k) \setminus B_\delta(x_k) \ \ \forall \ k \geq 0.$$
Since $M$ is compact, there are subsequences $\{x_{k_j}\}_j $ and  $ \{y_{k_j}\}_j   $ convergent to the points $x$ and $y$ respectively. On the one hand, as $y_k \not \in B_\delta(x_k)$ we have
\begin{equation}
	\label{equality>delta}
	\dist(x,y) = \lim_{j\rightarrow + \infty} \dist(x_{k_j},  y_{k_j}  ) \geq \delta.
\end{equation}
On the other hand, as $y_k \in A_k(x_k) \in \bigvee_{i=0}^k f^{-i} \mathcal P   $ and $\mbox{diam}(\mathcal P) < \alpha$, we obtain
$$\dist(f^i(x_{k_j}),   f^i(y_{k_j})   )  < \alpha \ \ \forall \ 0 \leq i \leq k_j.  $$
Taking the limit in the inequality above when $j \rightarrow +\infty$ with $i$ fixed, we deduce $$\dist(f^i(x),   f^i(y)   )  \leq \alpha \ \ \forall \ i \geq 0.  $$
Due to the expansiveness in the future of $f$, the inequality above implies $x=y$ contradicting inequality (\ref{equality>delta}), and ending the proof of Assertion A.

Any open set $V \subset M$ can be written as the union of  open balls $B_{\delta (x)}(x)\subset V$ with $\delta (x)< \alpha$. Using Assertion A, each point $x$ of $V$ is inside a piece  $A_{k(x)}(x)\subset B_{\delta }(x)$ for some $ k(x) \geq 0$. Therefore $V$ is the union of a family of  pieces    $$A_{k(x)}(x) \in \bigcup_{k \geq 0} \left\{ \bigvee _{i =0}^{k} f^{-i} {\mathcal P}: \ k \geq 0  \right\}.$$ Since the family of all such pieces  is countable (because they are the pieces of a countable union of finite partitions), we deduce that the $\sigma$-algebra generated by them includes all the open subsets of $M$. Thus, it includes the Borel $\sigma$-algebra.   Conversely,  all the pieces $A_k(x)$ are Borel measurable sets. Therefore, the  $\sigma$-algebra generated by them is included in the  Borel $\sigma$-algebra. We conclude that both  $\sigma$-algebras coincide, as wanted.
\end{proof}

\section{Proof  of Theorem \ref{MainTheorem}.} \label{Section4-ProofOfMainTheorem}

To prove Theorem \ref{MainTheorem}, we will fix some notation:

\noindent For the $C^1$ expanding map $f:M \mapsto M$ denote
 \begin{equation}
	\label{Equality Def psi} 
	\psi (x) := - \log |\mbox{det}Df_x| < 0, \ \ \forall \ x \in M.
\end{equation}
Recall that $\psi: M \to  \mathbb{R}$ is the potential in the statement of Theorem \ref{MainTheorem}.

 In the space ${\mathcal M}$  of  Borel probability measures on the manifold $M$, we fix the following weak$^*$ metric:

\begin{equation}
	\label{EqualityDef weak^*distance}
	\mbox{dist}^*(\mu, \nu) := \sum_{i= 0}^{+ \infty} \frac{1}{2^i} \; \left |\int \phi_i \, d \mu - \int \phi_i \, d \nu\right |, \end{equation}
where $\phi_{\,0}:= \psi$ and $\{\phi_i\}_{i \geq 1}$ is a countable family of continuous functions   that is dense in the space $C^0(M, [0,1])$.

	\begin{lemma}
		\label{LemmaBolasConvexas}
		 For any $\mu \in {\mathcal M}$ and any $\epsilon >0$ the ball ${\mathcal B} := \{\nu \in {\mathcal M}: \ \mbox{dist}^* (\mu, \nu) < \epsilon\}$   is convex.
	\end{lemma}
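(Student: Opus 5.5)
The plan is to show directly that the distance function $\nu \mapsto \mbox{dist}^*(\mu,\nu)$ is a convex function on $\mathcal M$. Once this is established, the sublevel set $\mathcal B$ is convex immediately. The key point is that each term in the series (\ref{EqualityDef weak^*distance}) depends on $\nu$ only through the linear functional $\nu \mapsto \int \phi_i \, d\nu$. The absolute value composed with an affine map is convex, and a series of convex functions with nonnegative coefficients is again convex.

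Concretely, take $\nu_1, \nu_2 \in \mathcal B$ and $t \in [0,1]$, and set $\nu = t\nu_1 + (1-t)\nu_2$. Then $\nu$ is again a Borel probability measure, so $\nu \in \mathcal M$. For each $i \geq 0$, linearity of the integral in the measure gives
$$\int \phi_i \, d\mu - \int \phi_i \, d\nu = t\left(\int \phi_i \, d\mu - \int \phi_i \, d\nu_1\right) + (1-t)\left(\int \phi_i \, d\mu - \int \phi_i \, d\nu_2\right).$$
Here we write $\int \phi_i\, d\mu$ as $t\int\phi_i\,d\mu + (1-t)\int\phi_i\,d\mu$. The triangle inequality then bounds the absolute value of this quantity by the corresponding convex combination of absolute values.

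Next, multiply by $2^{-i}$ and sum over $i$. This step requires that all the series converge, so that rearranging and summing termwise is legitimate. For $i \geq 1$, convergence holds because $\phi_i$ takes values in $[0,1]$, so each term is at most $2^{-i}$. The term $i=0$ is a single finite number, since $\psi$ is continuous on the compact manifold $M$ (Equality (\ref{Equality Def psi})). Summing gives
$$\mbox{dist}^*(\mu,\nu) \leq t\,\mbox{dist}^*(\mu,\nu_1) + (1-t)\,\mbox{dist}^*(\mu,\nu_2) < t\epsilon + (1-t)\epsilon = \epsilon,$$
so $\nu \in \mathcal B$.

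There is no real obstacle in this argument. The only points needing care are the absolute convergence of the series, which justifies termwise summation, and the observation that convex combinations of probability measures stay in $\mathcal M$.
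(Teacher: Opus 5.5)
Your proposal is correct and follows essentially the same argument as the paper: write $\int \phi_i \, d\mu$ as the corresponding convex combination, apply the triangle inequality term by term, and sum with weights $2^{-i}$. Your remark on the absolute convergence of the series (each $\phi_i$ with $i \geq 1$ takes values in $[0,1]$, and $\psi$ is bounded on the compact manifold $M$) is a small justification that the paper leaves implicit.
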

	
	\begin{proof}
		
		Let $\nu_1$, $\nu_2\in \mathcal{B}$.
		We will prove that if $\lambda_1,\lambda_2 \in [0,1]$ are such that $\lambda_1 + \lambda_2 =1$, then $\lambda_1 \nu_1 + \lambda_2 \nu_2 \in \mathcal{B}.$

		Using the triangle inequality, we have
		$$\mbox{dist} (\lambda_1 \nu_1 +\lambda_2 \nu_2, \mu)=\sum^{+\infty}_{i=0} \frac{1}{2^i}\left| \int \phi_i d\mu - \int \phi_i d(\lambda_1 \nu_1 +\lambda_2 \nu_2) \right| $$ $$\leq \sum^{+\infty}_{i=0} \frac{1}{2^i}
		 \left | \lambda_1 \int \phi_i d\mu - \lambda_1 \int \phi_i d\nu_1 \right| + \sum^{+\infty}_{i=0} \frac{1}{2^i} 
		 \left | \lambda_2 \int \phi_i d\mu - \lambda_2 \int \phi_i d\nu_2 \right | = $$ $$  = \lambda_1 \sum^{+\infty}_{i=0} \frac{1}{2^i} \left| \int \phi_i d\mu - \phi_i d \nu_1 \right | + \lambda_2 \sum^{+\infty}_{i=0} \frac{1}{2^i} \left| \int \phi_i d\mu - \phi_i d \nu_2 \right | <$$ $$ (\lambda_1 + \lambda_2) \epsilon = \epsilon,$$
		as wanted.		
\end{proof}

We state and prove now a series of lemmas that we will use in the proof of Theorem \ref{MainTheorem}.

Recall  Definition \ref{Definition H(P mu)} of the entropy $H({\mathcal P}, \mu)$ of a partition with respect to a probability measure $\mu$, and the equality (\ref{Equality Defining P^n}) defining the product partition ${\mathcal P}^n_f$.
\begin{lemma} \label{lemma0} 
	
	Let $\mu$ be an $f$-invariant probability measure. Let ${\mathcal P}$ be any finite partition of the manifold $M$ into measurable sets. If  $\mu(\partial {\mathcal P}) = 0,$  then, for all $\epsilon > 0$, and for all $q \geq 1$ there exists $\epsilon^* > 0$ such that \begin{equation} \label{eqn22}
		\Big|\frac{H({\mathcal P}_f^q, \rho)}{q} - \frac{H({\mathcal P}_f^q, \mu)}{q} \Big| < \epsilon\end{equation} for any probability measure $\rho$ such that
	$\mbox{dist}(\rho, \mu) < \epsilon^*.$
\end{lemma}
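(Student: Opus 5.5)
Fix $\epsilon>0$ and $q\geq 1$. The idea is to show that $\rho\mapsto H({\mathcal P}_f^q,\rho)$ is continuous at $\mu$ with respect to $\mbox{dist}^*$, via Proposition \ref{PropositionH(P,mu)ContinuaEn mu} applied to the partition ${\mathcal P}_f^q$ instead of $\mathcal P$. For that I need the boundary of ${\mathcal P}_f^q$ to have zero $\mu$-measure.

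First step: the boundary of a piece $P_0\cap f^{-1}P_1\cap\cdots\cap f^{-(q-1)}P_{q-1}$ is contained in $\bigcup_{i=0}^{q-1}\partial(f^{-i}P_i)$. Since $f$ is continuous, $f^{-i}$ of an open set is open and $f^{-i}$ of a closed set is closed. Hence
$$\overline{f^{-i}P}\subset f^{-i}\overline P \quad\mbox{and}\quad \mbox{int}(f^{-i}P)\supset f^{-i}\mbox{int}(P),$$
which gives $\partial(f^{-i}P)\subset f^{-i}(\partial P)$. Therefore
$$\partial {\mathcal P}_f^q\subset\bigcup_{i=0}^{q-1}f^{-i}(\partial\mathcal P),$$
and by $f$-invariance of $\mu$,
$$\mu(\partial{\mathcal P}_f^q)\leq \sum_{i=0}^{q-1}\mu(f^{-i}\partial\mathcal P)=q\,\mu(\partial\mathcal P)=0.$$
This is the only place where invariance is used, and it is the main point to verify carefully.

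Second step: argue by contradiction. If no $\epsilon^*$ works, then there are $\rho_n$ with $\mbox{dist}^*(\rho_n,\mu)<1/n$ but
$$\left|H({\mathcal P}_f^q,\rho_n)-H({\mathcal P}_f^q,\mu)\right|\geq q\epsilon.$$
Since $\mbox{dist}^*$ is a metric inducing the weak$^*$ topology, ${\lim}^*\rho_n=\mu$. Proposition \ref{PropositionH(P,mu)ContinuaEn mu}, applied to the finite partition ${\mathcal P}_f^q$ whose boundary is $\mu$-null, gives $H({\mathcal P}_f^q,\rho_n)\to H({\mathcal P}_f^q,\mu)$, a contradiction. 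Dividing by $q$ gives (\ref{eqn22}).

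A minor point is that $\mbox{dist}^*$ as defined in (\ref{EqualityDef weak^*distance}) indeed metrizes weak$^*$ convergence; the paper takes this for granted. It holds because $\{\phi_i\}$ is dense in $C^0(M,[0,1])$ and $\psi$ is bounded and continuous. If one prefers to avoid sequences, one can argue directly: for each of the finitely many pieces $A$, use Lemma \ref{Lemma}(3) to find a neighborhood of $\mu$ on which $\rho(A)$ is close to $\mu(A)$, then use uniform continuity of $u\mapsto -u\log u$ on $[0,1]$. The sequential argument is simpler.
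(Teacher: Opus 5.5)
Your proof is correct and follows the paper's argument. You argue by contradiction with a sequence $\rho_m\to\mu$ in the weak$^*$ topology and apply the continuity of $\rho\mapsto H({\mathcal P}_f^q,\rho)$ at a measure giving zero mass to $\partial{\mathcal P}_f^q$. The paper merely asserts that $\mu(\partial{\mathcal P}_f^q)=0$ follows from $f$-invariance; you additionally prove the inclusion $\partial{\mathcal P}_f^q\subset\bigcup_{i=0}^{q-1}f^{-i}(\partial\mathcal P)$ from the continuity of $f$, which fills in that omitted step.
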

\begin{proof}
	Assume by contradiction that for all $\epsilon^* >0$ there exists a probability measure $\rho$, whose distance to $\mu$ is smaller than $\epsilon^*$, and that does not satisfy inequality (\ref{eqn22}). Thus, in particular for $\epsilon^*= 1/m$  where $m \in \mathbb{N}$, there exists $\rho_m$  such that
	$$\mbox{dist}(\rho_m, \mu) < \frac{1}{m},$$
	\begin{equation} \label{eqn23}\Big|\frac{H({\mathcal P}_f^q, \rho_m)}{q} - \frac{H({\mathcal P}_f^q, \mu)}{q} \Big| \geq \epsilon  \ \ \forall  \ m \geq 1.\end{equation}
	We have $\lim_{m \rightarrow + \infty} \rho_m = \mu$ in the weak$^*$ topology and $\mu(\partial {\mathcal P}) = 0$. Note that  $\mu(\partial {\mathcal P}_f^q)= 0$ because $\mu$ is $f$-invariant. So,  applying part (iii) of Lemma \ref{Lemma},  $\lim_{m \rightarrow + \infty} \rho_m (Y) = \mu(Y)$ for any piece $Y \in {\mathcal P}_f^q$. And, from Proposition \ref{PropositionH(P,mu)ContinuaEn mu}, for fixed $q \geq 1$, we have:
	$$\lim _{m \rightarrow + \infty} \frac{H({\mathcal P}_f^q, \rho_m)}{q} = \frac{H({\mathcal P}_f^q, \mu)}{q},$$
	contradicting inequality (\ref{eqn23}).
\end{proof}

Recall Definition \ref{DefinitionExpansive} of expansiveness, and Proposition \ref{PropositionExpandingIsExpansive}. Let $\alpha >0$ be an expansivity constant for $f$. From Corollary \ref{CorollaryKolm-Sinai For Expansive maps} of Kolmogorov-Sinai Theorem for expansive maps, for any $f$-invariant probability measure $\mu$ we have:
\begin{equation}
	\label{equationEntropy}
	h_{\mu} (f)= \lim_{q \rightarrow + \infty} \frac{H({\mathcal P}_f^q , \mu)}{q} \ \ \mbox{ if } \ \  \mbox{diam}({\mathcal P}) < \alpha.
\end{equation}

\begin{lemma} 	
	\label{Lemma1} Let $f: M \mapsto M $ be a $C^1$ expanding map on the compact manifold $M$. Let $\alpha >0$ be an expansivity constant for $f$.
	
	For all $0 < \delta < \alpha$, for all $\epsilon > 0$, and for any $f$-invariant measure $\mu$, there exists a finite partition ${\mathcal P}$ of $M$,  a real number $\epsilon^* >0$, and a natural number $n_0 \geq 1$, such that:
	
	\noindent \em (i)   \em  $\mbox{\em diam}({\mathcal P}) < \delta < \alpha  $,
	
	\noindent \em (ii) \em $\mu (\partial {\mathcal P} ) = 0$,
	
	\noindent \em (iii) \em For any sequence of non necessarily invariant probabilities $\nu_n$,
	if    $\mu_n:= \frac{1}{n} \sum_ {j= 0}^{n-1} (f^j)^* \nu_n,$  and if
	$\mbox{\em dist} (\mu_n, \mu) < \epsilon^* \ \ \forall \ n \geq 1,$
	then
	$$ \frac{1}{n } H({\mathcal P}_f^{n }, \nu_{n }) \leq h_{\mu}(f) + \epsilon \ \ \ \forall \ n \geq n_0.$$

\end{lemma}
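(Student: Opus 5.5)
Construct the partition first. For each $x\in M$ the spheres $\partial B_r(x)$, $0<r<\delta/2$, are pairwise disjoint, so at most countably many have positive $\mu$-measure. Choose a radius $r_x\in(0,\delta/2)$ with $\mu(\partial B_{r_x}(x))=0$. By compactness, finitely many balls $B_1,\dots,B_k$ of this kind cover $M$. Define the partition $\mathcal P$ with pieces $B_1$, $B_2\setminus B_1$, \dots, $B_k\setminus(B_1\cup\dots\cup B_{k-1})$, discarding empty pieces. Each piece has diameter less than $\delta$, which gives (i). The boundary $\partial\mathcal P$ is contained in $\bigcup_i\partial B_i$, which has $\mu$-measure zero, giving (ii). Since $\mu$ is invariant, $\mu(\partial\mathcal P^q_f)=0$ for every $q$. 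By (\ref{equationEntropy}) we can fix $q$ with $\frac1q H(\mathcal P^q_f,\mu)<h_\mu(f)+\epsilon/3$. Then Lemma \ref{lemma0}, applied with $\epsilon/3$ and this $q$, yields $\epsilon^*$ such that $\mathrm{dist}(\rho,\mu)<\epsilon^*$ implies $\frac1q H(\mathcal P^q_f,\rho)<h_\mu(f)+2\epsilon/3$.

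For (iii) I use the standard Misiurewicz splitting argument from the proof of the variational principle. Fix $n> 2q$. For each $0\le j<q$, write
$$\{0,\dots,n-1\}=\{0,\dots,j-1\}\cup\bigcup_{t=0}^{a(j)-1}\{j+tq,\dots,j+tq+q-1\}\cup R_j ,$$
where $a(j)=\lfloor (n-j)/q\rfloor$ and $|R_j|\le q$. Let $p$ be the number of pieces of $\mathcal P$. Propositions \ref{Proposition H(P vee Q)} and \ref{PropositionH leq log p} give
$$H(\mathcal P^n_f,\nu_n)\le \sum_{t}H\bigl(f^{-(j+tq)}\mathcal P^q_f,\nu_n\bigr)+2q\log p=\sum_t H\bigl(\mathcal P^q_f,(f^*)^{j+tq}\nu_n\bigr)+2q\log p.$$
Summing over $j=0,\dots,q-1$, every index $i=j+tq\le n-1$ appears at most once. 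Hence
$$qH(\mathcal P^n_f,\nu_n)\le \sum_{i=0}^{n-1}H\bigl(\mathcal P^q_f,(f^*)^i\nu_n\bigr)+2q^2\log p .$$
Concavity of $u\mapsto -u\log u$, as in the proof of Proposition \ref{PropositionEntropyConvexComb}, bounds the sum by $nH(\mathcal P^q_f,\mu_n)$. Dividing by $nq$,
$$\frac1nH(\mathcal P^n_f,\nu_n)\le \frac1q H(\mathcal P^q_f,\mu_n)+\frac{2q\log p}{n}<h_\mu(f)+\frac{2\epsilon}3+\frac{2q\log p}{n},$$
using $\mathrm{dist}(\mu_n,\mu)<\epsilon^*$. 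Choosing $n_0>2q$ with $2q\log p/n_0<\epsilon/3$ finishes the proof.

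I expect the main obstacle to be the bookkeeping in the splitting step. One has to check that, for each fixed $j$, the blocks are disjoint and that the leftover indices at the start and end number at most $2q$. The identity $H(f^{-s}\mathcal Q,\nu)=H(\mathcal Q,(f^*)^s\nu)$ must be applied with $\mathcal Q=\mathcal P^q_f$, exactly as in Proposition \ref{Proposition H(P^n) H(P, f*mu)}. A further subtlety is that $\mu_n$ need not be invariant. This causes no problem, because Lemma \ref{lemma0} only requires $\rho$ to be close to $\mu$, and the continuity argument needs $\mu(\partial\mathcal P^q_f)=0$ only for the invariant limit $\mu$. Expansiveness is used only through (\ref{equationEntropy}), via the choice $\mathrm{diam}(\mathcal P)<\alpha$.
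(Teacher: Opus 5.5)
Your proposal is correct and follows essentially the same route as the paper. It uses the same ball partition with $\mu$-null boundaries, the same Misiurewicz block splitting leading to $qH(\mathcal P^n_f,\nu_n)\le 2q^2\log p+\sum_{i=0}^{n-1}H(\mathcal P^q_f,(f^*)^i\nu_n)$, then concavity, the choice of $q$ via Kolmogorov--Sinai, and the choice of $\epsilon^*$ via Lemma \ref{lemma0}. Your offset-by-offset bookkeeping is only a more direct derivation of that same inequality than the paper's detour through $H(f^{-l}\mathcal P^n_f,\nu)$ and $\mathcal P^{n+l}_f$.
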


\begin{proof} Recall Equality (\ref{Equality Defining P^n}) defining the product partition ${\mathcal P}_f^{n }  $ and Definition \ref{DefinitionMetricEntropy} of the metric entropy $h_{\mu} (f)$ of $f$. For simplicity along this proof, since we will not change the map $f$,  we will denote 	$h_{\mu} $ instead of $ 	h_{\mu} (f)$ and
${\mathcal P}^{n }  $ instead of ${\mathcal P}_f^{n }  $.	

 Take any finite covering ${\mathcal U} = \{Y_1, \ldots, Y_p\}$ of $M$ with open balls with radia smaller than $\delta/2$.  Denote $\partial{\mathcal U} := \bigcup_{i=1} ^p \partial Y_i  $.  Since the family of boundaries of the balls with radius $r>0$ is non countable when changing $r$,  but these boundaries can have positive $\mu$-measure only for at most a countable subfamily, the radius of each ball $Y_i \in {\mathcal U}$ can be chosen such that $\mu(\partial Y_i) = 0$. Thus $\mu (\partial {\mathcal U}) = 0$.    Therefore, the partition ${\mathcal P} = \{X_i\}_{1 \leq i \leq p}$ defined by $X_1 := Y_1 \in {\mathcal U} $, \  $X_{i+1} :=  Y_{i+1} \setminus (\cup_{j= 1}^i X_i)$, satisfies the assertions (i) and (ii).

To end the proof, for any given $\epsilon > 0$ let us find $\epsilon^* > 0$ and $n_0 \geq 1$ such that assertion (iii) holds.

Let us fix two integer numbers $q \geq 1$   and $n \geq q$. Write $n= N q + j$ where $N, j$ are integer numbers such that $ 0 \leq j \leq q-1$ Fix a (non necessarily invariant) probability $\nu$. Applying Propositions \ref{Proposition H(P vee Q)} and \ref{Proposition H(P^n) H(P, f*mu)}, we obtain:

$$H({\mathcal P}^n, \nu) = H({\mathcal P}^{ Nq+ j }, \nu)   \leq $$ $$ H( \vee_{i= 0}^{j-1}  f^{-(Nq+i)}{\mathcal P}, \nu) + H({\vee_{i= 0}^{N-1}f^{-iq}\mathcal P}^{q }, \nu)  \leq $$
$$\sum_{i= 0}^{j-1} H(f^{-(Nq+i)} {\mathcal P}, \nu) + \sum_{i= 0}^{N-1} H(f^{-iq}{\mathcal P}^{q },  \nu) = \sum_{i= 0}^{j-1} H({\mathcal P} , (f^{Nq+i})^* \nu) + \sum_{i= 0}^{N-1}H({\mathcal P}^{q }, (f^{iq})^*\nu).   $$

%\vspace{-.7cm}

From the above inequality, using Proposition \ref{PropositionH leq log p}, and recalling that $j \leq q-1<q$, we obtain
$$ H({\mathcal P}^n, \nu) \leq q \log p + \sum_{i= 0}^{N-1} H({\mathcal P}^{q }, (f^{iq})^*\nu) \ \ \forall \ q \geq 1, \ \ n \geq q,  $$
where $p$ is the number of pieces of the partition $\mathcal P$.

 The inequality above holds also for $f^{-l}{\mathcal P}$ instead of ${\mathcal P}$, for any $l \geq 0$, because it holds for any partition with exactly $p$ pieces.  Thus:
$$\ H(f^{-l}{\mathcal P}^n, \nu) \leq q \log p + \sum_{i= 0}^{N-1}H(f^{-l}{\mathcal P}^{q }, (f^{iq})^*\nu) =$$
$$ q \log p + \sum_{i= 0}^{N-1} H({\mathcal P}^{q }, (f^{iq + l})^*\nu). $$

\noindent Adding the above inequalities   for $0 \leq l \leq q-1$, we obtain:
$$\sum_{l= 0}^{q-1} H(f^{-l}{\mathcal P}^n, \nu) \leq  q^2 \log p + \sum_{l= 0}^{q-1} \sum_{i= 0}^{N-1} H({\mathcal P}^{q }, (f^{iq + l})^*\nu) $$
Therefore,  on the one hand we have:
\begin{equation} \label{equation0} \sum_{l= 0}^{q-1} H(f^{-l}{\mathcal P}^n, \nu) \leq q^2 \log p + \sum_{i= 0}^{Nq -1} H({\mathcal P}^{q }, (f^{i })^*\nu). 
\end{equation}
On the other hand, applying Proposition \ref{Proposition H(P vee Q)}, for all $0 \leq l \leq q-1$ we have
$$ H({\mathcal P}^n, \nu) \leq H ({\mathcal P}^n \vee f^{-n}{\mathcal P} \vee f^{-(n+1)}{{\mathcal P}}
\vee \ldots \vee f^{-(n+l-1)}{{\mathcal P}}, \nu) = H({\mathcal P}^{n+ l}, \nu) = $$ $$
 H( (\vee_{i=0}^{l-1} f^{-i} \mathcal P) \vee (f^{-l} {\mathcal P}^n) ).$$ Therefore, 
$$H({\mathcal P}^n, \nu) \leq H({\mathcal P}^{n+ l}, \nu) \leq \Big (\sum_{i= 0}^{l-1} H(f^{-i}{\mathcal P}, \nu) \Big) + H(f^{-l} {\mathcal P}^n, \nu).   $$
So,
 $$H({\mathcal P}^n, \nu)  \leq q\log p + H(f^{-l} {\mathcal P}^n, \nu).$$

%\vspace{-.2cm}

\noindent Adding the above inequalities   for $0 \leq l \leq q-1$ and joining with the inequality (\ref{equation0}), we obtain:

%\vspace{-.5cm}

$$q H({\mathcal P}^n, \nu) \leq q^2 \log p +  \sum_{i=0}^{q-1} H(f^{-l} {\mathcal P}^n, \nu) \leq 2 q^2 \log p + \sum_{i= 0}^{Nq -1} H({\mathcal P}^{q },
 (f^{i })^*\nu).$$
Recall that $n = Nq + j$ with $0 \leq j \leq q-1$. So $n-1 = Nq + j - 1 \geq Nq - 1$ and then

%\vspace{-.5cm}

$$q H({\mathcal P}^n, \nu)  \leq 2 q^2 \log p \ + \sum_{i= 0}^{n -1} H({\mathcal P}^{q }, (f^{i })^*\nu) $$

%\vspace{-.5cm}

\noindent  Now we put $\nu= \nu_n$ and divide by $n$. 
Using that $\mu_n = (1/n) \sum_{j=0} ^{n-1} {(f^j)} ^* \nu_n$ and applying Proposition \ref{PropositionEntropyConvexComb}, we obtain
$$ \frac{q\, H({\mathcal P}^n, \nu_n)}{n} \leq  \frac{2 q^2 \log p}{n} + \frac{1}{n} \, \sum_{i= 0}^{n -1} H({\mathcal P}^{q }, (f^{i})^*\nu_n)   \leq \frac{2 q^2 \log p}{n} + H({\mathcal P}^{q }, \mu_n ) .$$

%\vspace{-.2cm}

\noindent For any fixed $\epsilon >0$ (and the natural number $q \geq 1$ still fixed), take    $n \geq n(q):= \max\{q, 6\, q  \log p/ \epsilon \}$ in the inequality above. We deduce:

%\vspace{-.3cm}

$$\frac{q}{n} H({\mathcal P}^n, \nu_n) \leq  \frac{q \epsilon}{3} + H({\mathcal P}^q , \mu_n) \ \ \ \ \forall \ n \geq n(q)  \ \ \ \forall \ q \geq 1,$$
from where we obtain
\begin{equation}
	\label{equationSuper}
\frac{1}{n} H({\mathcal P}^n, \nu_n) \leq    \frac{\epsilon}{3} + \frac{H({\mathcal P}^q, \mu_n)}{q} \ \ \ \ \ \ \forall \   n \geq n(q) \ \ \ \ \forall \ q \geq 1.\end{equation}

%\vspace{-.2cm}

\noindent The inequality above holds for  for any fixed $q \geq 1$ and for any $n$ large enough, depending on $q$.

By hypothesis, $\mu$ is   $f$-invariant. So, after Equality (\ref{equationEntropy}), there exists $q \geq 1$ such that
\begin{equation}
	\label{equationSuper2}
	\frac {H({\mathcal P}^q, \mu)}{q} \leq  h_{\mu} + \frac{ \epsilon}{3}.\end{equation}

\noindent Fix such a value of $q$. Since $\mu(\partial({\mathcal P})) = 0$ due to the construction of ${\mathcal P}$ (depending on the given measure $\mu$), we can apply Lemma \ref{lemma0} to find $\epsilon^*>0$ such that
$$ \frac{ H({\mathcal P}^q, \rho)}{q} \leq \frac{H({\mathcal P}^q, \mu)}{q} + \frac{\epsilon}{3} \mbox{ if } \mbox{dist}(\rho, \mu) < \epsilon^*. $$

To prove assertion (iii) we assume $\mbox{dist} (\mu_n, \mu) < \epsilon^*$ for all $n \geq 1$. We deduce
$$ \frac{H({\mathcal P}^q, \mu_{n}) }{q} \leq \frac{H({\mathcal P}^q, \mu)}{q} + \frac{\epsilon}{3} \ \ \forall \ n \geq 1.$$
Joining this latter assertion with inequalities (\ref{equationSuper}) and (\ref{equationSuper2})  we obtain
$$ \frac{1}{n } H({\mathcal P}^{n }, \nu_{n }) \leq h_{\mu} + \epsilon \ \ \ \forall \ n \geq n(q).$$
Thus, after denoting $n_0:= n(q)$, assertion (iii) is proved.
\end{proof}

\noindent{\bf Notation.}  Recall Equality (\ref{Equality Def psi}) defining the continuous  real function $\psi: M \to \mathbb{R}$, which is the potential in the statement of Theorem \ref{MainTheorem}.
	For any real number $r \geq 0$ construct
	
	%\vspace{-.7cm}
	
	\begin{equation}\label{equationKr}{\mathcal K}_r := \{\nu \in {\mathcal M}_f: \;
		\int \psi \, d \nu + h_{\nu} \geq - r\}.\end{equation}
	
(We note that, a priori, the set $\mathcal K_r$ of $f$-invariant probabilities may be empty.)

For any integer $n \geq 1$  and for all $x \in M$ recall the Definition \ref{DefinitionStatisticalBasin} of the empirical probability $\sigma_n(x)$), and of the p$\omega$-limit set $p\omega(x)$ in the set  $  {\mathcal M}$ of Borel probabilities. We also recall  the   weak$^*$ metric $\dist^*$ in the space   ${\mathcal M}$  of  probability measures constructed by equality (\ref{EqualityDef weak^*distance}).

\begin{lemma}
	\label{Lemma2}  Let $f$ be a $C^1$ expanding map on
	$M$. Let $m$ be the Lebesgue measure on $M$. Fix $r >0$ and let ${\mathcal K}_r  $ be defined by Equality \em (\ref{equationKr}). \em
		Then, for all $0 < \epsilon < r/2$, and for all $\mu \in {\mathcal M}_f$ such that $\mu \not \in {\mathcal K}_r$, there exists $n_0 \geq 1$ and $0<\epsilon ^* \leq \epsilon/3$ such that  \em \begin{equation} \label{equationLemma3}
		m (\{x \in M: \mbox{dist} ( \sigma_{n}(x)  , \mu ) < \epsilon^*\} ) <  e^{ n(\epsilon  - r)} <  e^{- {nr}/{2}}  \ \ \ \ \ \forall \ n \geq n_0.\end{equation}
	
\end{lemma}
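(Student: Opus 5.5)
Since $\mu\notin\mathcal K_r$, we have $h_\mu+\int\psi\,d\mu<-r$. We will apply Lemma \ref{Lemma1} with tolerance $\epsilon/3$ and some $\delta$ small, to be chosen later. This produces a partition $\mathcal P$ with $\mu(\partial\mathcal P)=0$, together with $\epsilon_1^*$ and $n_1$. We then set $\epsilon^*\le\min\{\epsilon_1^*,\epsilon/3\}$, and also small enough that $\dist^*(\rho,\mu)<\epsilon^*$ forces $|\int\psi\,d\rho-\int\psi\,d\mu|<\epsilon/3$. This last condition is immediate because $\phi_0=\psi$ carries weight $1$ in (\ref{EqualityDef weak^*distance}).

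Let $C_n:=\{x:\dist^*(\sigma_n(x),\mu)<\epsilon^*\}$. If $m(C_n)=0$ there is nothing to prove. Otherwise, let $\nu_n:=m|_{C_n}/m(C_n)$ be normalized Lebesgue measure on $C_n$, and $\mu_n:=\frac1n\sum_{j<n}(f^j)^*\nu_n$. We have $\mu_n=\int\sigma_n(x)\,d\nu_n(x)$, which lies in the $\epsilon^*$-ball around $\mu$ by the convexity of balls (Lemma \ref{LemmaBolasConvexas}, applied in its integral form, or via the linearity of each term of $\dist^*$). Hence Lemma \ref{Lemma1}(iii) gives $\frac1nH(\mathcal P^n_f,\nu_n)\le h_\mu+\epsilon/3$ for $n\ge n_1$. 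Also, $\frac1n\sum_{j<n}\psi\circ f^j(x)=\int\psi\,d\sigma_n(x)<\int\psi\,d\mu+\epsilon/3$ for every $x\in C_n$. In other words, $|\det Df^n_x|>e^{-n(\int\psi d\mu+\epsilon/3)}$ on $C_n$.

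The central step is a volume estimate for the pieces. Since $f$ is uniformly expanding, for $\delta$ small $f$ is injective on sets of diameter $<\delta$. Each nonempty piece $A\in\mathcal P^n_f$ then has $f^j(A)$ of diameter $<\delta$ for all $j<n$, so $f^n$ is injective on $A$ (by induction). Hence
$$m(A\cap C_n)\le \int_{A\cap C_n}|\det Df^n|\,dm\; e^{n(\int\psi d\mu+\epsilon/3)}\le m(M)\,e^{n(\int\psi d\mu+\epsilon/3)}.$$
(Choosing $\delta$ below an injectivity radius for $f$ uses compactness and the local diffeomorphism property. This is where I expect to be careful, since Lemma \ref{Lemma1} allows any $\delta<\alpha$, so we may shrink $\delta$ freely.)

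To finish, write $a_A:=\nu_n(A)=m(A\cap C_n)/m(C_n)$. Then
$$H(\mathcal P^n_f,\nu_n)=-\sum a_A\log a_A\ge -\sum a_A\log\frac{m(M)e^{n(\int\psi d\mu+\epsilon/3)}}{m(C_n)}= \log m(C_n)-\log m(M)-n\Big(\int\psi\,d\mu+\frac\epsilon3\Big).$$
Combining this with the upper bound from Lemma \ref{Lemma1} gives
$$\log m(C_n)\le \log m(M)+n\Big(h_\mu+\int\psi\,d\mu+\frac{2\epsilon}3\Big)<\log m(M)+n\Big(\frac{2\epsilon}{3}-r\Big).$$
Enlarging $n_0\ge n_1$ so that $m(M)\le e^{n\epsilon/3}$ (normalize $m$ if it is a probability), we get $m(C_n)<e^{n(\epsilon-r)}$. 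Finally, $\epsilon<r/2$ gives $e^{n(\epsilon-r)}<e^{-nr/2}$. The main obstacle is the injectivity/Jacobian step, making sure the change-of-variables bound is applied piecewise with $\delta$ chosen uniformly.
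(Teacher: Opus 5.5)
Your proof is correct. Its skeleton is the paper's: Lemma \ref{Lemma1} with tolerance $\epsilon/3$, injectivity of $f^n$ on the pieces of $\mathcal P^n_f$ plus change of variables, convexity of the $\dist^*$-ball, and the weight-one term $\phi_0=\psi$ of the metric. The genuine difference is the auxiliary measure $\nu_n$.

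\textbf{The paper's route.} It picks one point $y_k$ in each piece $X_k$ meeting $C_n$. It then uses the atomic measure $\nu_n=\sum_k\lambda_k\delta_{y_k}$ with Gibbs weights $\lambda_k\propto e^{\sum_{j<n}\psi(f^j(y_k))}$. This gives the exact identity $\log L=n\int\psi\,d\mu_n+H(\mathcal P^n_f,\nu_n)$, a discrete version of the variational principle. Since $\mu_n$ is then a finite convex combination, Lemma \ref{LemmaBolasConvexas} applies verbatim. The cost is a distortion step. To replace the integrand $e^{\sum_{j<n}\psi\circ f^j}$ on $X_k$ by its value at $y_k$, the paper must also choose $\delta$ as a uniform continuity modulus of $\psi$ for $\epsilon/3$. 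It uses $\phi_0=\psi$ only for the barycenter $\mu_n$.

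\textbf{Your route.} You take $\nu_n$ to be normalized Lebesgue measure on $C_n$. You use $\phi_0=\psi$ pointwise on all of $C_n$ to get a uniform lower bound for $|\det Df^n|$ there. You then turn the resulting bound on each $\nu_n(A)$ into $H(\mathcal P^n_f,\nu_n)\ge-\log\max_A\nu_n(A)$.

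This removes the distortion step and the representative points entirely, so the partition only needs to be fine enough for injectivity. It also keeps track of $m(M)$. The paper tacitly assumes $m(M)\le 1$ when it bounds $m(f^n(X_k\cap C_n))$.

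What you pay is needing convexity of the ball for the integral barycenter $\mu_n=\int\sigma_n(x)\,d\nu_n(x)$, not just for a finite convex combination. As you say, this follows directly from the linearity of each term of $\dist^*$. The inequality stays strict because $\dist^*(\sigma_n(x),\mu)<\epsilon^*$ at every point of $C_n$.

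\textbf{A minor point you share with the paper.} Lemma \ref{Lemma1}(iii) is phrased for a whole sequence $(\nu_n)_{n\ge 1}$, but you only define $\nu_n$ when $m(C_n)>0$. For the remaining $n$, put $\nu_n:=\mu$, so that $\mu_n=\mu$. Alternatively, note that the proof of Lemma \ref{Lemma1} gives the bound for each $n\ge n_0$ separately.
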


\begin{proof} 
	As in the proof of Lemma \ref{Lemma1}, for simplicity along this proof we write ${\mathcal P}^{n }$ instead of $ {\mathcal P}_f^{n } $, and $ h_{\mu} $ instead of $h_{\mu} (f) $.

From Proposition \ref{PropositionExpandingIsExpansive}, $f$ is expansive in the future.
	Let $\alpha >0$ be a expansivity constant for $f$. For the given value of $\epsilon >0$, fix  a uniform continuity modulus $0 <\delta < \alpha$ for $\epsilon/3$ of the function $\psi = - \log |\mbox{det}(Df)|$. Namely 
	\begin{equation}
		\label{Equation2}
		 |\psi(x) - \psi (y)| < \epsilon/3 \mbox { if } \dist (x,y) < \delta.
	\end{equation}
	 For such a value of $\delta$, for the    given measure $\mu \in {\mathcal M}_f$, and for $\epsilon /3$ instead of $\epsilon$, apply Lemma \ref{Lemma1} to  construct  the partition ${\mathcal P}$ in $M$, and the numbers $\epsilon^* >0$ and $n_0 \geq 1$, such that assertions (i), (ii) and (iii) hold. In particular assertion (iii) states that  for any sequence of probability measures $\nu_n$,
	if    $\mu_n:= \frac{1}{n} \sum_ {j= 0}^{n-1} (f^j)^* \nu_n,$  satisfies 
	$\mbox{dist} (\mu_n, \mu) < \epsilon^* \ \ \forall \ n \geq 1$,
	then
	\begin{equation} \label{eqn25} \frac{1}{n } H({\mathcal P}^{n }, \nu_{n }) \leq h_{\mu} + \frac{\epsilon}{3} \ \ \ \forall \ n \geq n_0.\end{equation}
	It is not restrictive to assume that $$\epsilon^* \leq \epsilon/3.$$
	Denote, for all $n \geq 1$:
	\begin{equation}
		\label{Equation6}
			C_{n } := \{x \in M: \ \mbox{dist}(\sigma_{n}(x), \mu) < \epsilon^*\}. 
	\end{equation}

	To prove this Lemma we must prove that
	\begin{equation}
		\label{equationToBeProved}
		m(C_{n}) \leq e ^{n( \epsilon - r )} \ \ \forall \ \ n \geq n_0 \ \ \ \mbox{ (to be proved)} \end{equation}

	Since $f$ is $C^1$ expanding, its derivative $Df_x$ is invertible for all $x \in M$. Thus, by the local inverse map theorem, $f$ is a local diffeomorphism. The compactness of $M$ implies that there exists a uniform value  $\delta_1 > 0 $ such that $f$ restricted to any ball of radius $\delta_1$ is a diffeomorphism onto its image.  Therefore, if the diameter of the partition ${\mathcal P}$ is chosen small enough, the restricted map  $f^n|_{X} : X \mapsto f^n(X)$ is a diffeomorphism for all  $X \in {\mathcal P}^n$ and for all $n \geq 1$. Thus, recalling that $\psi= - \log |\mbox{det}Df|$, we deduce the following equality for all $X \in {\mathcal P}^n$:
	$$m(X \cap C_{n}) = \int _{f^n(X \cap C_{n})}  |\mbox{det}Df^{-n}|  \, d m = \int _{f^n(X \cap C_{n})} e ^{\sum_{j= 0}^{n-1} \displaystyle{\psi \circ f^j }} \, d m.  $$
	Therefore
	\begin{equation}
		\label{Equation1}
		m(C_{n})  = \sum_{X \in {\mathcal P}^n} \int _{f^n(X \cap C_{n})} e ^{\sum_{j= 0}^{n-1}\displaystyle{ \psi \circ f^j } }\, d m.
	\end{equation}
	
	Either $C_{n} = \emptyset$, and Assertion (\ref{equationToBeProved}) becomes trivially proved, or the finite family  of pieces $  \{X \in {\mathcal P}^n : \ X \cap C_{n} \neq \emptyset \} = \{X_1, \ldots, X_N\} $ has $N = N(n) \geq 1$ pieces. In this latter case, choose a single point $y_k \in X_k  \cap C_{n}$ for each $k= 1, \ldots, N$. Denote by
	$Y(n) = \{y_1, \ldots, y_{N}\}   $  the collection of such points. Due to the construction of $\delta >0$ according to Equation (\ref{Equation2}), and since the partition ${\mathcal P}$ has diameter smaller than  $\delta$ (because it satifies (i) of Lemma \ref{Lemma1}), we deduce:
	$$\sum_{j= 0}^{n-1} \psi(f^j(y)) \leq \sum_{j= 0}^{n-1} ( \psi (f^j(y_k)) +   \epsilon/3)   \ \ \forall \ y, y_k \in X_k, \ \forall \  k= 1, \ldots, N.$$

	%\vspace{-.3cm}
	
	\noindent Therefore, substituting in Equality (\ref{Equation1}), 
$$m(C_{n})  \leq e^{n \epsilon/3} \sum_{k= 1}^N e ^{\sum_{j= 0}^{n-1} \displaystyle{\psi (f^j(y_k))}} \, m(f^n(X_k \cap C_{n})).$$
Thus
	$$m(C_{n}) \leq e^{n \epsilon/3} \sum_{k= 1}^N e ^{\sum_{j= 0}^{n-1} \displaystyle{\psi (f^j(y_k))}}.$$
	\noindent Define
	\begin{equation}
		\label{eqn24}
		L:= \sum_{k= 1}^N e ^{\sum_{j= 0}^{n-1} \displaystyle{\psi (f^j(y_k))}}, \ \ \ \ \ \ \ \ \ \ \lambda_k : = \frac{1}{L} \, e ^{\sum_{j= 0}^{n-1} \displaystyle{\psi (f^j(y_k))}} \in (0,1). \end{equation}  Then, $$ \sum_{k= 1}^N \lambda_k = 1$$ and
	\begin{equation}
	\label{Equation3a}
		m(C_{n}) \leq e^{(n \epsilon/3) + \log L},
	\end{equation} 
where
\begin{equation}
	\label{Equation3b}
\log L = \left (\sum_{k= 1}^N \lambda_k \sum_{j= 0}^{n-1} \displaystyle{\psi (f^j(y_k))} \right ) - \left (\sum_{k= 1}^{N} \lambda_k \log \lambda_k \right ).	
\end{equation} 	
(To prove the equality above,  take $\log $ in the equality at right in (\ref{eqn24}), multiply by $\lambda_k$ and take the sum for $k= 1, \ldots, N.$ )

	 Define the probability measures
	 \begin{equation}
	 	\label{Equation7a}
	 		\nu_{n} := \sum_{k= 1}^N \lambda_k \delta_{y_k}, 
	 \end{equation}
  \begin{equation}
 	\label{Equation7b}
 	 \mu_{n } := \frac{1}{n} \sum _{j= 0}^{n-1} (f^j)^* (\nu_n) = \sum_{k=1}^N \lambda _k \frac{1}{n} \sum_{j= 0}^{n-1} \delta_{f^j(y_k)} = \sum_{k= 1}^N \lambda_k\sigma_{n}(y_k).  
 \end{equation}
	(To prove the above equality at right recall Definition \ref{DefinitionStatisticalBasin} of the empirical probability measures $\sigma_n(y_k)$.)

Then,	
\begin{equation}
	\label{Equation4}
	\sum_{k= 1}^N \lambda_k \sum_{j= 0}^{n-1} \displaystyle{\psi (f^j(y_k))} = n\int \psi \, d \mu_n.
\end{equation}
Recall that for any piece $X_k \in \mathcal P ^n $ such that $C_n \cap X_k \neq \emptyset	$ 
we have chosen a single point $y_k \in C_n \cap X_k$. Then $\nu_n(X_k) = \lambda_k \delta_{y_k} (X_k) = \lambda_k$, and we deduce that
\begin{equation}
	\label{Equation5}
-	\sum_{k= 1}^{N} \lambda_k \log \lambda_k  = H({\mathcal P}^n, \nu_n).
\end{equation}

Therefore, combining Equations (\ref{Equation3a}),  (\ref{Equation3b}), (\ref{Equation4}) and (\ref{Equation5}), we obtain 
\begin{equation}
	\label{Equation80}
		m(C_{n}) \leq \mbox{exp}\Big({\frac{n \epsilon}{3} + \log L}\Big) = \mbox{exp} \Big ( n \Big ( \frac{\epsilon}{3} + \int \psi \, d \mu_n + \frac{H({\mathcal P}^n, \nu_n)}{n} \Big ) \Big ).
\end{equation}

	Now, we assert that \begin{equation} \label{eqntobeproved}\dist (\mu_n, \mu) < \epsilon^* \leq \frac{\epsilon}{3} \ \ \ \forall \ n \geq 1.\end{equation}
	In fact, by construction $y_k \in C_n$ for all $k= 1, \ldots, N$, Thus, from Equality (\ref{Equation6}), we have $$\mbox{dist}(\sigma_n (y_k), \mu) < \epsilon^*.$$
Recalling Lemma \ref{LemmaBolasConvexas},  the ball in ${\mathcal M}$ of center $\mu$ and radius $\epsilon^*$ is convex. Thus, any convex combination of the measures $\sigma_n (y_k)$ belongs to that ball. From Equality (\ref{Equation7b}) at right, $\mu_n $ is a convex combination of the measures  $\sigma_n(y_k) $. We deduce that $\mu_n$  belongs to that ball. Hence, inequality (\ref{eqntobeproved}) is proved.  So equation (\ref{eqn25}) holds.

	Combining Equations   (\ref{eqn25}) and (\ref{Equation80}), we deduce that
	$$ m(C_{n }) \leq \mbox{exp} \Big ( n \Big ( \frac{2 \cdot \epsilon}{3} + \int \psi \, d \mu_n  + h _{\mu} \Big ) \Big ) \ \ \forall \ n \geq n_0. $$
	Besides, from inequality (\ref{eqntobeproved}) and the construction of the weak$^*$-metric $\dist^*$ in ${\mathcal M}$  with $\phi_0 = \psi$ (recall Equality (\ref{EqualityDef weak^*distance})), we deduce that
	$$\left|\int \psi \, d \mu_n -  \int \psi \, d \mu \right| < \frac{\epsilon}{3}, \ \ \ \int \psi \, d \mu_n <  \int \psi \, d \mu  + \frac{\epsilon}{3}.$$

	Therefore, we obtain
	\begin{equation} \label{equationPrincipal} m(C_{n }) \leq \mbox{exp} \Big ( n \Big (   \epsilon  + \int \psi \, d \mu  + h _{\mu} \Big ) \Big ) \ \ \forall \ n \geq n_0. \end{equation}
	Finally, by hypothesis $\mu \not \in {\mathcal K}_r$. Thus,
	$ \int \psi \, d \mu  + h _{\mu} < - r.$
	Substituting this latter inequality in (\ref{equationPrincipal}), we
	conclude (\ref{equationToBeProved}), ending the proof. \end{proof}

The following lemma is a well-known elementary result in Probability Theory. We will apply it in the particular  case for which  $M $ is a compact Riemannian manifold and $\mathcal B$ is the Borel $\sigma$-algebra of subsets of $M$.
\begin{lemma} {\bf (Borel-Cantelli)}
	\label{LemmaBorel-Cantelli}
	Let $\mu$ be a probability measure on a measurable space $(M, \mathcal B)$. Let $\{C_n\}_{n\geq 1}$ be a sequence of measurable subsets $C_n \subset M$ such that
	$$  \sum_{n=1} ^{+\infty} \mu (C_n) < + \infty.  $$
	Then
	$$  \mu \left( \bigcap_{N \geq 1} \bigcup _{n \geq N} C_n \right) = 0.  $$
\end{lemma}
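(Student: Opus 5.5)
The plan is to use only countable subadditivity of $\mu$ and the convergence of the tail of a summable series. Set
$$E := \bigcap_{N \geq 1} \bigcup_{n \geq N} C_n,$$
the set of points lying in infinitely many of the $C_n$. For each fixed $N \geq 1$ we have the inclusion $E \subset \bigcup_{n \geq N} C_n$, so by monotonicity and then countable subadditivity of the probability measure $\mu$,
$$\mu(E) \leq \mu\Big(\bigcup_{n \geq N} C_n\Big) \leq \sum_{n=N}^{+\infty} \mu(C_n).$$

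The right-hand side is the tail of the convergent series $\sum_{n\geq 1}\mu(C_n) < +\infty$. It therefore tends to $0$ as $N \to +\infty$. Since the left-hand side $\mu(E)$ does not depend on $N$, letting $N \to +\infty$ gives $\mu(E) \leq 0$, hence $\mu(E)=0$.

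The set $E$ is measurable, being a countable intersection of countable unions of sets in $\mathcal B$, so $\mu(E)$ makes sense. No step is a genuine obstacle. The only point to state explicitly is that the tail of a convergent series of nonnegative terms goes to zero, which is exactly where the hypothesis $\sum_{n} \mu(C_n) < +\infty$ is used.

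In the application I expect in the paper, this lemma will be combined with Lemma \ref{Lemma2} as follows. By Lemma \ref{Lemma2}, $m(C_n) < e^{-nr/2}$ for $n \geq n_0$, and this bound is summable. Borel--Cantelli then shows that Lebesgue-almost every $x$ lies in only finitely many $C_n$. Consequently $\mu \notin p\omega(x)$ for Lebesgue-a.e.\ $x$ near $\mu$, which should force pseudo-physical measures to lie in $\bigcap_{r>0}\mathcal K_r$, i.e.\ $P(f,\psi)=0$ with $\mu$ attaining it.
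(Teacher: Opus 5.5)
Your proof is correct and is essentially the paper's argument: both bound $\mu(E)$ by the tail $\sum_{n\ge N}\mu(C_n)$ through $\bigcup_{n\ge N}C_n$ and let $N\to+\infty$. The only difference is that the paper first writes $\mu(E)=\lim_{N}\mu\bigl(\bigcup_{n\ge N}C_n\bigr)$ by continuity from above, whereas you use the inclusion $E\subset\bigcup_{n\ge N}C_n$ for each fixed $N$, which is slightly more economical but amounts to the same thing.
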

\begin{proof}
	The sequence $\left\{ \bigcup_{n \geq N}C_n\right\}_{N \geq 1}$ is (not necessarily strictly) decreasing with $N$.
	Then
	$$  \mu \left( \bigcap_{N \geq 1} \bigcup _{n \geq N} C_n \right) = \lim_{N \rightarrow +\infty}
	 \mu \left(\bigcup _{n \geq N} C_n \right) \leq  \lim_{N \rightarrow +\infty}
	\sum_{n=N}^{+ \infty} \mu(C_n).$$
	Finally,    $\lim_{N \rightarrow +\infty}
	\sum_{n=N}^{+ \infty} \mu(C_n) = 0$ because
	$\sum_{n=N}^{+ \infty} \mu(C_n) $ is the tail of the convergent series  $  \sum_{n=1}^{+ \infty} \mu(C_n) $.
\end{proof}

\subsection{End of the proof of Theorem \ref{MainTheorem}}

\begin{proof}
	
We will prove that any pseudo-physical measure $\mu$ satisfies Pesin's Entropy Formula, namely,
 for the $C^1$- expanding map $f$, according to Proposition \ref{PropositionPesinForm=EqState}:
	$$h_{\mu} (f) + \int \psi\, d \mu = 0    \ \ \ \ \ \ \ ,$$
	where $$ \psi := -\log |\mbox{det} Df |.$$	
	For any $r >0$ consider the   compact set
	${\mathcal K}_r \subset {\mathcal M}$ defined by Equality (\ref{equationKr}).
	Since $\{{\mathcal K}_r\}_r$ is decreasing when decreasing  $r $, we have
	$${\mathcal K}_0  = \bigcap_{r >0}{\mathcal K}_r, \ \ \mbox{ where} $$
	$${\mathcal K}_0 := \Big\{ \mu \in {\mathcal M}_f: \ \ \int \psi \, d \mu  + h_{\mu}(f) \geq 0\Big\}.$$	
	By Margulis-Ruelle's inequality (see Theorem \ref{TheoremRuelleIneq}) and Corollary \ref{CorollaryLiouvilleFormulaForExpandingMaps} applied to $C^1$ expanding maps, we have
	\begin{equation} \label{eqn29} h_{\mu} (f) \leq \int \log |\mbox{det} Df| \, d \mu = - \int \psi \, d \mu \ \ \forall \ \mu \in {\mathcal M}_f. \end{equation}
	Therefore, the (a-priori maybe empty) set ${\mathcal K}_0$ is composed by all the invariant measures $\mu$  such that
	$$\int \psi \, d \mu  + h_{\mu}(f) = 0,$$ or, in other words, ${\mathcal K}_0$ is the set of invariant measures $\mu$
	that satisfy Pesin's Entropy Formula.
	So, to prove that any pseudo-physical measure $\mu$ satisfies Pesin's Entropy Formula,  we must  prove that $ \mu \in  {\mathcal K}_r$ for all $r >0$.
	
	Assume by contradiction that there exists $r >0$ such that the pseudo-physical measure $\mu $ does not belong to ${\mathcal K}_r$. From Lemma \ref{Lemma2}, there exists $n_0 \geq 1 $ and $\epsilon^* >0$
	such that,   \begin{equation} \label{eqn28} m \big(\{x \in M: \mbox{dist}^*(\sigma_{n}(x), \mu) <\epsilon^*) \}\leq
	e^{-nr/2} \ \ \ \forall \ n \geq n_0,\end{equation} where $m$ denotes the Lebesgue measure.

	From Definition \ref{DefinitionPseudoPhysical} of pseudo-physical measure,  for any $\epsilon^* > 0$  the set
	$$A = \{ x \in M: \ \ \dist(p\omega (x), \mu) < \epsilon^*\}$$ has positive Lebesgue measure: $m(A) >0$. For each $n \geq 1$, denote
	$$C_n := \{x \in M: \ \ \dist (\sigma_n(x), \mu) < \epsilon^*\}.$$ Apply Definition \ref{DefinitionStatisticalBasin} of the set $p \omega (x)$ in $\mathcal M$ composed by  the weak$^*$-limits of all the convergent subsequences of $\{\sigma_n(x)\}$. Therefore,
	$$A \subset \bigcap_{N \geq 1} \bigcup_{n \geq N} C_n,$$
	So, we deduce the following inequality:
\begin{equation} \label{Equation99}
	m \Big(\bigcap_{N \geq 1} \bigcup_{n \geq N} C_n  \Big) \geq m(A) >0.  
\end{equation}
	
	But inequality (\ref{eqn28}) implies that $\sum_{n=1}^\infty m (C_n) < +\infty$; hence, applying   Borel-Cantelli Lemma (see Lemma \ref{LemmaBorel-Cantelli}), it follows that
	$$ m\Big(\bigcap_{N \geq 1} \bigcup_{n \geq N}C_n\Big)=0,$$  contradicting inequality (\ref{Equation99}).
	 \end{proof}

\section{Examples} \label{Section5-Examples}

\begin{definition} \label{DefinitionC1+alpha}
	 {\bf ($C ^{1+ \alpha}$-maps.)} \em  We say that a map $f:M\mapsto M$  is  $C^1$ plus Hölder, and denote $f \in  C ^{1+ \alpha}$,  if $f$ is differentiable, with continuous derivative and besides there exists  $  \alpha>0$ such that the derivative of $f$ is $\alpha$-Hölder continuous. Namely, for some constant $K>0$, the following inequality holds:
	 $$\| Df_x -Df_y\| \leq K (\mbox{dist} (x,y) )^{\alpha} \ \mbox{ for all } x,y \in M. $$  
	
\end{definition}

As said in the introduction, the theory of existence of physical measures  (see Definition \ref{DefinitionPhysicalMeasure}) for   $ C^{1+\alpha}$ expanding maps  (see Definition \ref{DefinitionExpandingMaps}) is well known. So, we will look only for examples $f$ that are $C^1$ but not $C^{1+\alpha}$ for any $\alpha >0$.

Precisely, we will construct examples of $C^1\setminus C^{1+\alpha}$ expanding maps on the circle $S^1$ and on the 2-torus $T^2$ and study their pseudophysical measures (see Definition \ref{DefinitionPseudoPhysical}). In the examples that we will  construct along this section, the pseudophysical measures will be indeed physical. Applying Theorem \ref{MainTheorem} we know that all these measures will satisfy Pesin's Entropy Formula (see Definition \ref{DefinitionPesinFormula}).

In all the examples that we present along this section the invariant physical measures are absolutely continuous with respect to the Lebesgue measure. Thus, these examples do not belong to the $C^1$ generic family of maps found in \cite{avilabochi}.
 
First, we will construct two examples in $S^1$, and second,  an example on $T^2= S^1 \times S^1$ that will be the product of the examples previously constructed on $S^1$. The two examples on $S^1$ are taken from Subsection 2.1 of \cite{Araujo-Santos-2019}.

As the circle $S^1$ is the result of identifying the extremes of a closed interval, we will construct the examples by constructing maps on the interval having finitely many $C^1$- expanding, order preserving continuity pieces, that 
are surjective on each continuity piece, and  the maps and their derivatives glue well in the extremes of each continuity piece so they can be seen as $C^1$ maps on $S^1$. Precisely, in the example of Figure \ref{Figure1}, we first take a real number $0<b_0<1$ and construct any piecewise $C^1$ map  $f: [-1,1] \to [-1,1]|$ that satisties the following properties, so it defines a $C^1$-expanding and order preserving map on the circle $S^1 = [-1 ,1]/_{(-1\sim 1)}$:
$$f(-1)= f(-b_0^+) = f (b_0^+) = -1, \ \ \ f(1)= f(-b_0^-)= f(b_0^-)= 1  $$
(where $f(x_0^+)$ denotes the lateral limit of $f(x)$ when $x\rightarrow x_0^+$ by the right, and $f(x_0^-)$ the lateral limit by the left), 
$$  f' (x) > 1 \ \ \forall \ x \in (-1,-b_0) \cup (-b_0, b_0) \cup (b_0,1) $$ and   the following limits exist and are finite:
$$f'(-1^+)= f'(1^-),    \ \ f'(-b_0^+)= f'(-b_0^-) ,\ \   f'(b_0^+)= f'(b_0^-). $$

 \begin{figure}[ht]
 	\centering
 	\hspace{-0.2in}\vspace{0.2in}\includegraphics[scale=.40]{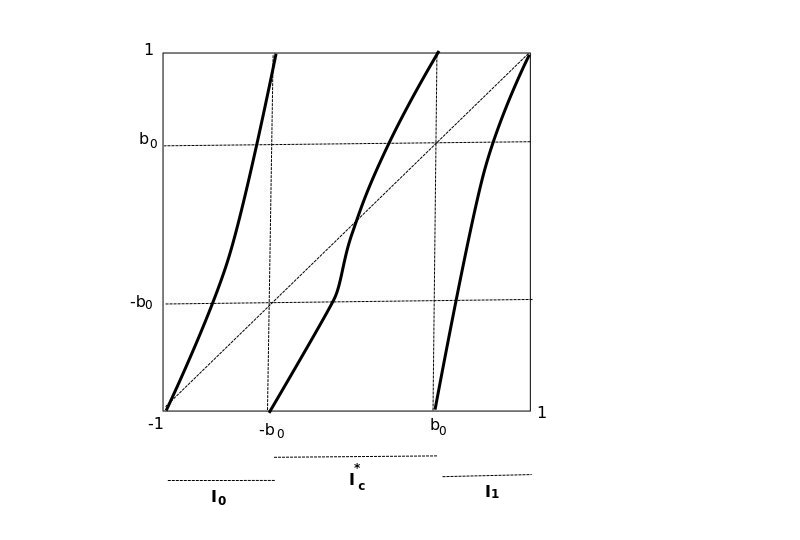}
 	\caption{An expanding map $f$ on the circle $S^1$ with index 3.  \label{Figure1}}
 \end{figure}

\subsection{Dynamically Defined Cantor Sets} \label{SubsectionCantorSetsDynDefined}

Before giving the examples, we will recall the construction of a Cantor set that is dynamically defined in an interval by an expanding map defined in two closed disjoint subintervals. 

To fix the ideas we will use the example  $f: [-1,1] \to [-1,1]$ described above (Figure \ref{Figure1}). 

Denote $I_0:=[-1, -b_0], \  I_1 :=[b_0,1]$ and $I^*_c := (-b_0, b_0)$,  $G := f|_{I_0 \cup I_1}$,  $G_0: =f |_{I_0 }$ and $G_1:= f |_{I_1}$ (Figure \ref{Figure2}).
Construct the following compact set
\begin{equation} \label{equation00}
	K:= \bigcap_{n=1} ^{\infty} G^{-n} ([-1,1]) \subset I_0 \cup I_1
\end{equation}

Each point $x \in K$ has an \em itinerary, \em  which is defined as the following sequence of  0's and 1's:
$$ \underline{a} = \underline{a} (x) :=  a_0, a_1, \ldots, a_n, \ldots \in 2^{\mathbb N}\ \mbox{where }    G^n (x) \in I_{a_n} \ 
\mbox { for all }n \geq 0.
$$
For fixed $n \geq  1$, denote by $\underline{a} _n$ the following word of lenght $n$ composed by 0's and 1's:
$$ \underline{a} _n :=  a_0, a_1, \ldots, a_{n-1}  \in \{0,1\}^n.
$$
Also denote  \begin{equation}
	\label{EquationConstruccionAtom}
	I_{\underline{a}_n}:= G_{a_0}^{-1} \circ G_{a_1}^{-1} \circ \ldots \circ G_{a_{n-1}}^{-1} ([-1,1])
\end{equation}

Note that, for each fixed word  $\underline{a}_n$, the set     $I_{\underline{a}_n}$  is a closed  interval because $G_0$ and $G_1$ are strictly increasing continuous maps. By construction, each interval $ I_{\underline{a}_n} $
is composed by all the points of $I_0 \cup I_1$ that share  the same finite word of the itinerary, from time 0 to time $n-1$.

From the above construction, for each fixed $x \in K$,  being $\underline{a} _n (x)$ the finite word of length $n\geq 1$ taken from the fixed sequence $\underline{a} (x)$, we have  
$ x \in \bigcap _ { n=1} ^{\infty}  I_{\underline{a}_n(x)}.$
But, since $G_0$ and $G_1$ are $C_1$-expanding, the length $|I_{\underline a _{n+1} (x)}|$ 
 of the interval $I_{\underline a _{n+1} (x)}$ satisfies 
$$ |I_{\underline a _{n+1} (x)}|   \leq \lambda |I_{\underline a _{n} (x)}| \ \mbox{for all } n \geq 1, \ \mbox{with } 0 <\lambda<1,$$
where $ \lambda =( \min_{u \in I_0 \cup I_1} G ' (u))^{-1} .$ We conclude that  $|I_{\underline a _{n} (x)}|  \leq  \lambda ^n \rightarrow 0$, with $n \rightarrow +\infty $. So:
$$\{x\} =\bigcap _ { n=1} ^{\infty}  I_{\underline{a}_n(x)}.    $$

Taking now all the finite words $\underline a _n \in \{0,1\}^n $, we obtain for each time $n \geq 1$ the following equality 
$$G^{-n} ([-1,1]) = \bigcup_{\underline a _n \in \{0,1\}^n} I_{\underline a _n },   $$
and together with Equality (\ref{equation00}):

\begin{eqnarray} \label{equationCantorSet}
	K= \bigcap _{n=1}^{\infty} \bigcup_{\underline a _n \in \{0,1\}^n} I_{\underline a _n } 	
\end{eqnarray}
Taking into account  that $|I_{\underline a _{n} (x)}|  \rightarrow 0$, with $n \rightarrow +\infty $, we deduce that $K$ is a Cantor set. 

\begin{definition} \label{DefinitionCantorSetDynDefined} \em {\bf (Dynamically defined Cantor set.)}
We say that $K$ is  the \em dynamically defined Cantor set   by the map $G$ \em if it satisfies Equality  (\ref{equation00})   (and hence also Equality (\ref{equationCantorSet})).
\end{definition}

\begin{definition} {\bf{(Atoms generating the Cantor set.)}}
	\label{DefinitionAtom} \em
For fixed $n\geq 1$, we call the $2^n$ pairwise disjoint closed intervals $	I_{\underline{a}_n}$, $\underline{a}_n \in \{0,1\}^n$ defined by Equality (\ref{EquationConstruccionAtom}) and satisfying Equality (\ref{equationCantorSet}), \em the atoms of generation $n$ \em  generating the Cantor set $K$.
\end{definition}
We assert that, for fixed $\underline{a}_n \in \{0,1\}^n$,  the atom  $I_{\underline{a}_n}$ of generation $n$ contains exactly two atoms $I_{a_0 a_1 \ldots a_{n-1} 0}  $ and $I_{a_0 a_1 \ldots a_{n-1}1} $ of generation $n+1$ that are obtained from $I_{\underline{a}_n}$, at left and at right respectively, after removing an open interval  $I^*_{\underline{a}_n}$ (see Figure \ref{Figure3}). This is the classical construction of a Cantor set in the interval, even if it is not dynamically defined. 

In fact, for $n=1$, note that $G_0$ and $G_1$ are continuous, strictly increasing and surjective on $[-1,1] = I_0 \cup I^*_c \cup I_1$ (see Figure \ref{Figure2}). Applying Equality (\ref{EquationConstruccionAtom}), the two atoms of generation 2 inside $I_{0}$ are $I_{0 0} $ and $I_{01 }$,   the preimages by $G_0$  of $I_0$ and $I_1$ respectively (see Figure \ref{Figure2}).
They are two closed intervals obtained from $I_0$, at left and right respectively,  after removing the open interval
\begin{equation}
	\label{equalityGapI*0}
	I^*_0 = G_0 ^{-1} (I^*_c).
\end{equation} Namely  $I_0 = I_{00} \cup I_0^* \cup I_{01}$ Analogously,  $I_1 = I_{10} \cup I_1^* \cup I_{11}$ where  \begin{equation}
\label{equalityGapI*1}
I^*_1= G_1 ^{-1} (I^*_c).
\end{equation}

 \begin{figure}[ht]
	\centering
	\hspace{-0.2in}\vspace{0.2in}\includegraphics[scale=.40]{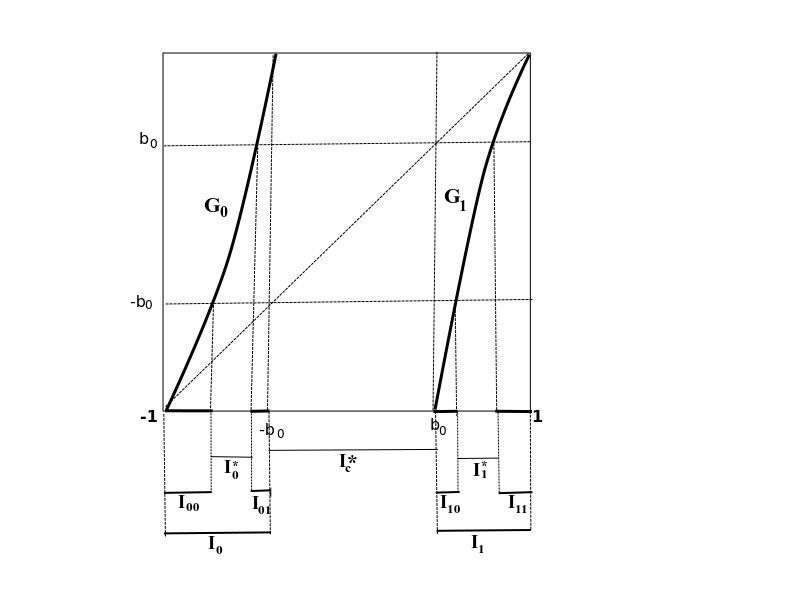}
	\caption{The expanding map $G$ defined in the intervals $I_0$ e and $I_1$, where $G_0 = G|_{I_0}, \ G_1 = G|_{I_1}$.  The atoms of generation 1 of the Cantor set dynamically defined by $G$ are $I_0= G_0^{-1}([-1,1])$ and $I_1 = G_1^{-1}([-1,1])$. Inside them there are  open intervals, the gaps of generation 1: $I_0^*= G_0^{-1}(-b_0, b_0)$ e $I_1^*=G_1^{-1}(-b_0, b_0)$, respectively. Removing these gaps from the atoms of generation 1, we obtain the atoms of generation 2: $I_ {00} = G_0^{-1}(I_0), \ I_{01} = G_0^{-1}(I_1),  I_ {10 }= G_1^{-1}(I_0), \ I_{11} = G_1^{-1}(I_1)    $. \label{Figure2}}
\end{figure}

By induction on $n\geq 2$, the continuity and strictly increasing property of the maps $G_0$ and $G_1$ ensure that there are exactly 2 atoms of generation $n+1$  inside  $I_{a_0 a_1  \ldots  a_{n-1}}$, obtained by taking the preimage by $G_{a_0}$ of the closed interval $$I_{a_1 \ldots   a_{n-1}} =  I_{a_1 \ldots   a_{n-1} 0} \cup  I^*_{a_1 \ldots   a_{n-1} } \cup  I_{a_1  \ldots  a_{n-1} 1}.$$ 
These two atoms of generation $n+1$ are the preimages by $G_{a_0}$ of the closed subintervals $I_{a_1 \ldots   a_{n-1} 0}$ and $I_{a_1  \ldots  a_{n-1} 1}$ respectively. 
They are obtained after removing from $ I_{a_0 a_1  \ldots a_{n-1} }$  the open interval
\begin{equation}
	\label{equalityGapI*a_n} 
	I^*_{a_0 a_1  \ldots a_{n-1} } := G_{a_0}^{-1} (I^*_{a_1 \ldots    a_{n-1} }),
\end{equation}
at left and right  respectively (see Figure \ref{Figure3}). Namely,
\begin{equation}
	\label{EqualityAtomDividedByGap} 
	I_{a_0, a_1 \ldots   a_{n-1}} =  I_{a_0 a_1 \ldots   a_{n-1} 0} \cup  I^*_{a_0 a_1 \ldots   a_{n-1} } \cup  I_{a_0 a_1  \ldots  a_{n-1} 1}.
\end{equation}

\begin{definition} {\bf{(Gaps of the Cantor set.)}}
	\label{DefinitionGaps}
	\em
	
We call  the open interval $I_c^*$ (see Figure \ref{Figure2})  the \em gap of generation 0 \em of the Cantor set $K$.	For fixed $n\geq 1$, we call the $2^n$ pairwise disjoint open intervals  $	I^*_{\underline{a}_n}$, contained respectively in the atoms $	I_{\underline{a}_n}$ as defined above for  $\underline{a}_n \in \{0,1\}^n$ by Equalities (\ref{equalityGapI*0}),  (\ref{equalityGapI*1}) and (\ref{equalityGapI*a_n}),   \em the gaps of generation $n$ \em  of the Cantor set $K$.
	
Observe that the countably infinite family of all the atoms of all generation is a family of pairdisjoint open intervals.	
\end{definition}

From the construction of the atoms and gaps of $K$ we deduce that the union of the atoms of generation $n+1$ is obtained from the union of the atoms of generation $n$ by removing all the gaps of generation $n$.   
So, using Equality (\ref{equationCantorSet}), we obtain that
\begin{equation}
	\label{EqualityCantorSetComplementoDeGaps}
	K= [-1,1] \setminus \bigcup_{n=0} ^ {\infty } \ \  \bigcup_{\underline {a}_n \in \{ 0,1\}^n} I^*_{\underline {a}_n},
\end{equation}
with the convention  $\{ 0,1\}^0 = \{c\}$. In other words, the Cantor set $K$ is obtained from the interval by removing all the countably infinite many pairwise disjoint gaps.

\subsection{Bowen's construction of a dynamical defined Cantor set with positive Lebesgue measure.} \label{SubsectionBowen}

The examples of $C^1\setminus C^{1+ \alpha}$-expanding maps on $S^1$ and on $T^2$ that we will construct in the following subsections will be based on Bowen's method to construct dynamical defined Cantor sets in the interval with positive Lebesgue measure. Along this subsection we follow Bowen's construction in   \cite{Bowen}.

The construction is done in two steps: First, we will construct a Cantor set $K$ in the interval $[-1,1]$ contained in the two disjoint  closed subintervals $I_0$ and $I_1$. This $K$ will not be dynamical defined yet, but it will have  positive Lebesgue measure.  Second, we will construct the map $G$  as in Figure \ref{Figure2} such that $G_0 := G|_{I_0} $ and  $G_1 := G|_{I_1} $  are $C_1\setminus C^{1+\alpha}$ expanding and the Cantor set constructed in the first step becomes dynamically defined by $G$ according to Definition \ref{DefinitionCantorSetDynDefined}.

\noindent {\bf  Step 1: Construction of the Cantor set $K$ with positive Lebesgue measure.}  We use the notation of Subsection \ref{SubsectionCantorSetsDynDefined}. 

Take $0 <b_0<1$ and define the gap of generation 0 to be $I^*_c := (-b_0, b_0) \subset [-1,1]$, and the two atoms of generation 1: $I_0 :=[-1, -b_0]$ and $I_1 := [b_0, 1].$

Take a sequence of real numbers $\alpha_n>0$ for all $n \geq 1$ such that
\begin{equation}
	\label{Equation200}
\sum_{n=1}^{\infty} \alpha_n < 2(1-b_0), \ \ \alpha_{n+1} < \alpha_n \mbox{ for all } n \geq 1, \ \lim_{n\rightarrow + \infty} \frac{\alpha_{n+1}}{\alpha_n} = 1, \ \ \alpha_1 <  2b_0 	
\end{equation} 
(for instance $\alpha_n := (k n^2)^{-1}$ for some constant $k >0$ large enough).

Let us construct the 2 gaps $I^*_0$ and $I^*_1$ of generation 1 and the  4 atoms $I_{00}, I_{0,1}, I_{10}$ and $I_{11}$ of generation 2:
For $a_0 \in \{0,1\} $ construct the open interval $I^*_{a_0} \subset I_{a_0}$ centered at the centre of     
$I_{a_0} $ and with length $\alpha_1/2$.  The atoms $I_{a_0 0}$ and $I_{a_0 1}$ of generation 2 are the closed intervals obtained from $I_{a_0} $ after removing the gap $I^* _{a_0}$ of generation 1, at left and right of $I^* _{a_0}$ respectively.

By induction on $n\geq 1$,  for all $\underline {a}_n \in \{0,1\}^n$ let us construct the gaps $I^*_{\underline{a}_n}$ of generation $n$ and the atoms 
$I_{\underline{a}_n 0}$ and $I_{\underline{a}_n 1}$ of generation $n+1$:
$I^*_{\underline{a}_n}$ is the open interval contained in the atom $I_{\underline{a}_n }$ of generation $n$,   centered at the centre of     
$I_{\underline{a}_n }$ and with length $\alpha_n/2^n$.  The atoms $I_{\underline{a}_n 0}$ and $I_{\underline{a}_n 1}$ of generation $n+1$ are the closed intervals obtained from $I_{\underline{a}_n }$ after removing the gap $I^*_{\underline{a}_n}$, at left and right of $I^*_{\underline{a}_n}$ respectively (see Figure \ref{Figure3}).

\begin{figure}[ht]
	\centering
	\hspace{-0.2in}\vspace{0.2in}\includegraphics[scale=.40]{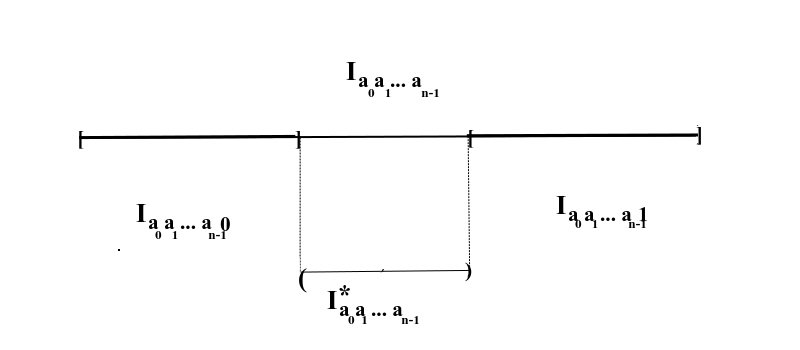}
	\caption{The atom of generation $n$ named $I_{\underline a_n}$ where $\underline a _n = a_0 a_1 \ldots. a_{n-1} \in \{0,1\}^n$. Inside it, there is a gap of generation $n$ named $I^*_{\underline a_n}$. Removing this gap from the atom  $I_{\underline a_n}$,  the two atoms of generation $n+1$ are obtained: $ I_{\underline a_n 0}  $  at left of the gap, and $ I_{\underline a_n 1}   $ at right. \label{Figure3}}
\end{figure}

We construct in what follows the Cantor set $K$ defined by Equality (\ref{equationCantorSet}), or equivalently by Equality (\ref{EqualityCantorSetComplementoDeGaps}).
From Equality (\ref{EqualityCantorSetComplementoDeGaps}) the Cantor set  $K$ is the complement in the interval $[-1,1]$ of the union of all the gaps. Then, 
$$ \mbox{Leb} (K) = \mbox{Leb} ([-1,1]) - \sum_{n=0}^{\infty} \sum_{\underline{a}_n \in \{0,1\}^n} \mbox{Leb} (I^*_{\underline{a}_n})$$ 
(recall the notation convention $\{0,1\}^0 := \{c\}$ ).
The gap  $I^*_c$ of generation 0 has lenght $2b_0$ and for all $n \geq 1$, the $2^n$ gaps of generation $n$ have all the same length $\alpha_n/2^n$. Then,
$$ \mbox{Leb} (K) = 2 -  2b_0 - \sum_{n=1}^{\infty} 2^n \frac{\alpha_n}{2^n} = 2(1-b_0) - \sum_{n=1}^{\infty} \alpha_n >0 $$
(recall the inequality at the left in (\ref{Equation200})).

\noindent {\bf Step 2. Construction of the map $G$ that makes $K$ dynamically defined by $G$.}

We need  that the map $G: I_0 \cup I_1 \to [-1,1]$ to be constructed satisfied Equalities (\ref{equalityGapI*0}), (\ref{equalityGapI*1}) and (\ref{equalityGapI*a_n}). Then, we will construct $G$ on each gap of $K$ according to those equalities.

Let us start by the gaps of generation 1: For any $a_0 \in \{0,1\}$, construct $G|_{I^*_{a_0} }:  I^*_{a_0} \to I^*_c$ as any $C^1$ expanding, order preserving and surjective map, such that 
\begin{equation} \label{Equation201}
\lim_{x \rightarrow \partial {I^*_{a_0}}^  {\pm}  } G' (x) = 2 	
\end{equation}
(the notation $x \rightarrow \partial {I^*_{a_0}}^  {\pm}$ means
 $x$ going to the right extreme of the interval  $I^*_{a_0}$ by the left, and $x$ going to the left extreme by the right).

Such a construction is possible because $| I^*_c  |/ |I^*_{a_0}     | = 2b_0 / (\alpha_1/2)= 2 \cdot (2b_0)/\alpha_1 > 2 $
(recall inequality at the right in (\ref{Equation200})).

Now, let us construct $G$ in the gaps of generation $n+1$ for all $n\geq 1$: For any $\underline{a}_{n+1} = a_0 a_1 \ldots a_{n-1} a_{n} \in \{0,1\}^{n+1}$ construct $G|_{I^*_{\underline{a}_{n+1}}}:  I^*_{a_0 a_1 \ldots a_{n-1} a_{n}} \mapsto   I^* _  {a_1 \ldots a_{n-1} a_{n}} $ as any $C^1$ expanding, order preserving and surjective map, such that

\begin{equation} \label{Equation202}
\lim_{x \rightarrow \partial {I^*_{\underline{a}_{n+1}}}^  {\pm}  } G' (x) = 2, 	
\end{equation}

Such a construction is possible because $|  I^* _  {a_1 \ldots a_{n-1} a_{n}} |/   |   I^* _  {a_0 a_1 \ldots a_{n-1} a_{n}}      | = (\alpha_n/2^n) / (\alpha_{n+1}/2^{n+1})= 2 \cdot \alpha_n /\alpha_{n+1} > 2 $
(recall the second inequality at the left in (\ref{Equation200})).

Besides, when increasing $n$ from 1 to infinity, construct $G$ on each gap of generation $n+1$ such that , for some sequence $\epsilon_n \rightarrow 0$ with $n \rightarrow + \infty$ the following inequality holds
\begin{equation} \label{Equation203}
 |G'(x)-2| < \epsilon _n  \ \mbox{ for all } x\in {I^*_{\underline{a}_{n+1}}} \ \mbox {for all } \underline{a}_{n+1}  \in \{0,1\}^{n+1}.  	
\end{equation}

Such a condition is possible to obtain because, due to the equality in (\ref{Equation200}) we know that $\lim _{n\rightarrow +\infty} \alpha_{n+1}/\alpha_n =1 $.   So, $$\lim_{n\rightarrow +\infty}\frac{ | I^* _  {a_1 \ldots a_{n-1} a_{n}} |} {   |   I^* _  {a_0 a_1 \ldots a_{n-1} a_{n}}      |} = \lim_{n\rightarrow +\infty} 2 \cdot \frac{\alpha_n }{\alpha_{n+1} }= 2 $$

Once the map $G$ is defined in all the gaps of the Cantor set $K$ contained in $I_0\cup I_1$ according to the conditions (\ref{equalityGapI*0}), (\ref{equalityGapI*1}) and (\ref{equalityGapI*a_n}), we extend it continously to $K$ so we have $G$ defined continously in $I_0\cup I_1$ as in Figure \ref{Figure2}, being $C^1$expanding in the union of all the gaps. Due to Equalities (\ref{Equation201}) and (\ref{Equation202}), and to Inequality (\ref{Equation203}), the map $G$ is $C^1$ also in $K$, being expanding in  both  intervals $I_0$ and $I_1$ and $G'(x) =2$ for all $x \in K$.  

Since $G$ satisfies Equalities  (\ref{equalityGapI*0}), (\ref{equalityGapI*1}) and (\ref{equalityGapI*a_n}), and  the Cantor set $K$ constructed in the Step 1 satisfies Equalities (\ref{equationCantorSet}) and (\ref{EqualityCantorSetComplementoDeGaps}), $K$ also satisfies Equality (\ref{equation00}). So, $K$ is the dynamically defined Cantor set defined by $G$, as stated.

\noindent {\bf Assertion.}  \em The map $G$ constructed above is not $C^{1 + \alpha}$ for any $\alpha >0$. \em  

In fact, since $\mbox{Leb}(K) >0$, this assertion  is a restatement of the following proposition:

\begin{proposition} \label{PropositionLebK=0 if C1+Holder}
	Let $G: I_0 \cup I_1 \mapsto [-1,1]$ be a $C^{1 + \alpha}$-expanding map such that $G_0 = G|_{I_0} $ and $G_1 = G|_{I_1} $ are surjective on $[-1,1]$ as in Figure \ref{Figure2}. Then, the Cantor set dynamically defined by $G$ according to Definition \ref{DefinitionCantorSetDynDefined}, has zero Lebesgue measure.
\end{proposition}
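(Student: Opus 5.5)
The plan is the classical bounded distortion argument. Since $G_0$ and $G_1$ are $C^{1+\alpha}$ and $G'\geq \lambda^{-1}>1$ on $I_0\cup I_1$, the function $\log G'$ is $\alpha$-Hölder there. Indeed, $G'$ is bounded below by a positive constant, so $|\log G'(x)-\log G'(y)|\leq K' |x-y|^{\alpha}$ for $x,y$ in the same interval $I_{a}$.

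The first step is a uniform distortion bound on the atoms. Fix an atom $I_{\underline a_n}$ of generation $n$. For $0\leq j\leq n-1$, $G^j$ maps it diffeomorphically onto the atom $I_{a_j\ldots a_{n-1}}$ of generation $n-j$. That atom has length at most $\lambda^{n-j}\cdot 2$, as shown in Subsection \ref{SubsectionCantorSetsDynDefined}. Hence for $x,y\in I_{\underline a_n}$,
$$\left|\log (G^n)'(x)-\log (G^n)'(y)\right|\leq \sum_{j=0}^{n-1} K'\,|G^j x-G^j y|^{\alpha}\leq K' 2^{\alpha}\sum_{k\geq 1}\lambda^{k\alpha}=:\log D<\infty .$$
The constant $D$ is independent of $n$ and of the word $\underline a_n$. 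This is the only place where the Hölder hypothesis enters, and it is the key step.

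The second step uses this bound to show that each gap occupies a fixed proportion of its atom. The map $G^n$ sends $I_{\underline a_n}$ onto $[-1,1]$ and sends the gap $I^*_{\underline a_n}$ onto the central gap $I^*_c=(-b_0,b_0)$. This follows by iterating Equalities (\ref{equalityGapI*0}), (\ref{equalityGapI*1}) and (\ref{equalityGapI*a_n}). By the mean value theorem, the ratio of lengths is comparable to the ratio of their images:
$$\frac{|I^*_{\underline a_n}|}{|I_{\underline a_n}|}\geq \frac{1}{D}\,\frac{|I^*_c|}{2}=\frac{b_0}{D}=:c>0 .$$
Using (\ref{EqualityAtomDividedByGap}), the total length of the generation $n+1$ atoms inside $I_{\underline a_n}$ is then at most $(1-c)|I_{\underline a_n}|$.

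The final step sums over the atoms. Summing over all words, the total length $L_n$ of the atoms of generation $n$ satisfies $L_{n+1}\leq (1-c)L_n$, so $L_n\leq 2(1-c)^{n-1}\to 0$. By (\ref{equationCantorSet}), $\mbox{Leb}(K)\leq L_n$ for every $n$, and therefore $\mbox{Leb}(K)=0$.

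The main obstacle is the first step, and in particular verifying that the geometric contraction of atom lengths makes the distortion series converge uniformly. The rest is bookkeeping. This is also exactly where Bowen's example escapes the result: with only $C^1$ regularity the distortion over $n$ iterates can grow unboundedly, so the gap ratios can tend to zero.
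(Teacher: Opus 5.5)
Your proposal is correct and follows essentially the same route as the paper. Both arguments apply the mean value theorem to the atom $I_{\underline a_n}$ and its gap $I^*_{\underline a_n}$ (sent by $G^n$ onto $[-1,1]$ and $I^*_c$), bound the gap-to-atom ratio below by bounded distortion, and conclude from the geometric decay of the total length of the generation-$n$ atoms. The only difference is that you prove the Bounded Distortion Lemma yourself, from the H\"older continuity of $\log G'$ and the bound $|I_{a_j\ldots a_{n-1}}|\leq 2\lambda^{n-j}$, whereas the paper cites it from Palis--Takens.
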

Proposition  \ref{PropositionLebK=0 if C1+Holder} is a well known result of the ergodic theory of $C^{1 + \alpha}$ dynamical systems. 
For the sake of completeness, we include its proof here.

\begin{proof}
	For fixed $\underline{a}_n =a_0a_1 \ldots a_{n-1}  \in \{0,1\}^n, \ n\geq 1$, recall the construction of the atom $I_{\underline{a}_n}$ of the Cantor set $K$ given by equality (\ref{EquationConstruccionAtom}). 
	%Is is composed by all the points of $I_0 \cup I_1$ that shar the same finite word $\underline{a}_n = a_0 a_1 \ldots a_{n-1}$ of the itineraty from time 0 to time $n-1$.
	Since $G$ is $C^1$, let us compute the length (the Lebesgue measure) of the closed interval $I_{\underline{a}_n}$ by applying the Mean Value Theorem of the differential calculus.  We obtain
	$$ |[-1,1]|=(G ^n)'(x_{\underline{a}_n}) \cdot |I_{\underline{a}_n}| $$
	for some point  $x_{\underline{a}_n}\in I_{\underline{a}_n}$.
	
	Now, consider the gap  $I^*_{\underline{a}_n}\subset I_{\underline{a}_n}$  as in Equality (\ref{EqualityAtomDividedByGap}). Using Equalities (\ref{equalityGapI*0}),  (\ref{equalityGapI*1}) and (\ref{equalityGapI*a_n}), we obtain 
	$$ I^*_c = G^n (I^*_{\underline{a}_n})$$
	and, applying again the Mean Value Theorem, there exists some point $y_{\underline{a}_n}\in I^*_{\underline{a}_n} \subset I_{\underline{a}_n}$ such that 
		$$ | I^*_c|=(G ^n)'(y_{\underline{a}_n}) \cdot |I^*_{\underline{a}_n}| $$

We obtain
\begin{equation}
	\label{equation333}
		\frac{|I^*_{\underline{a}_n}|} {|I_{\underline{a}_n}|} =  \frac{(G ^n)'(x_{\underline{a}_n}) } {(G ^n)'(y_{\underline{a}_n}) } \cdot r_0 \ \mbox{where } 0< r_0 := \frac {|I^*_c|}{|[-1,1]|} <1. 
\end{equation}
	Note that the points $x_{\underline{a}_n} $ and $ y_{\underline{a}_n}$ have the same itinerary up to time $n$ because both belong to the same atom $I_{\underline{a}_n} $. Therefore we can apply the following well known lemma that holds for $C^{1+\alpha}$- expanding maps:
	
	\noindent {\bf Bounded Distortion Lemma.}
\em 
If $G$ is a $C^{1+\alpha}$ expanding map as in Figure \ref{Figure2}, then there exists a constant $c>0$ such that for all $n\geq 1$, if $x$ and $y$ have the same itinerary up to time $n$, the following inequality holds:
$$   e^{-c}<\frac{(G ^n)'(x) } {(G ^n)'(y) } < e^c$$ 
\em

\noindent The proof of the Bounded Distortion Lemma  can be found for instance in \cite[Thm 1, Chap 4, p.58]{PalisTakens}.
 
Substituting the inequality of the Bounded Distortion Lemma in Equality (\ref{equation333}) we deduce
	$$\frac{|I^*_{\underline{a}_n}|} {|I_{\underline{a}_n}|} > e^{-c} \cdot r_0. $$
	
	Then 
	\begin{equation}
		\label{equation301}
			\sum _{\underline{a}_n\in \{0,1\}^n } |I^*_{\underline{a}_n}|  >  e^{-c} \cdot r_0 \cdot \sum _{\underline{a}_n \in \{0,1\}^n } |I_{\underline{a}_n}|.
	\end{equation}

Denote 
$$A_n := \bigcup _{\underline{a}_n \in \{0,1\}^n } I_{\underline{a}_n}, \ \ U_n := \bigcup _{\underline{a}_n \in \{0,1\}^n } I^*_{\underline{a}_n}. $$
More precisely, $A_n$ is the union of all the atoms of generation $n$, and $U_n$ is the union of all the gaps of generation $n$, one gap inside each atom of the same generation.  From the construction of the Cantor set dynamically defined by $G$, we have
$$ A_{n+1}= A_n\setminus U_n,  \ \mbox{hence } m(A_{n+1}) = m(A_n) -m(U_n), $$
where $m$ denotes the Lebesgue measure.

The atoms of generation $n$ are pairwise disjoint, then the gaps of generation $n$ contained in them are also pairwise disjoint and we obtain the following result using inequality (\ref{equation301}):  $$m(U_n) =\sum _{\underline{a}_n\in \{0,1\}^n } |I^*_{\underline{a}_n}|  >  e^{-c} \cdot r_0 \cdot \sum _{\underline{a}_n \in \{0,1\}^n } |I_{\underline{a}_n}| =  e^{-c} \cdot r_0 \cdot  m(A_n).$$	 

Then
$$m(A_{n+1}) < m(A_n) (1- e^{-c} \cdot r_0) = \lambda \cdot m(A_n) \ \mbox{where } 0<\lambda := 1- e^{-c} \cdot r_0 <1.$$
Since the above inequality holds for all $n \geq 1$ we deduce
$$m(A_n) < \lambda^{n-1} m(A_1) \ \mbox{for all } n \geq 1.$$
Applying Equality (\ref{equationCantorSet}) and taking into account that $A_{n+1} \subset A_n$ for all $n \geq 1$ we conclude
$$m(K)= m\left(\bigcap _{n=1} ^{\infty} A_n\right) = \lim _{n \rightarrow + \infty} m(A_n) \leq \lim _{n \rightarrow + \infty} \lambda^{n-1} m(A_1) = 0,$$
completing the proof of Proposition \ref{PropositionLebK=0 if C1+Holder}.
\end{proof}

\subsection{First example on $S^1$.} \label{SubsectionFirstExampleOnCircle}

In this subsection we will construct a $C^1\setminus C^{1+ \alpha}$- expanding map on the circle $S^1$ that has a pseudophysical measure $\mu_K$, that is indeed physical, supported on a Cantor set $K\subset S^1$ with positive Lebesgue measure. The physical measure $\mu_K$ will be Bernoulli (hence, ergodic) and absolutely continuous with respect to the Lebesgue measure, but not equivalent to it (although $\mu_K$ is equivalent to the restriction of the Lebesgue measure to $K$). Applying Theorem \ref{MainTheorem}, $\mu_K$  will satisfy Pesin's Entropy Formula. 

The construction of this example is based on Subsections \ref{SubsectionCantorSetsDynDefined} and \ref{SubsectionBowen} and taken from the first part of Example 2.1 of  \cite{Araujo-Santos-2019}.  

Consider the construction in Subsection \ref{SubsectionBowen} of the $C^1\setminus C^{1+ \alpha}$ - expanding map $G: I_0 \cup I_1 \to [-1,1]$, as in Figure \ref{Figure2}, and the Cantor set dynamically defined by $G$ such that $m(K )>0$ (recall that $m$ denotes the Lebesgue measure).  Construct any order preserving, surjective and $C^1$- expanding map $G_c: \overline {I_c^*} \to [-1,1]  $ such that
$$\lim _{x \rightarrow -b_0^+} G'_c(x) =\lim _{x \rightarrow b_0^-} G'_c(x) =2.$$

Construct $f:[-1,1] \to [-1,1]$  a piecewise  $C^1\setminus C^{1+ \alpha}$-expanding, with three continuity pieces (index 3 of the expanding map in the circle), order preserving and surjective in each continuity piece (see Figure \ref{Figure1}), as follows:

$$ f(x)  :=
\begin{cases} 
    G_0 (x) = G|_{I_0} (x)  & \mbox{ if } x \in I_0 =[-1, -b_0]  \\
	  G_c (x)  & \mbox{ if } x \in I^*_c =( -b_0, b_0)  \\
	 G_1 (x) = G|_{I_1} (x)  & \mbox{ if } x \in I_1=[b_0,1] 
\end{cases} 
$$

As said at the beginning of this section $f$ induces a $C^1\setminus C^{1+ \alpha}$ expanding map on the circle $S^1$.

\subsubsection {The invariant probability measure.}
Define the probabiliy measure
$$\mu_K (B) := \frac{m(B \cap K)}{m(K)} \ \mbox{ for any Borel set } B \subset S^1.$$

So $\mu_K $ is supported on the Cantor set $K$ and besides $$   \mu_K \ll m $$
Nevertheless, $m$ is not absolutely continuous with respect to $ \mu_K$ because the Lebesgue measure of the gaps is positive but their $\mu_K$-measure is zero.

\begin{proposition} \label{proposition mu_K invariant}
The probability measure	$ \mu_K$ is $f$-invariant.
\end{proposition}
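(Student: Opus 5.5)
To prove that $\mu_K$ is $f$-invariant, the plan is to show that $\mu_K(f^{-1}(B)) = \mu_K(B)$ for every Borel set $B\subset S^1$. Because $\mu_K$ is supported on $K$ and normalized by $m(K)$, this is equivalent to
$$m(f^{-1}(B)\cap K) = m(B\cap K) \quad \mbox{for every Borel } B.$$
Two facts drive the argument. First, $f^{-1}(K)\cap K$ is the only part of $f^{-1}(B)$ that $\mu_K$ sees. Second, $f'=G'=2$ at every point of $K$, which was arranged in Step 2 of Bowen's construction.

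The first step is to establish that $K$ is forward invariant with respect to the two branches: $G_0(K\cap I_0)=K$ and $G_1(K\cap I_1)=K$. This follows from Equality (\ref{equation00}), or equivalently from the atom structure (\ref{EquationConstruccionAtom}). A point of $K\cap I_{a_0}$ has itinerary $a_0a_1\ldots$, and its image has itinerary $a_1a_2\ldots$. Surjectivity holds because every itinerary in $K$ is realized, and $G_{a_0}^{-1}$ maps atoms $I_{a_1\ldots a_{n-1}}$ onto atoms $I_{a_0a_1\ldots a_{n-1}}$.

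From this, $f^{-1}(B)\cap K$ decomposes as the disjoint union of $G_0^{-1}(B\cap K)$ and $G_1^{-1}(B\cap K)$. Points of $K$ lie in $I_0\cup I_1$, and $f(x)\in K$ whenever $x\in K$. The preimage of $B\cap K$ under the middle branch $G_c$ lies in the gap $I_c^*$, which is disjoint from $K$, so it does not contribute.

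The second step is a change of variables for each branch. Each $G_{a}$ is a $C^1$ diffeomorphism from $I_a$ onto $[-1,1]$, so
$$m(G_a^{-1}(E)) = \int_E \frac{1}{G_a'(G_a^{-1}(y))}\,dm(y).$$
Taking $E=B\cap K$, every point $G_a^{-1}(y)$ with $y\in K$ lies in $K$, where $G'=2$. This gives $m(G_a^{-1}(B\cap K)) = \tfrac12 m(B\cap K)$. Summing over $a\in\{0,1\}$ yields $m(f^{-1}(B)\cap K)=m(B\cap K)$, and dividing by $m(K)$ gives the invariance.

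The main obstacle is justifying that $G'=2$ on all of $K$, not merely at gap endpoints, and that the change of variables formula applies to $G_a$. Both are stated in the construction: $G$ is $C^1$ on $I_0\cup I_1$ with $G'(x)=2$ for $x\in K$, which follows from (\ref{Equation201})–(\ref{Equation203}) and continuity of $G'$. So I would cite that paragraph. I would also check that the symmetric centered choice of gaps does not matter here, since only the value of the derivative on $K$ is used, not any geometric proportions.
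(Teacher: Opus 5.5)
Your argument is correct, but it takes a different route from the paper's.

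\textbf{The paper's route.} The paper never uses derivatives on $K$. It checks invariance only on the generating family of atoms. By the symmetric Bowen construction, every atom of generation $n$ carries $\mu_K$-mass $2^{-n}$. Up to the $\mu_K$-null preimage lying in $I_c^*$, $f^{-1}(I_{\underline a_n})$ is the union of the two atoms $I_{0\underline a_n}$ and $I_{1\underline a_n}$ of generation $n+1$.

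\textbf{Your route.} You verify a transfer-operator identity instead. Each inverse branch $G_a^{-1}$ maps $K$ into $K$, and since $G'=2$ on $K$, it scales Lebesgue measure on $K$ by exactly $1/2$. This gives $m(f^{-1}(B)\cap K)=m(B\cap K)$ for every Borel $B$ at once, with no generating-family/uniqueness step.

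\textbf{What each buys.} Your version isolates the mechanism, $\sum_{G(x)=y}1/G'(x)=1$ on $K$, and would transfer to any construction where that identity holds. The paper's version is purely combinatorial and feeds directly into the Bernoulli conjugacy of the next proposition.

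\textbf{A caveat on your regularity input.} Conditions (\ref{Equation201})--(\ref{Equation203}) only control $G'$ on the gaps. They do not by themselves rule out $G$ carrying, on $K$, a Stieltjes mass different from $2\,m|_K$. That $G$ is genuinely differentiable at points of $K$ with $G'=2$ comes from a further fact: $G$ maps $K\cap I_{a_0\ldots a_{n-1}}$ onto $K\cap I_{a_1\ldots a_{n-1}}$, which has exactly twice the Lebesgue measure. That is the same equidistribution $m(K\cap I_{\underline a_n})=m(K)/2^n$ the paper uses directly.

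So citing the paper's assertion that $G$ is $C^1$ with $G'=2$ on $K$ is legitimate. However, your closing remark that the centered choice of gaps ``does not matter'' is misleading: that choice is precisely what makes the assertion you rely on true.
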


\begin{proof}
	Since the atoms of all generations intersected with the Cantor set $K$ generate the $\sigma$-algebra of $K$, to prove that $\mu_K$ is $f$-invariant it is enough to prove that
	$\mu_K(f^{-1} (A_{\underline{a}_n})) = \mu_K (A_{\underline{a}_n}) $ for all $ {\underline{a}_n} \in \{0,1\}^n$.
	By construction of the Cantor set $K $  dynamically defined by $G= f|_{I_0 \cup I_1}$, all the $2^n$ atoms of generation $n$ have the same Lebesgue measure. We conclude that $\mu_K (A_{\underline{a}_n}) = 1/2^n $. Also 
	$f^{-1} (A_{\underline{a}_n}) = G^{-1} (A_{a_0 a_1 \ldots a_{n-1}})  =  A_{0 a_0 a_1 \ldots a_{n-1}} \cup A_{1 a_0 a_1 \ldots a_{n-1}} $, which are two disjoint atoms of generation $n+1$. Since all the $2^{n+1}$ atoms of generation $n+1$  have the same Lebesgue measure, we obtain
	$$\mu_K(f^{-1} (A_{\underline{a}_n})) =  \mu_K(  A_{0 a_0 a_1 \ldots a_{n-1}}) +  \mu_K(  A_{1 a_0 a_1 \ldots a_{n-1}}) = \frac{ 1}{2^{n+1} } + \frac{ 1}{2^{n+1} }  = \frac{ 1}{2^{n} } = \mu_K (A_{\underline{a}_n} ),  $$ 
	as needed.
	\end{proof} 
	
	\subsubsection {The Bernoulli property.}
\begin{proposition} \label{proposition mu_K Bernoulli}
	The probability measure	$ \mu_K$ is  Bernoulli for $f$.  Precisely, it is equivalent, through a bimeasurable conjugation $h: K \mapsto 2^{\mathbb N} $, to the Bernoulli measure $\nu$ on the shift space $2^{\mathbb N}$ of all the sequences of 0's and 1's that assigns the same weight $1/2$ to each symbol 0 or 1.
\end{proposition}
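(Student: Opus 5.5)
The plan is to use the itinerary map as the conjugation. Define $h: K \to 2^{\mathbb N}$ by $h(x) := \underline{a}(x) = a_0 a_1 a_2 \ldots$, where $f^n(x) = G^n(x) \in I_{a_n}$; this is well defined because $K \subset I_0 \cup I_1$ and $G(K) \subset K$. By the construction in Subsection \ref{SubsectionCantorSetsDynDefined}, each point of $K$ is the unique point of $\bigcap_n I_{\underline{a}_n(x)}$, since the atoms have lengths at most $\lambda^n \to 0$. Conversely, every sequence $\underline{a} \in 2^{\mathbb N}$ determines a nested sequence of nonempty closed atoms. Their intersection is a single point of $K$ (by Equality (\ref{equationCantorSet})) with itinerary $\underline{a}$. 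Hence $h$ is a bijection.

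Next comes regularity of $h$. The preimage under $h$ of the cylinder $[a_0 \ldots a_{n-1}]$ is exactly $I_{\underline{a}_n} \cap K$. This set is closed, and it is also relatively open in $K$: it equals $K$ minus the union of the other, finitely many, disjoint closed atoms of generation $n$. Therefore $h$ is continuous. Since $K$ is compact and $2^{\mathbb N}$ is Hausdorff, $h$ is a homeomorphism, and in particular bimeasurable. The conjugation identity $h \circ f|_K = \sigma \circ h$, with $\sigma$ the shift, is immediate from the definition of itineraries, because the itinerary of $f(x)$ is the itinerary of $x$ with $a_0$ deleted.

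Finally, I would identify the measures by showing $h_*\mu_K = \nu$. Both are Borel probabilities on $2^{\mathbb N}$, and cylinders form a $\pi$-system generating the Borel $\sigma$-algebra, so it suffices to check equality on cylinders. For a cylinder we have $h_*\mu_K([\underline{a}_n]) = \mu_K(I_{\underline{a}_n}) = m(I_{\underline{a}_n} \cap K)/m(K)$. As already used in the proof of Proposition \ref{proposition mu_K invariant}, this equals $2^{-n} = \nu([\underline{a}_n])$.

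I expect this last equality to be the main obstacle, since it relies on all $2^n$ atoms of generation $n$ carrying the same $\mu_K$-mass. To justify it, I would use the symmetric Bowen construction. Each atom of generation $n$ is cut by a centered gap of the same length $\alpha_n/2^n$, so by induction all atoms of a given generation have equal length. Moreover, the sub-Cantor structure inside each atom of generation $n$ is a translate of that inside any other, because the gaps at every later generation are centered and of equal length. Hence $m(I_{\underline{a}_n} \cap K)$ is independent of $\underline{a}_n$, and it equals $m(K)/2^n$. Combined with the conjugation, this shows that $(f|_K, \mu_K)$ is isomorphic to $(\sigma, \nu)$, which is the claimed Bernoulli property.
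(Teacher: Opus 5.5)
Your proposal is correct and follows the paper's proof. Both use the itinerary map $h$ as the conjugation, prove $h\circ f|_K=\sigma\circ h$, and identify the measures on cylinders via $\mu_K(I_{\underline{a}_n})=2^{-n}=\nu(C_{\underline{a}_n})$. Your justification of the equal masses is in fact more careful than the paper's: the paper deduces $\mu_K(I_{\underline{a}_n})=2^{-n}$ only from the atoms of generation $n$ having equal length, but what is needed is that $m(I_{\underline{a}_n}\cap K)$ does not depend on $\underline{a}_n$, and this is exactly what the translated structure of Bowen's centered construction gives.
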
	
\begin{proof}
	Consider the shift space $2^{\mathbb N}$ of two symbols composed by all the sequences of 0's and 1's. Precisely,
	$$2^{\mathbb N} := \{\underline {a} = a_0 a_1 \ldots a_n \ldots: \ a_n \in \{0,1\} \ \mbox{for all } n\geq0 \}. $$
	Consider the shift map to the right:
	$ \sigma : 2^{\mathbb N} \to 2^{\mathbb N} $ defined by
	$$\sigma ( a_0 a_1 \ldots a_n \ldots  ) =  a_1 \ldots a_n \ldots $$
	and the $\sigma$-invariant Bernoulli measure  $\nu$ in $2^{\mathbb N}$ giving to each digit 0 or 1 the same weight $1/2$.
	
	Define $h: K \to 2^{\mathbb N} $, assigning to each point $x \in K$ its itinerary $a_0 a_1 \ldots a_n \ldots$, namely $f^n(x) \in I_{a_n}$ for all $n \geq 0$. As seen in Subsection \ref{SubsectionCantorSetsDynDefined} $h$ is invertible. Besides, from the construction of the atoms of $K$ when applying $G= f|_{I_0 \cup I_1} $ to an atom $I_{a_0 a_1 \ldots a_{n-1}}$ of generation $n\geq 2$, one obtains the atom $I_{ a_1 \ldots a_{n-1}} $ of generation $n-1$. So, 
	$$h \circ f(x) =     \sigma \circ h (x) \ \mbox { for all } x\in K, $$
 and $h$ is a measurable conjugation, with measurable inverse, between $f|_K$ and the shift $\sigma$.
	To prove that $\mu_K$ is Bernoulli, we will prove that
	\begin{equation}
		\label{equation309}
			\mu_K = h^* \ \nu \  \ ,
	\end{equation}
	where $h^*$ is the pull forward defined by $h^*\nu(B) = \nu (h(B)) $ for any Borel set $B \subset K $.
	
	Since the atoms of all generations, intersected with $K$, generate the Borel $\sigma$-algebra of $K$, it is enoug to prove Equality (\ref{equation309}) for $B= I_{\underline{a}_n}$ for any $\underline a _n \in \{0,1\}^n$ and any $n\geq 1$.
	
	On the one hand, as said at the beginning, the measure $\mu_K$ of any atom of generation $n$ is $1/2^n$.
	On the other hand, 
	$h(I_{\underline{a}_n}) $ is the cylinder  $ C_{\underline{a}_n} \subset 2^{\mathbb N}$, where ${\underline{a}_n}= a_0 a_1 \ldots a_{n-1}$ is fixed. Precisely, the cylinder $ C_{\underline{a}_n} $ is the set of all the sequences of 0's and 1's such that their first $n$ terms are fixed and equal to $ a_0 a_1 \ldots a_{n-1}$.
	
	So $h^* \nu (I_{\underline{a}_n})= \nu (h(I_{\underline{a}_n})) = \nu (C_{\underline{a}_n} ) = p(a_0) \cdot p(a_1) \cdot \ldots \cdot p(a_{n-1})$, where $p(a_i)$ is the weight of the symbol $a_i$ of the Bernoulli measure $\nu$. Since both symbols 0 and 1 are equally weighted by $1/2$ we conclude that $h^* \nu (I_{\underline{a}_n}) = 1/2^n = \mu_K(I_{\underline{a}_n})$, as stated.	
\end{proof}

\begin{proposition} \label{Proposition mu is Physical}
	The measure $\mu_K$ is physical.
\end{proposition}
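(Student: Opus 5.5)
Since $\mu_K$ is ergodic (it is Bernoulli by Proposition \ref{proposition mu_K Bernoulli}) and $f$-invariant (Proposition \ref{proposition mu_K invariant}), Birkhoff's ergodic theorem applied to a countable dense family $\{\phi_i\}$ of continuous functions gives a set $B\subset K$ with $\mu_K(B)=1$ such that $\sigma_n(x)\to \mu_K$ in the weak$^*$ topology for every $x\in B$. Thus $B\subset B(\mu_K)$, where $B(\mu_K)$ is the basin of Definition \ref{DefinitionStatisticalBasin}. The plan is to transfer this full $\mu_K$-measure statement into a positive Lebesgue-measure statement, using that $\mu_K$ is the normalized restriction of $m$ to $K$.

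The key step is the observation that $\mu_K(B)=m(B\cap K)/m(K)$ by definition of $\mu_K$. Hence $\mu_K(B)=1$ gives $m(B)=m(B\cap K)=m(K)>0$, where $m(K)>0$ comes from Bowen's construction in Step 1 of Subsection \ref{SubsectionBowen}. So $m(B(\mu_K))\ge m(K)>0$, and $\mu_K$ is physical by Definition \ref{DefinitionPhysicalMeasure}. I would also check the minor measurability point that $B(\mu_K)$ is Borel. This holds because it equals $\bigcap_i\{x:\frac1n\sum_{j<n}\phi_i(f^jx)\to\int\phi_i\,d\mu_K\}$, and each of these sets is Borel since $f$ is continuous. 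In any case it suffices that $B(\mu_K)$ contains the Borel set $B$.

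The only substantive ingredient is ergodicity, which I would derive from the Bernoulli property. The conjugation $h$ of Proposition \ref{proposition mu_K Bernoulli} carries $\mu_K$ to the $(\tfrac12,\tfrac12)$-Bernoulli measure on $2^{\mathbb N}$, which is mixing and hence ergodic, and ergodicity is preserved under bimeasurable conjugacy. This is the step I expect to need the most care, but it is standard: an invariant set $A\subset K$ maps to a $\sigma$-invariant set $h(A)$, which then has measure $0$ or $1$. Finally, Theorem \ref{MainTheorem} is not needed here; physicality follows directly, and then $\mu_K$ is in particular pseudo-physical.
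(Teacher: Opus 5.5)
Your proof is correct and follows the same route as the paper. Both arguments get ergodicity from the Bernoulli property, use Birkhoff to find a full $\mu_K$-measure subset of $B(\mu_K)$, and then pass to positive Lebesgue measure via $\mu_K \ll m$, which you make explicit through $\mu_K(B)=m(B\cap K)/m(K)$, giving $m(B(\mu_K))\ge m(K)>0$. Your additional details (the countable dense family in Birkhoff, Borel measurability of the basin, and ergodicity transferred through the conjugacy $h$) are sound and fill in steps the paper leaves implicit.
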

\begin{proof}
Recall Definition  \ref{DefinitionStatisticalBasin} of the basin of statistical attraction of a probability measure. Since $\mu_K$ is Bernoulli, it is ergodic and the sequence of empiric probabilities $\sigma_n(x)$ converges to $\mu_K$ for $\mu_K$-almost every point $x$. In other words, the basin $B(\mu_K)$ of statistical attraction of $\mu_K$ contains $\mu_K$-almost all the points. Since $\mu_K$ is absolutely continuous with respect of the Lebesgue measure $m$, we conclude that $m(B(\mu_K))  >0$ and applying Definition \ref{DefinitionPhysicalMeasure}, $\mu_K$ is physical.
\end{proof}

%There may exist other pseudo-physical measures in this example. Nevertheless, it can be proved (the proof is rather difficult) that, if the map $f$ is adequately chosen in the gaps of $K$ then
% the basin $B(\mu_K)$ of statistical attraction of $\mu_K$ covers Lebesgue almost all the points of the interval $[-1,1]$. If so, applying Theorem 1.5 of \cite{CatEnr-polaca}, we conclude that $\mu_K$ is the unique pseudo-physical measure. 

\subsection{Second example on $S^1$ }  \label{SubsectionSecondExampleOnCircle}

In this subsection we will construct a $C^1\setminus C^{1+ \alpha}$- expanding map on the circle $S^1$ that has finitely many pseudo-physical measures $\mu_{K_i},  i = 1, 2, \ldots m$, that are indeed physical, supported on pair disjoint  Cantor sets $K_i \subset S^1$ with positive Lebesgue measure. All these physical measures $\mu_{K_i}$ will be Bernoulli (hence, ergodic) and absolutely continuous with respect to the Lebesgue measure. Since they are physical, applying Theorem \ref{MainTheorem}, all these measures $\mu_{K_i}$  will satisfy Pesin's Entropy Formula. 

The construction of this example is based on the example of Subsection \ref{SubsectionFirstExampleOnCircle} and  taken from  Example 2.1 of  \cite{Araujo-Santos-2019}.
The method consists of modifying the map $f$ of the Example of Subsection \ref{SubsectionFirstExampleOnCircle} inside a gap of the Cantor set $K$ dynamically defined by $f$, by adding one more continuity piece in this gap. After that, we will construct inside this gap a new Cantor set with positive Lebesgue measure by applying again  Bowen's construction explained in Subsection \ref{SubsectionBowen}.  The method can be repeated finitely many times, choosing at any time, a different gap of the Cantor sets previously constructed. Nevertheless, this method can not be repeated infinitely many times because each time it is applied the number of continuity pieces in the interval $[-1,1]$ (the index of the induced map on the circle $S^1$)
increases.

To fix ideas, we will explain with details the construction of a second Cantor set with positive Lebesgue measure and a new continuity piece inside the gap $I^*_c =(-b_0, b_0)$ of Figure \ref{Figure1}.

Let $f_1$ be the expanding map on the circle of the example in Subsection \ref{SubsectionFirstExampleOnCircle} and denote by $K_1$ its dynamically defined Cantor set contained in the intervals $I_0 \cup I_1$ as in Figure \ref{Figure1}, being $I^*_c$ its gap of generation 0.   Choose two real numbers $c_1$ and $b_1$ such that
$$0<b_1<c_1<b_0<1.$$
Substitute $f_1|_{I^*_c}$, being $I_c^*$ the gap of generation 0, by any pair of $C^1$-expanding order preserving and surjective maps $g_0: [-b_0, 0] \to [-1,1] $ and $g_1 : [0, b_0] \to [-1,1] $ as in Figure \ref{Figure4}, such that 
$$g_0(-c_1) = - c_1, \ \ g_0(-b_1) = c_1, \ \ g_1(b_1) = -c_1, \ \ g_1(c_1) = c_1, $$
$$ \lim_{x \rightarrow -b_0^+} g'_0 (x) = \lim_{x \rightarrow b_0^-} g'_1(x) = 2,     \    \lim_{x \rightarrow 0^-} g'_0 (x) = \lim_{x \rightarrow 0 ^+} g'_1(x).  $$

Construct $g: [-1,1] \mapsto [-1,1]$ with four order preserving, surjective and expanding pieces, as follows (see Figure \ref{Figure4}):
$$ g(x)  :=
\begin{cases} 
f_1|_{I_0}(x) =	G_0 (x)  & \mbox{ if } x \in I_0 =[-1, -b_0]  \\
	g_0(x) & \mbox{ if } x \in( -b_0, 0] \\
		g_1(x) & \mbox{ if } x \in( 0, b_0) \\
f_1|_{I_1}(x) =	G_1 (x) & \mbox{ if } x \in I_1=[b_0,1] 
\end{cases} 
$$
We have that $-c_1 \in (-b_0, 0)$ is the fixed point of $g_0$ and  $c_1 \in (0, b_0)$ is the fixed point of $g_1$.  Observe that, in the interval $[-c_1, c_1]$ there are two disjoint closed subintervals $J_0$ and $J_1$ separated by an  open interval (a gap) $J^*_c$, defined by:
$$ J_0 := [-c_1, -b_1] =g_0^{-1} ([-c_1, c_1]) , \ \ \ J_1:=[b_1, c_1]=g_1^{-1} ([-c_1, c_1]), $$  $$ J_c^*:= (-b_1, b_1) = [-c_1, c_1]\setminus (J_0 \cup J_1).$$
As seen in Subsection \ref{SubsectionCantorSetsDynDefined}, the $C^1$-expanding maps $g_0|_{J_0}: J_0 \to [-c_1,c_1] $ and $g_1|_{J_1}: J_1 \to [-c_1,c_1]$ dynamically define a Cantor set contained in $J_0 \cup J_1$.

Now, substitute $g_0|_{J_0}$ and $g_1|_{J_1}$ by   $C^1\setminus C^{1+ \alpha}$- expanding, order preserving and surjective maps  $$ H_0 : J_0 \mapsto [-c_1, c_1], \ \ \ H_1 : J_1 \mapsto [-c_1, c_1]$$
respectively, such that the Cantor set $K_2$ dynamically defined by $H_0$  and $H_1$ in $J_0 \cup J_1$ has positive Lebesgue measure. Such maps $H_0$ and $H_1$ and the Cantor set $K_2$ are constructed applying Bowen's method explained in Subsection \ref{SubsectionBowen}, after replacing the interval $[-1,1]$ by $[-c_1, c_1]$, the atoms of generation 1, $I_0$ and $I_1$, by $J_0$ and $J_1$ respectively, and the gap $I^*_c= (-b_0, b_0)$ by the gap $J_c^*= (-b_1, b_1)$.
 
 \begin{figure}[ht]
 	\centering
 	\hspace{-0.2in}\vspace{0.2in}\includegraphics[scale=.40]{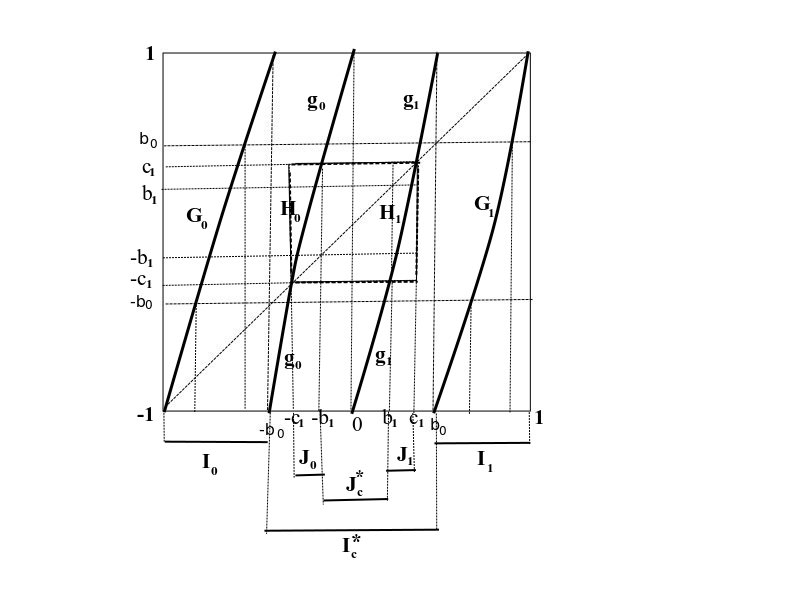}
 	\caption{The maps $G_0 = f|_{I_0}: I_0 \to [-1,1]$ and $G_1 = f|_{I_1}: I_1 \to [-1,1]$ dynamically define  the first Cantor set $K_1\subset I_0 \cup I_1$ with positive Lebesgue measure, whose atoms of generation 1 are $I_0$ and $I_1$. The maps   $H_0 = f|_{J_0}: J_0 \to [-c_1,c_1]$ and $H_1 = f|_{J_1}: J_1 \to [-c_1,c_1]$ dynamically define  the second Cantor set $K_2\subset J_0 \cup J_1$,  also with positive Lebesgue measure, whose atoms of generation 1 are $J_0$ and $J_1$. \label{Figure4}}
 \end{figure}

Finally, modify $g_0$ (we will still call it $g_0$) in a small left neighborhood of $-c_1$ and in small right  neighborhood of $-b_1$ so it $C^1$-glues well with $H_0$ at $-c_1$ and $-b_1$. Analogously modify $g_1$ (we will still call it $g_1$) in a small left neighborhood of $b_1$ and in small right neighborhood of $c_1$ so it $C^1$-glues well with $H_1$ at $b_1$ and $c_1$ (see Figure \ref{Figure4}).

Define
$$ f(x)  :=
\begin{cases} 
	f_1|_{I_0}(x) =	G_0 (x)  & \mbox{ if } x \in I_0 =[-1, -b_0]  \\
	f_1|_{I_1}(x) =	G_1 (x) & \mbox{ if } x \in I_1=[b_0,1]  \\
	H_0(x)  & \mbox{ if }   x \in J_0 =[-c_1, -b_1]     \\
	H_1(x)  & \mbox{ if }    x \in J_1 =[b_1, c_1]      \\
	g_0(x) & \mbox{ if } x \in( -b_0, -c_1) \cup (-b_1, 0] \\
	g_1(x) & \mbox{ if } x \in( 0, b_1) \cup (c_1, b_0)
\end{cases} 
$$
By construction $f$ dynamically defines two Cantor sets $K_1 \subset I_0 \cup I_1$ and $K_2 \subset J_0 \cup J_1$ with positive Lebesgue measures.

Construct the following two probability measures supported on $K_1$ and $K_2$, respectively: 
$$\mu_{K_1} (B) := \frac{m (B \cap K_1)}{m(K_1)}, \ \ 
\mu_{K_2} (B) := \frac{m (B \cap K_2)}{m(K_2)}, $$
for any Borel set  $B \subset [-1,1]$,
where $m$ denotes the Lebesgue measure.

By construction $ \mu_{K_1} $ and $ \mu_{K_2}$ are absolutely continuous with respect to the Lebesgue measure. As proved in Subsection \ref{SubsectionFirstExampleOnCircle}, $ \mu_{K_1} $ and $ \mu_{K_2} $ are $f$-invariant, Bernoulli and physical.  Besides, as proved in that subsection, the basins of statistical attractions 
$B( \mu_{K_1}  ) $ and $B( \mu_{K_2}  ) $ contain Lebegue almost all points of $K_1$ and $K_2$ respectively.

%According to the statistical behaviour of the orbits that wander forever inside the gaps, there may exist other pseudo-physical measures besides $ \mu_{K_1} $ and $ \mu_{K_2} $. Nevertheless, it can be proved (although the proof is rather difficult) that, if the map $f$ is adequately chosen in the gaps of $K_1$ and $K_2$, then  the union $B( \mu_{K_1}  ) \cup B( \mu_{K_2}  ) $ of the basins of statistical attractions of $\mu_{K_1}   $ and $ \mu_{K_2}   $ cover Lebesgue a.e the whole interval $[-1,1]$. So,  after applying Theorem   1.5  of \cite{CatEnr-polaca}, there does not exist other pseudo-physical measures different from $\mu_{K_1}   $ and $ \mu_{K_2}   $.
%
% 
%
%

\subsection{Example on $T^2$.} \label{SubsectionFirstExampleOnTorus}

In this subsection we will construct a $C^1\setminus C^{1+ \alpha}$- expanding map on the 2-torus $T^2$ that has  finitely many pseudophysical measures $\mu_{K_i},  i = 1, 2, \ldots k$, that are indeed physical, supported on pair disjoint  Cantor sets $K_i \subset T^2$ with positive Lebesgue measure. All these physical measures $\mu_{K_i}$ will be Bernoulli (hence, ergodic) and absolutely continuous with respect to the Lebesgue measure, but not equivalent to it.  Applying Theorem \ref{MainTheorem}, all these measures $\mu_{K_i}$  will satisfy Pesin's Entropy Formula.   

The construction is based on the examples of Subsections \ref{SubsectionFirstExampleOnCircle} and \ref{SubsectionSecondExampleOnCircle}.

Since $T^2 = S^1 \times S^1$ we will construct $f: T^2 \to T^2$ taking two maps $f_1, f_2: S^1 \to S^1$ and defining\begin{equation}
	\label{equation350}
	f(x.y) : = (f_1(x), f_2(y))  \mbox{ for all } (x,y) \in T^2
\end{equation}
Choose $f_1$ the expanding map in the circle constructed in the example of Subsection \ref{SubsectionFirstExampleOnCircle} and $f_2$ the expanding map in the circle constructed in the example of Subsection \ref{SubsectionSecondExampleOnCircle}.

\begin{proposition} \label{Proposition 5-8}
	The map $f$ defined by Equality (\ref{equation350}) is $C^1\setminus C^{1 + \alpha}$- expanding on the torus $T^2$.
\end{proposition}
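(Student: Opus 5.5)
The plan is to prove three things about $f=(f_1,f_2)$: that it is $C^1$, that it is uniformly expanding in the sense of Definition \ref{DefinitionExpandingMaps}, and that it is not $C^{1+\alpha}$ for any $\alpha>0$. Throughout, $T^2=S^1\times S^1$ carries the product (flat) Riemannian metric, so $T_{(x,y)}T^2=T_xS^1\oplus T_yS^1$ orthogonally and $\dist((x,y),(x',y'))\geq \max\{\dist(x,x'),\dist(y,y')\}$.

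\textbf{$C^1$ regularity and expansion.} Since $f_1$ and $f_2$ are $C^1$ on $S^1$, the map $f$ is $C^1$ on $T^2$, with diagonal derivative
$$Df_{(x,y)}=\begin{pmatrix} f_1'(x) & 0\\ 0 & f_2'(y)\end{pmatrix}.$$
Let $\lambda_1,\lambda_2>1$ be expansion constants of $f_1,f_2$, so that $|f_i'|\geq\lambda_i$ on $S^1$. By compactness these infima over $S^1$ are attained and are $>1$, which covers the gluing points where the derivatives were defined by lateral limits. Put $\lambda:=\min\{\lambda_1,\lambda_2\}>1$. For $v=(v_1,v_2)$,
$$\|Df_{(x,y)}v\|^2=f_1'(x)^2v_1^2+f_2'(y)^2v_2^2\geq\lambda^2\|v\|^2,$$
so $f$ is expanding.

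\textbf{Not $C^{1+\alpha}$.} Suppose for a contradiction that $\|Df_p-Df_q\|\leq K\dist(p,q)^\alpha$ for some $K>0$ and $\alpha>0$. Take $p=(x,y_0)$ and $q=(x',y_0)$ with $y_0$ fixed. Then $Df_p-Df_q=\mathrm{diag}(f_1'(x)-f_1'(x'),0)$, whose operator norm is $|f_1'(x)-f_1'(x')|$, while $\dist(p,q)=\dist(x,x')$. Hence $f_1$ would be $C^{1+\alpha}$ on $S^1$, and so its restriction $G=f_1|_{I_0\cup I_1}$ would be a $C^{1+\alpha}$ expanding map as in Figure \ref{Figure2}. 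By Proposition \ref{PropositionLebK=0 if C1+Holder}, the Cantor set $K$ dynamically defined by $G$ would then have zero Lebesgue measure. This contradicts Bowen's construction in Subsection \ref{SubsectionBowen}, which gives $\mbox{Leb}(K)=2(1-b_0)-\sum\alpha_n>0$. The same argument applied to $f_2$ and $K_2$ also works, but one factor suffices.

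\textbf{Main obstacle.} The only delicate point is whether $f_1$ and $f_2$ are genuinely $C^1$ on the circle, in particular at points of the Cantor sets. This was asserted in the construction: equalities (\ref{Equation201}), (\ref{Equation202}) and the uniform closeness (\ref{Equation203}) give $G'=2$ continuously on $K$. I would take it as established there, and check only that the gluings at $\pm b_0$ and $\pm1$ (and at $0,\pm b_1,\pm c_1$ for $f_2$) match lateral derivatives, which was imposed. The remaining steps are the routine computations above.
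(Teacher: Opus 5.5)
Your proposal is correct and follows the paper's proof. Expansion comes from the diagonal derivative with constant $\min\{\min f_1',\min f_2'\}$, and failure of $C^{1+\alpha}$ regularity is inherited from the factor $f_1$. The paper states this inheritance in one line; you spell it out by restricting to a horizontal circle $y=y_0$ and then applying Proposition \ref{PropositionLebK=0 if C1+Holder} against $\mbox{Leb}(K)>0$ from Bowen's construction, which is exactly how the paper shows $f_1\notin C^{1+\alpha}$.
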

\begin{proof}
	First, $f$ is $C^1\setminus C^{1 + \alpha}$ because $f_1$ and $f_2$ also are.
	Second, let us prove that $f$ is expanding.
	Recall Definition \ref{DefinitionExpandingMaps} of expanding  maps on manifolds. Take $(x,y)\in T^2$ and $v =(v_1, v_2) $ in the tangent space $ T _{(x,y)} T^2$.
	
	We have 
	$$\|Df_{(x,y)} v\| = \|Df_{(x,y)} (v_1, v_2)\| = \| f'_1(x) v_1, f'_2(y) v_2\| =$$ $$ \sqrt {(f'_1(x) v_1) ^2 + (f'_2(x) v_2) ^2}
	\geq \gamma \sqrt{v_1^2 + v_2^2} = \gamma \|v||,$$
	where $\gamma= \min \{\min_{x \in S^1} f'_1(x), \min_{y \in S^1} f'_2(y) \} $. As $\gamma >1$ because $f_1$ and $f_2$ are expanding maps on the circle, we conclude that $f$ is expanding on the torus, as stated.
\end{proof}

Denote by $m_{T^2}$ the Lebesgue measure on the torus, and $m_{S^1}$ the Lebesgue measure on the circle.
We have
\begin{equation}
	m_{T^2} = m_{S^1} \times  m_{S^1}, \ \ \ m_{T^2}(B) = m_{S^1}(B_1) \cdot  m_{S^1}(B_2)
	\label{equation881}
\end{equation}
for any measurable rectangle $B= B_1 \times B_2$ on the torus, where $B_1$ and $B_2$ are Borel sets in the circle.

Analogously, for two given finite measures $\mu_{ M_1}$ and $\mu_{ M_2}$ on the measurable spaces $M_1$ and $M_2$ respectively, the product measure  $\mu := \mu_{M_1}\times \mu_{M_2}$ on the product space $M_1 \times M_2$, satisfies

\begin{equation}
	\label{equation80b}
\mu (B) =\mu_{M_1}(B_1) \cdot  \mu_{M_2}(B_2)
\end{equation}
for any rectangle   $B= B_1 \times B_2$, where $B_1 \subset M_1$ and $B_2 \subset M_2$ are measurable sets.

On the one hand, from the construction of Subsection  \ref{SubsectionFirstExampleOnCircle}, the map $f_1$ on $S^1$  dynamically defines a Cantor set $K_{S^1}$ with positive Lebesgue measure, that is the support of an $f_1$-invariant measure $$\mu_{K_{S^1}} \ll m_{S^1}.$$ This measure is Bernoulli and physical for $f_1$.
	
On the other hand, from the construction of Subsection \ref{SubsectionSecondExampleOnCircle}, the map $f_2$ on $S^1$ dynamically defines $k \geq 1$ pairwise disjoint Cantor sets $K_{1, S^1},  K_{2, S^1}, \ldots,  K_{k, S^1}$, all with positive Lebesgue measure, that are the supports of $k$ different $f_2$-invariant measures
$$\mu_{K_{1, S^1}}, \mu_{K_{2, S^1}}, \ldots, \mu_{K_{k, S^1}} \ll m_{S^1}. $$ These measures are all Bernoulli and physical for $f_2$.

Construct on the torus the following $k$ pairwise disjoint Cantor sets:
$$K_{i, T^2}:= K_{S^1} \times K_{i, S^1} \ \mbox{for all } 1\leq i \leq k.$$
They all have positive Lebesgue measure on the torus. In fact, applying Equality (\ref{equation881}) we have
$$ m_{T^2} (K_{i, T^2}) = m_{S^1} (K_{S^1}) \cdot m_{S^1} (K_{i, S^1}) >0 \ \mbox{for all } 1\leq i \leq k.$$

From the constructions of Subsections \ref{SubsectionFirstExampleOnCircle} and \ref{SubsectionSecondExampleOnCircle} the following probabilities measures on $S^1$ are $f_1$ and $f_2$ invariant, respectively, Bernoulli and physical : 
$$\mu _{K_{S^1}} (B_1) :=  \frac{ m_{S^1} (K_{S^1} \cap B_1)}{ m_{S^1} (K_{S^1})} \ \ \forall \ \mbox{Borel set } B_1 \subset S^1, $$
$$\mu _{K_{i, S^1}} (B_2) :=  \frac{ m_{S^1} (K_{i, S^1} \cap B_2)}{ m_{S^1} (K_{i, S^1})} \ \ \forall \ \mbox{Borel set } B_2 \subset S^1 , \ \ \forall \ i\in \{1, 2, \ldots, k\}. $$

Construct on $T^2$ the following probability measures
$$\mu _{K_{i, T^2}} := \mu _{K_{S^1}} \times \mu _{K_{i, S^1}} \ \ \forall \ i\in\{1,2,\ldots, k\}. $$
It is immediate to check that $\mu _{K_{i, T^2}}$ is supported on the Cantor set $K_{i, T^2}$ and is $f$-invariant. In fact, it is enough to check that $\mu _{K_{i, T^2}}  (f^{-1} (B)) = \mu _{K_{i, T^2}} (B)$ for any measurable rectangle $B= B_1 \times B_2$. 

\begin{proposition} \label{Proposition 5-9} 
	For all $i \in \{1,2, \ldots, k\} $ and for all Borel set $B \subset T^2$ the following equality holds:
	$$ \mu _{K_{i, T^2}}(B) = \frac{ m_{T^2} (K_{i, T^2} \cap B)}{ m_{T^2} (K_{i, T^2})} .    $$
	Hence, 
		$$ \mu _{K_{i, T^2}} \ll  m_{T^2}.$$
	
\end{proposition}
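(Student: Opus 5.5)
The plan is to prove that both sides agree on measurable rectangles and then extend to all Borel sets by uniqueness of measures. Denote by $\lambda_i$ the right-hand side, $\lambda_i(B) := m_{T^2}(K_{i,T^2}\cap B)/m_{T^2}(K_{i,T^2})$. Since $m_{T^2}(K_{i,T^2})>0$ (computed just before the statement), $\lambda_i$ is a well defined Borel probability measure on $T^2$. The left-hand side $\mu_{K_{i,T^2}}$ is also a Borel probability measure, being the product of two probabilities on $S^1$.

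\textbf{Step 1 (rectangles).} Take $B=B_1\times B_2$ with $B_1,B_2\subset S^1$ Borel. Then $K_{i,T^2}\cap B=(K_{S^1}\cap B_1)\times(K_{i,S^1}\cap B_2)$. By Equality (\ref{equation881}) applied both to this rectangle and to $K_{i,T^2}$ itself,
$$\lambda_i(B)=\frac{m_{S^1}(K_{S^1}\cap B_1)\, m_{S^1}(K_{i,S^1}\cap B_2)}{m_{S^1}(K_{S^1})\, m_{S^1}(K_{i,S^1})}=\mu_{K_{S^1}}(B_1)\,\mu_{K_{i,S^1}}(B_2).$$
By Equality (\ref{equation80b}), the last expression equals $\mu_{K_{i,T^2}}(B)$.

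\textbf{Step 2 (extension).} The family of measurable rectangles is closed under finite intersections, i.e.\ it is a $\pi$-system. It generates the Borel $\sigma$-algebra of $T^2=S^1\times S^1$, because the circle is a separable metric space, so the product of the Borel $\sigma$-algebras is the Borel $\sigma$-algebra of the product. Two finite measures with equal total mass that agree on a generating $\pi$-system coincide, by Dynkin's $\pi$-$\lambda$ theorem or the Carathéodory uniqueness of extension. This yields the equality for every Borel $B$.

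\textbf{Step 3 (absolute continuity).} If $m_{T^2}(B)=0$, then $m_{T^2}(K_{i,T^2}\cap B)=0$, so $\mu_{K_{i,T^2}}(B)=0$ by the formula just proved.

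The only step requiring any care is Step 2, specifically identifying the product $\sigma$-algebra with the Borel $\sigma$-algebra of the torus. I would handle it by citing the standard fact for second countable spaces, noting that open rectangles with rational endpoints form a countable basis of $T^2$.
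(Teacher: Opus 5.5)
Your proof is correct and follows the paper's route: check the identity on measurable rectangles using Equalities (\ref{equation881}) and (\ref{equation80b}), then extend to all Borel sets. You also make explicit two points the paper leaves implicit: the $\pi$-system uniqueness argument behind the paper's ``it is enough to check the equality for rectangles'', and the one-line deduction of $\mu_{K_{i,T^2}} \ll m_{T^2}$.
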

\begin{proof}
	Since the measurable rectangles generate the Borel $\sigma$-algebra of $T^2$ it is enough to check the equality of the proposition for $B= B_1 \times B_2$, where $B_1, B_2 \subset S^1$ are Borel sets in the circle.
	In fact, applying Equalities \eqref{equation881} and  \eqref{equation80b} , we obtain:
	$$ \frac{ m_{T^2} (K_{i, T^2} \cap (B_1 \times B_2))}{ m_{T^2} (K_{i, T^2})} =
	 \frac{ m_{T^2} ((K_{S^1} \times K_{i, S^1} ) \cap (B_1 \times B_2))}{ m_{T^2} (K_{S^1} \times K_{i, S^1} )} =
		$$  
		$$\frac{ m_{T^2} ((K_{S^1} \cap B_1) \times (K_{i, S^1} \cap B_2))}{ m_{S^1} (K_{S^1}) \cdot   m_{S^1} (  K_{i, S^1} )}  =     \frac{ m_{S^1} (K_{ S^1} \cap B_1) \cdot m_{S^1} (K_{i, S^1}\cap B_2)}{ m_{S^1} (K_{S^1}) \cdot   m_{S^1} (  K_{i, S^1} )} =$$ 
	$$	\mu _{K_{S^1}} (B_1) \cdot \mu _{K_{i, S^1}} (B_2)  =   \mu _{K_{i, T^2}}(B_1 \times B_2).  $$
\end{proof}
\begin{proposition}

	For all $i\in \{1,2,\ldots\}$ the probability measure	$  \mu _{K_{i, T^2}}$ is  Bernoulli.  Precisely, it is equivalent to the Bernoulli measure $\nu$ on the shift space $4^{\mathbb N}$ of all the sequences of symbols $0,1,2$ or $3$ that assigns the same weight $1/4$ to each symbol.
\end{proposition}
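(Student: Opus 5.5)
We will build the conjugation as the product of the two one-dimensional conjugations already available. Let $h_1: K_{S^1} \to 2^{\mathbb N}$ be the itinerary map for $f_1$ from the proof of Proposition \ref{proposition mu_K Bernoulli}. Let $h_2: K_{i,S^1} \to 2^{\mathbb N}$ be the analogous itinerary map for $f_2$ on the Cantor set $K_{i,S^1}$; its atoms of generation $1$ are $I_0, I_1$ if $i=1$, or $J_0, J_1$ for the second Cantor set, and so on. We then identify $2^{\mathbb N} \times 2^{\mathbb N}$ with $4^{\mathbb N}$ via the bijection
$$\Theta\big((a_n)_n,(b_n)_n\big) := (2a_n + b_n)_n,$$
so the pair of symbols $(a_n,b_n)\in\{0,1\}^2$ is coded as one symbol in $\{0,1,2,3\}$. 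Finally we set $h := \Theta \circ (h_1 \times h_2): K_{i,T^2} \to 4^{\mathbb N}$.

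The key steps, in order, are as follows.

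\textbf{(a) $h$ is a bimeasurable bijection.} Both $h_1$ and $h_2$ are, by Subsection \ref{SubsectionCantorSetsDynDefined}. $\Theta$ is a homeomorphism, because a cylinder of length $n$ in $4^{\mathbb N}$ corresponds exactly to a product of two cylinders of length $n$.

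\textbf{(b) $h$ conjugates $f$ to the shift.} Since $f=f_1\times f_2$, and $h_1\circ f_1=\sigma\circ h_1$, $h_2\circ f_2=\sigma\circ h_2$, we get $h\circ f = \sigma_4 \circ h$ on $K_{i,T^2}$. Here we use that $\Theta$ conjugates $\sigma\times\sigma$ with the shift $\sigma_4$ on $4^{\mathbb N}$, which is immediate coordinatewise.

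\textbf{(c) $h$ sends $\mu_{K_{i,T^2}}$ to $\nu$.} Both measures are determined by their values on the generating $\pi$-system of rectangles of atoms $I_{\underline a_n}\times I'_{\underline b_n}$ of equal generation $n$, whose images are the length-$n$ cylinders of $4^{\mathbb N}$. By definition $\mu_{K_{i,T^2}}=\mu_{K_{S^1}}\times\mu_{K_{i,S^1}}$, so Equality (\ref{equation80b}) gives
$$\mu_{K_{i,T^2}}(I_{\underline a_n}\times I'_{\underline b_n}) = \mu_{K_{S^1}}(I_{\underline a_n})\,\mu_{K_{i,S^1}}(I'_{\underline b_n}) = 2^{-n}\cdot 2^{-n}=4^{-n}.$$
The factor $2^{-n}$ for each coordinate comes from the computation in the proofs of Propositions \ref{proposition mu_K invariant} and \ref{proposition mu_K Bernoulli}. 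On the other side, $\nu$ of the corresponding cylinder is $(1/4)^n$. Uniqueness of extension of measures on a $\pi$-system then gives $\mu_{K_{i,T^2}}=h^*\nu$.

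The main obstacle is the input to step (c) for $f_2$: we need all $2^n$ atoms of generation $n$ of each Cantor set $K_{i,S^1}$ to have equal $\mu_{K_{i,S^1}}$-measure $2^{-n}$. In the proof of Proposition \ref{proposition mu_K invariant} this came from Bowen's construction in Subsection \ref{SubsectionBowen}, where all atoms of a given generation are equal in length and the gaps are centered. For $K_{i,S^1}$ in Subsection \ref{SubsectionSecondExampleOnCircle} we will check that the same Bowen construction was used, transplanted to $[-c_1,c_1]$ with $J_0, J_1$. The symmetry then gives equal Lebesgue measure of $K_{i,S^1}$ inside each atom of a fixed generation, since each is determined by the same sequence $\alpha_n$ rescaled. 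Once this is verified, the rest of the argument is formal.
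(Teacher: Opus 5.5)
Your proposal is correct and follows essentially the same route as the paper. You build the same conjugacy $h$ (the product of the two itinerary maps, with each pair of symbols recoded as one of $0,1,2,3$), check $h\circ f=\sigma_{4^{\mathbb N}}\circ h$ coordinatewise, and verify $\mu_{K_{i,T^2}}=h^*\nu$ on rectangles. The only difference is in that last check. You test on rectangles of atoms, computing $4^{-n}$ directly, and you explicitly verify the equal-mass property of the atoms of $K_{i,S^1}$ that the paper takes over from the first example. The paper instead tests on arbitrary Borel rectangles, using $\mu_{K_{S^1}}=h_1^*\nu_{(1/2,1/2)}$, $\mu_{K_{i,S^1}}=h_{2,i}^*\nu_{(1/2,1/2)}$ and $\nu=\nu_{(1/2,1/2)}\times\nu_{(1/2,1/2)}$.
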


\begin{proof}
	From Proposition \ref{proposition mu_K Bernoulli} the measures 
$	\mu _{K_{S^1}}    $ and $\mu _{K_{i, S^1}}  $  on the circle are	
equivalent, through  bimeasurable conjugations $h_1: K_{ S^1}\mapsto 2^{\mathbb N} $ and $h_{2,i}: K_{i, S^1} \mapsto 2^{\mathbb N} $, to the Bernoulli measure $\nu_{(1/2,1/2)}$ on the shift space $2^{\mathbb N}$ of all the sequences of 0's and 1's that assigns the same weight $1/2$ to each symbol 0 or 1. Namely,
$$h_1 \circ f_1 (x) =     \sigma_{2^\mathbb N} \circ h_1 (x) \ \mbox { for all } x\in K_{S^1} \subset S^1, $$
$$h_{2,i}\circ f_2 (y) =     \sigma_{2^\mathbb N} \circ h_{2,i} (y) \ \mbox { for all } y\in K_{i, S^1} \subset S^1, $$
where $\sigma_{2^\mathbb N}$ is the shift to the right in the space $2^{\mathbb N}$, and from Equality \eqref{equation309}:
$$ \mu_{K_{S^1}} = h_1^* \ \nu_{(1/2,1/2)}, \ \ \   \mu_{K_{i, S^1}} = h_{2,i}^* \ \nu_{(1/2,1/2)}. $$
So
\begin{equation}
	\label{equation988b}
	\mu _{K_{i, T^2}} := \mu _{K_{S^1}} \times \mu _{K_{i, S^1}} = h_1^* \ \nu_{(1/2,1/2)} \times   h_{2,i}^* \ \nu_{(1/2,1/2)}  
\end{equation}

For any point $x \in {K_{S^1}}$ consider the sequence $h_1(x)= a_0, a_1, \ldots, a_n,\ldots \in 2^{\mathbb N}$ and denote   $(h_1(x))_n:=a_n$ for all $n \geq 0$. Analogously, for any point $y \in {K_{i, S^1}}$ consider the sequence $h_{2,i}(y)= b_0, b_1, \ldots, b_n,\ldots \in 2^{\mathbb N}$ and denote   $(h_{2,i}(y))_n := b_n$ for all $n \geq 0$.

Define $h: K_{i, T^2} \mapsto 4^{\mathbb N}$ as follows:
$$h(x,y) = c_0, c_1, \ldots, c_n, \ldots   \mbox {such that } c_n \in\{0,1,2,3\}  \mbox{ is }$$

the symbol 0 if $(h_1(x))_n \ (h_{2,i}(y))_n = 00    $, 

the symbol 1 if $(h_1(x))_n \ (h_{2,i}(y))_n = 01    $, 

the symbol 2 if $(h_1(x))_n \ (h_{2,i}(y))_n =10    $,  

the symbol 3 if $(h_1(x))_n \ (h_{2,i}(y))_n = 11    $.

We denote such a correspondence as $(h(x,y))_n = (h_1(x))_n \ (h_{2,i}(y))_n$

So, we have the following equalities for  any point $(x,y)  \in K_{i, T^2} $
$$(h \circ f (x,y))_n = (h (f_1(x), f_2(y)))_n = (h_1(f_1(x))_n \ (h_{2,i}(f(y))_n  = $$
$$ (\sigma_{2^{\mathbb N}}(h_1 (x)))_n \    (\sigma_{2^{\mathbb N}}(h_{2,i} (y)))_n =
	\sigma_{4^{\mathbb N}}((h_1(x))_n \  (h_{2,i} (y))_n     ) = \sigma_{4^{\mathbb N}} ((h(x,y))_n ),
	$$
where $ 	\sigma_{4^{\mathbb N} } $ is the shift to the right in the space $4^{\mathbb N}$. We have proved that
$$ h \circ f = 	\sigma_{4^{\mathbb N} } \circ h,$$
or, in other words, $f$ and the shift are conjugated by $h$. Besides $h$ is invertible because $h_1$ and $h_{2,i}$ also are.

Now, to end the proof we need to check that
\begin{equation}
	\label{equationToBeProved988}
	\mu_{K_{i, T^2}} = h^* \nu, 
\end{equation} 
where $\nu$ is the Bernoulli measure on the shift space $4^{\mathbb N}$ that assigns to each symbol the weight $1/4$.

Since the rectangles in $T^2= S^1 \times S^1$ generate the Borel  $\sigma$-algebra of $T^2$, let us check the above equality for any $B = B_1 \times B_2 \subset T^2$, where $B_1$ and $B_2$ are Borel sets in $S^1$.
In fact
$$ h^* \nu  (B_1 \times B_2) = \nu ( h(B_1 \times B_2)) = \nu (h_1(B_1)  \times h_{2,i} (B_2)) $$
In the shift spaces, the  Bernoulli measure $\nu $ in $4 ^{\mathbb N}=2 ^{\mathbb N} \times 2 ^{\mathbb N} $, which assigns to each of the four symbols the same weight $1/4$, is the product measure $\nu_{(1/2,1/2)} \times \nu_{(1/2,1/2)} $. 

Therefore, taking into account that $h_1(B_1)  \times h_{2,i} (B_2)$ is a rectangle in $4^{\mathbb N}$, and applying Equality \eqref{equation988b}, we obtain 
$$h^* \nu  (B_1 \times B_2) = \nu (h_1(B_1)  \times h_{2,i} (B_2))  =  \nu_{(1/2,1/2)}(h_1(B_1) )\ \cdot \  \nu_{(1/2,1/2)}( h_{2,i} (B_2)) =$$ $$ 
h_1^* \nu_{(1/2,1/2)}(B_1) \cdot   h_{2,i}^* \nu_{(1/2,1/2)} (B_2) =$$ $$ (h_1^* \nu_{(1/2,1/2)}\times  h_{2,i}^* \nu_{(1/2,1/2)}) (B_1 \times B_2)  =  \mu_{K_{i, T^2}} (B_1 \times B_2), $$
ending the proof of Equality \eqref{equationToBeProved988}).
\end{proof}
\begin{proposition} 
	For all $i \in \{1,2,\ldots, k\}$, the probability measure  $\mu_{K_{i, T^2}}$ is physical.
\end{proposition}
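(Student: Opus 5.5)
I will follow the argument of Proposition \ref{Proposition mu is Physical}: first show that $\mu_{K_{i, T^2}}$ is ergodic for $f$, then apply Birkhoff's ergodic theorem together with absolute continuity with respect to $m_{T^2}$.

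\emph{Step 1 (ergodicity).} By the preceding proposition, $h: K_{i,T^2} \to 4^{\mathbb N}$ is a bimeasurable conjugation with $h \circ f = \sigma_{4^{\mathbb N}} \circ h$ and $\mu_{K_{i,T^2}} = h^*\nu$, where $\nu$ is the Bernoulli measure with weights $1/4$. This Bernoulli measure is mixing, hence ergodic, for the shift. Ergodicity transfers through the conjugation. Indeed, if $B \subset K_{i,T^2}$ satisfies $f^{-1}(B) = B$ (mod $\mu_{K_{i,T^2}}$), then $h(B)$ is $\sigma_{4^{\mathbb N}}$-invariant (mod $\nu$). Hence $\nu(h(B)) \in \{0,1\}$, and so $\mu_{K_{i,T^2}}(B) = \nu(h(B)) \in \{0,1\}$.

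\emph{Step 2 (Birkhoff).} Since $\mu_{K_{i,T^2}}$ is $f$-invariant and ergodic, Birkhoff's theorem applies to each continuous $\phi$ in a countable dense family of $C^0(T^2,\mathbb R)$. It gives $\int \phi \, d\sigma_n(x) \to \int \phi \, d\mu_{K_{i,T^2}}$ for all $x$ off a $\mu_{K_{i,T^2}}$-null set. Taking the countable intersection of these full-measure sets, we get $\sigma_n(x) \to \mu_{K_{i,T^2}}$ in the weak$^*$ topology for $\mu_{K_{i,T^2}}$-a.e. $x$. Therefore $\mu_{K_{i,T^2}}(B(\mu_{K_{i,T^2}})) = 1$.

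\emph{Step 3 (positive Lebesgue measure of the basin).} By Proposition \ref{Proposition 5-9}, $\mu_{K_{i,T^2}}$ is the normalized restriction of $m_{T^2}$ to $K_{i,T^2}$. Then
$$m_{T^2}(B(\mu_{K_{i,T^2}})) \geq m_{T^2}(B(\mu_{K_{i,T^2}}) \cap K_{i,T^2}) = m_{T^2}(K_{i,T^2}) \cdot \mu_{K_{i,T^2}}(B(\mu_{K_{i,T^2}})) = m_{T^2}(K_{i,T^2}) > 0.$$
By Definition \ref{DefinitionPhysicalMeasure}, this shows that $\mu_{K_{i,T^2}}$ is physical.

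The only delicate point is Step 1. There one must check that $h$ is a genuine measure isomorphism (invertible and bimeasurable) between $(K_{i,T^2}, f, \mu_{K_{i,T^2}})$ and $(4^{\mathbb N}, \sigma_{4^{\mathbb N}}, \nu)$, so that invariant sets correspond. This follows from the invertibility of $h_1$ and $h_{2,i}$ established in Proposition \ref{proposition mu_K Bernoulli}. The remaining steps are routine.
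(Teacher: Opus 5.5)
Your proof is correct and follows the same route as the paper. The paper's proof just says to repeat Proposition \ref{Proposition mu is Physical}: ergodicity from the Bernoulli property, then Birkhoff's theorem for $\mu_{K_{i,T^2}}$-a.e.\ convergence of the empiric measures, then absolute continuity (Proposition \ref{Proposition 5-9}) to get a basin of positive Lebesgue measure. You carry out those same steps, only spelling out more explicitly how ergodicity transfers through $h$ and why a countable dense family of continuous functions suffices.
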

\begin{proof}
	Repeat the proof of Proposition \ref{Proposition mu is Physical} using the measure $\mu_{K_{i, T^2}}$ on the torus  instead of  the measure $\mu_K$ on the circle.	
\end{proof}

%%%%%%%%%%%%%%%%%%%%%%%%%%%%%

%%%%% %%%%%%%%%%%%%%%%%%%%%%%%

\end{document}